\newcommand{\bbN}{{\mathbb{N}}}
\newcommand{\bbR}{{\mathbb{R}}}
\newcommand{\bbZ}{{\mathbb{Z}}}
\newcommand{\bbC}{{\mathbb{C}}}
\newcommand{\calD}{{\mathcal D}}
\newcommand{\ip}[2]{ \left \langle #1 , #2 \right \rangle }
\newcommand{\no}{\nonumber}
\newcommand{\ti}{\tilde  }
\newcommand{\wti}{\widetilde  }
\newcommand{\beq}{\begin{equation}}
\newcommand{\eeq}{\end{equation}}
\newcommand{\ba}{\begin{align}}
\newcommand{\ea}{\end{align}}
\def\Im{{\rm Im}\,}
\numberwithin{equation}{section}
\newtheorem{theorem}{Theorem}[section]
\newtheorem{proposition}[theorem]{Proposition}
\newtheorem{lemma}[theorem]{Lemma}
\newtheorem{corollary}[theorem]{Corollary}
\theoremstyle{definition}
\newtheorem{definition}[theorem]{Definition}
\theoremstyle{remark}
\newtheorem*{remark*}{Remark}
\title{Eigenvalue Spacings and Dynamical Upper Bounds for Discrete One-Dimensional Schr\"odinger Operators}
\author{Jonathan Breuer, Yoram Last, and Yosef Strauss \\
\footnotesize Institute of Mathematics,  The Hebrew University of Jerusalem, Givat Ram, \\
\footnotesize 91904 Jerusalem, Israel. \\
\footnotesize Email: jbreuer@math.huji.ac.il, ylast@math.huji.ac.il, ystrauss@cs.bgu.ac.il}
\date{October 26, 2009} 
\begin{document}
\sloppy
\maketitle
\begin{abstract}
We prove dynamical upper bounds for discrete one-dimensional Schr\"odinger operators in terms of 
various spacing properties of the eigenvalues of finite volume approximations. We demonstrate the 
applicability of our approach by a study of the Fibonacci Hamiltonian.
\end{abstract}


\section{Introduction}

Let $H=\Delta+V$ be a discrete, one-dimensional, bounded
Schr\"odinger operator on $\ell^2(\bbZ)$ or $\ell^2(\bbN)$: \beq
\label{schrodinger} \left( H\psi \right)_n=\psi_{n+1}+\psi_{n-1}+V_n
\psi_n , \eeq where $V_n$ is a bounded real valued function on $\bbZ$
or $\bbN$ and in the case of $\bbN$, $\left( H\psi
\right)_1=\psi_2+V_1 \psi_1$. We are interested here in the unitary
time evolution, $e^{-itH}$, generated by $H$.
A wave packet, $\psi(t) = e^{-itH}\psi$, which is
initially localized in space, tends to spread out in time. Connections between
the rate of this spreading and spectral properties of $H$ have
been the subject of extensive research in the last three decades.
However, while various lower bounds have been obtained in many
interesting cases using an assortment of different approaches, the
subject of upper bounds is significantly less well understood.

The purpose of this work is to formulate general dynamical upper
bounds in terms of purely spectral information. However, rather than
consider spectral measures of $H$, the infinite volume object, we
shall consider finite volume approximations to $H$.  Among the
properties we consider, a central role will be played by the spacing
of eigenvalues of these finite volume approximations. In particular,
we shall show that the rate of spreading of a wave packet can be
bounded from above by the strength of eigenvalue clustering on
finite scales.

The spreading rate of a wave packet may be measured in various
different ways (for a survey of several of these see \cite{L}).
As is often done in this line of research, we shall
focus in this work on the average portion of the tail of the wave
packet that is outside a box of size $q$ after time $T$, \beq \no
P_{\psi}(q,T)=\sum_{|n|>q} \frac{2}{T}\int_0^\infty \left|
\ip{\delta_n}{\psi(t)} \right|^2 e^{-2t/T}dt , \eeq where
$\psi(t)=e^{-itH}\psi$ and $\ip{\cdot}{\cdot}$ is the inner
product. Moreover, as $\psi =\delta_1$ is an ideal candidate for a
wave packet that is localized at the origin, we shall further
restrict our attention to that case and denote \beq \no
P(q,T)\equiv P_{\delta_1}(q,T)= \sum_{|n|>q}\frac{2}{T}\int_0^\infty
\left| \ip{\delta_n}{e^{-itH}\delta_1} \right|^2 e^{-2t/T}dt. \eeq

Another important measure of the spreading rate of $\psi(t)$ is the
rate of growth of moments of the position operator: \beq \no \left
\langle \left\langle \left|X\right|^m \right \rangle \right
\rangle_T = \sum_{n} \frac{2}{T}\int_0^\infty  |n|^m \left|
\ip{\delta_n}{\psi(t)} \right|^2 e^{-2t/T} dt ,\eeq where $m > 0$.

These quantities are related. In particular, since for any
bounded $V$ the ballistic upper bound, $\left \langle \left\langle
\left|X\right|^m \right \rangle \right \rangle_T \leq C_m T^m$,
holds, it follows from known results (see \cite{GKT}) that if $P(T^\alpha,
T)=O(T^{-k})$ as $T \rightarrow \infty$ for all $k$, then $ \left
\langle \left\langle \left|X\right|^m \right \rangle \right
\rangle_T \leq C_m T^{\alpha m}$ for all $m$.

As mentioned above, various \emph{lower bounds} for $P_\psi(q,T)$
and its asymptotics, using local properties of $\mu_\psi$, the
spectral measure of $\psi$, have been obtained. A basic result in
this area which is often called the Guarneri-Combes-Last
bound \cite{comb, gu, gu2, L} says that if
$\mu_\psi$ is not singular with respect to the $\alpha$-dimensional
Hausdorff measure, then $P_\psi(q,T)$, for $q \sim T^\alpha$,
is bounded away from zero for all $T$. Various bounds on other related properties of $\mu_\psi$ 
have also been shown to imply lower bounds on transport (see, e.g., \cite{BGT, gs, GSB}). 
Roughly speaking, a central idea in all these works says that a ``higher degree of continuity'' of the
spectral measure implies faster transport.

Another idea, not unrelated to the one above, is to use growth
properties of generalized eigenfunctions to get dynamical
bounds. There have been many works in this direction (e.g., \cite{DT,DT2,KKL,KL}), 
some of which also combine generalized eigenfunction properties with properties of 
the spectral measure mentioned above.
Especially relevant to this paper is \cite{DT} which shows that it is sufficient to
have ``nice'' behavior of generalized eigenfunctions at a single
energy in order to get lower bounds on the dynamics. 
The relevance of this result here is to Theorem
\ref{th-1.8} below, which assumes control of \emph{all}
eigenvalues of $H^q(\pi/2)$. The result of \cite{DT} quoted
above may provide a clue as to whether this restriction is necessary
in a certain sense or simply a side-effect of our proof.

While there are fairly many results concerning \emph{lower bounds} on $P_\psi(q,T)$,
there seem to be much fewer works concerning
general \emph{upper bounds} on it. If $\mu_\psi$ is a pure
point measure, it follows from a variant of Wiener's theorem \cite[Theorem XI.114]{reed-simon3}
that $\lim_{q\rightarrow \infty} \lim_{T \rightarrow
\infty}P_\psi(q,T)=0$, showing that the bulk of the wave packet cannot
spread to infinity. For the Anderson model (where $V_n$ is a
sequence of i.i.d.\ random variables), the tails of the wave packet have been shown to
remain exponentially small for all times (see, e.g., \cite{KS}), implying boundedness of
$\left \langle \left\langle \left|X\right|^2 \right \rangle \right
\rangle_T$. However, as shown in \cite{DJLS}, near-ballistic growth of
$\left \langle \left\langle \left|X\right|^2 \right \rangle \right\rangle_T$
is also possible for pure point spectrum. 
If $\mu_\psi$ is continuous, there are examples
\cite{B,KL} showing that one may have fast transport
(e.g., near-ballistic spreading of the bulk of the wave packet) even
for cases where $\mu_\psi$ is very singular. It is thus understood
that, for continuous measures, there can be no meaningful upper bounds
in terms of continuity properties of the spectral measure alone.
Moreover, as bounding the growth rate of quantities like
$\left \langle \left\langle \left|X\right|^2 \right \rangle \right\rangle_T$
from above involves control of the full wave packet
(while it is sufficient to control only a portion of the wave packet
to bound such growth rates from below), obtaining upper bounds on
$P(q,T)$ that would be ``good enough'' to yield meaningful bounds
on the growth rates of moments of the position operator
appears to be a significantly more difficult problem then obtaining
corresponding lower bounds.

The few cases where upper bounds have been obtained for singular
continuous measures include a work of Guarneri and Schulz-Baldes \cite{GS2},
who consider certain Jacobi matrices (discrete Schr\"odinger
operators with non constant hopping terms) with self-similar spectra
and formulate upper bounds in terms of parameters of some related
dynamical systems, a work by Killip, Kiselev and Last \cite{KKL},
who obtain a fairly general upper bound on the spreading rate
of some portion of the wave packet, and a recent work of Damanik and Tcheremchantsev
\cite{DT3} (to which we shall return later on), who obtain dynamical upper
bounds from properties of transfer matrices. Out of these, the last
mentioned work is the only one obtaining tight control
over the entire wave packet, thus providing
upper bounds on $P(q,T)$ that are ``good enough'' to yield meaningful
bounds on the growth rates of moments of the position operator.

As mentioned above, our aim in this paper is to derive general
dynamical upper bounds in terms of purely spectral information.
Moreover, our bounds achieve tight control of the entire wave packet,
thus yielding meaningful bounds on the growth rates of moments of
the position operator (like those of \cite{DT3}).
It is clear, from the examples cited above, that local
properties of the spectral measure alone are not sufficient for this
purpose. Thus, we shall consider the spectral properties of finite
volume approximations to $H$.

Before we describe our results, let us introduce some useful notions.
It has become customary to use certain exponents to measure the rates of growth of various parts of 
the wave packet. Following \cite{GKT} (also see \cite{DT3}), we define
\beq \label{eq-1.2}
\begin{split}
\alpha_l^+ &= \sup \left\{\alpha > 0 \mid \limsup_{T \rightarrow \infty} \frac{\log P(T^\alpha, T)}{\log T}=0 \right\} \\
\alpha_l^- &= \sup \left \{\alpha > 0 \mid \liminf_{T \rightarrow \infty} \frac{\log P(T^\alpha, T)}{\log T}=0 \right \} 
\end{split}
\eeq
and 
\beq \label{eq-1.3}
\begin{split}
\alpha_u^+ &= \sup \left\{\alpha > 0 \mid \limsup_{T \rightarrow \infty} \frac{\log P(T^\alpha, T)}{\log T}>-\infty  \right \} \\
\alpha_u^- &= \sup \left\{\alpha > 0 \mid \liminf_{T \rightarrow \infty} \frac{\log P(T^\alpha, T)}{\log T}>-\infty \right \}. 
\end{split}
\eeq
$\alpha_l^\pm$ are interpreted as the rates of propagation of the `slow' moving part of the wave packet, while 
$\alpha_u^\pm$ are the rates for the `fast' moving part. The following notion is useful for applications.
\begin{definition} \label{def-1.1}
We call a sequence of nonnegative numbers, $\{a_n\}_{n=1}^\infty$, \emph{exponentially growing} if 
$\sup_n \frac{a_{n+1}}{a_n}<\infty$ and $\inf_n \frac{a_{n+1}}{a_n}>1$. 
\end{definition}

We are finally ready to describe our results. We first discuss the results for $H$ on $\bbN$, since the whole
line case will be reduced to this case. For $q>1, \ q \in \bbN$, $k
\in [0,\pi]$, let $H^q(k)$ be the restriction of $H$ to
$\{1,\ldots,q \}$ with boundary conditions
$\psi(q+1)=e^{ik}\psi(1)$, namely, \beq \label{Truncation}
H^{q}(k)=\left(
\begin{array}{ccccc}
V_1    & 1 & 0      & \ldots      & e^{-ik} \\
1    & V_2 & 1    & \ddots     & 0 \\
0      & \ddots & \ddots    & \ddots    & \vdots \\
\vdots     & \ddots & \ddots    &\ddots     &1\\
e^{ik} & 0   & \ldots & 1 & V_q \\
\end{array} \right).
\eeq As is well known (see, e.g., \cite[Section 7]{teschl}), for any $k \in (0,\pi)$,
$H^{q}(k)$ has $q$ simple eigenvalues, $E_{q,1}(k)<E_{q,2}(k)<
\ldots<E_{q,q}(k)$. $E_{q,j}(k)$ are continuous monotone functions
of $k$ and, as $k$ varies over $[0,\pi]$, they trace out bands. An
idea that goes back to Edwards and Thouless \cite{ET} (also see \cite{T}) 
is to use the width of the
bands, $b_{q,j} \equiv |E_{q,j}(\pi)-E_{q,j}(0)|$
(which can be identified as a measure of what is often called
``Thouless energy'' in physics literature), 
as a measure of the system's sensitivity to a variation of boundary conditions.
Faster spreading of the wave packet is intuitively associated with a greater degree of
extendedness of the eigenstates of $H$ and thus with a greater
sensitivity to a change in $k$. Our first theorem is motivated by this
intuitive picture:

\begin{theorem}\label{Thouless-width}
For $1 \leq j \leq q$, let $b_{q,j} \equiv
|E_{q,j}(\pi)-E_{q,j}(0)|$. Then
$$P(q,T) \leq \frac{4e^2}{\left(\sqrt{5}+1 \right)^2} \left(1+2 \parallel V \parallel_\infty \right)^2 T^6 
(\sup_{1 \leq j \leq q} b_{q,j})^2.$$
\end{theorem}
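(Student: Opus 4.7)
The natural strategy is to compare the dynamics generated by $H$ to those of the periodic operator $H_{\mathrm{per}}$ on $\ell^2(\bbZ)$ obtained by periodically extending $V|_{\{1,\ldots,q\}}$ with period $q$. By Floquet's theorem, $H_{\mathrm{per}}$ is unitarily equivalent to $\int_{-\pi}^{\pi}^{\oplus} H^q(k)\,dk/(2\pi)$, so its Bloch bands are exactly the functions $k\mapsto E_{q,j}(k)$ with corresponding normalized eigenvectors $\phi_{q,j}(k)$. This produces the Floquet representation
\[\ip{\delta_{mq+r}}{e^{-itH_{\mathrm{per}}}\delta_1} = \frac{1}{2\pi}\int_{-\pi}^{\pi}e^{imk}\sum_{j=1}^{q}e^{-itE_{q,j}(k)}\,\overline{\phi_{q,j}(k,r)}\,\phi_{q,j}(k,1)\,dk\]
for $m\in\bbZ$ and $1\le r\le q$.

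I would then integrate by parts in $k$ three times. Each integration brings out a factor of $1/|m|$ and, when $d/dk$ hits the phase $e^{-itE_{q,j}(k)}$, a factor of $-it\,dE_{q,j}/dk$. The crucial inputs are the Hellmann--Feynman identity $dE_{q,j}/dk=\ip{\phi_{q,j}(k)}{(dH^q/dk)\phi_{q,j}(k)}$ together with the monotonicity of $E_{q,j}$ on $[0,\pi]$, which give
\[\int_0^\pi\left|\frac{dE_{q,j}(k)}{dk}\right|dk = b_{q,j},\qquad \sup_k\left|\frac{dE_{q,j}(k)}{dk}\right|\le \left\|\frac{dH^q}{dk}\right\|\le 1,\]
the last bound because only the corner entries $e^{\pm ik}$ of $H^q(k)$ depend on $k$. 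Absorbing the sum over $r$ via Parseval on each fiber (using orthonormality of $\{\phi_{q,j}(k)\}_j$) and summing over $|m|\ge 1$ via $\sum m^{-6}<\infty$, I expect to obtain a tail estimate of the form
\[\sum_{|n|>q}\left|\ip{\delta_n}{e^{-itH_{\mathrm{per}}}\delta_1}\right|^2\le C(1+2\|V\|_\infty)^2\,t^6\,(\sup_j b_{q,j})^2.\]
Integrating against $(2/T)e^{-2t/T}$ and using $\int_0^\infty t^6 e^{-2t/T}dt=720(T/2)^7$ then converts $t^6$ into the $T^6$ in the statement.

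Finally, to pass from $H_{\mathrm{per}}$ back to $H$ one uses that the two operators coincide on $\{1,\ldots,q\}$; the time-averaged matrix elements can be compared via the resolvent identity and Combes--Thomas decay of $(H-E-i/T)^{-1}$ at complex energy.

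\emph{Main obstacle.} The hardest step is the reduction from $H_{\mathrm{per}}$ to $H$, since the estimate must hold uniformly as $T\to\infty$ while the Combes--Thomas decay at $\Im z=1/T$ weakens in that limit. A cleaner alternative is to bypass $H_{\mathrm{per}}$ altogether and express $\ip{\delta_n}{e^{-itH}\delta_1}$ directly in terms of the spectral data of $H^q(k)$ through a Krein-type Green's function identity, so that only $V|_{\{1,\ldots,q\}}$ enters the argument (through $H^q(k)$) and no external operator needs to be introduced; the integration-by-parts and bandwidth bounds above would then apply verbatim.
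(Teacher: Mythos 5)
Your plan is a genuinely different route from the paper's (which never touches the propagator of $H_{\mathrm{per}}$ directly: it bounds $P(q,T)$ by $\sup_E|G(1,q;E+i/T)|^2$ via the resolvent identity, compares $G$ to $G^q_{\mathrm{per}}$ at the single entry $(1,q)$, and then bounds $|G^q_{\mathrm{per}}(1,q;z)|$ by $2/|\calD^q(z)|$, reducing everything to a lower bound on the discriminant near its zeros). As written, your proposal has two gaps that I think are fatal to it. First, in the triple integration by parts in $k$, the derivative does not only hit the phase $e^{-itE_{q,j}(k)}$: it also hits the Bloch vectors $\phi_{q,j}(k)$, and $d\phi_{q,j}/dk$ involves the reciprocals of the eigenvalue gaps $E_{q,j}(k)-E_{q,i}(k)$, for which there is no a priori lower bound (for the models of interest, e.g.\ Fibonacci, these gaps are exponentially small; bands can also touch at $k=0,\pi$, where individual eigenprojections need not even be differentiable). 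Your accounting keeps only the terms where all derivatives fall on the phase, so the estimate $\lesssim t^3 b_{q,j}$ per amplitude is not justified. Second, the reduction from $H_{\mathrm{per}}$ to $H$ cannot be done by Combes--Thomas: at $\Im z=1/T$ the decay rate is $O(1/T)$ over a distance of order $q$, which gives nothing precisely in the regime $T\gtrsim q$ where an upper bound is needed. The paper's substitute is the resolvent-identity/Cauchy--Schwarz comparison of $G(1,q;z)$ with $G^q_{\mathrm{per}}(1,q;z)$, which costs a multiplicative factor $(1+2\|V\|_\infty)/\varepsilon^2$; this single step is the source of both the $(1+2\|V\|_\infty)^2$ and four of the six powers of $T$ in the statement, and nothing in your plan produces it.

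A further, more minor point: the explicit constant $4e^2/(\sqrt{5}+1)^2$ comes from a specific two-sided estimate $e\,|\calD^q(E)/(E-\wti{E}_{q,j(E)})|\geq|(\calD^q)'(\wti{E}_{q,j(E)})|\geq(\sqrt{5}+1)/b_{q,j}$, proved by a golden-ratio quadratic inequality relating the two half-band widths to $2/|(\calD^q)'(\wti{E}_{q,j})|$. Your route has no analogue of this and would at best yield an unspecified constant; indeed your own time integral $\frac{2}{T}\int_0^\infty t^6e^{-2t/T}\,dt=\frac{45}{4}T^6$ already exceeds the constant in the theorem. Your closing suggestion to bypass $H_{\mathrm{per}}$ via a Green's-function identity for $H^q(k)$ is in spirit exactly what the paper does, but making it precise requires the discriminant machinery rather than the Floquet/stationary-phase computation you outline.
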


\begin{corollary} \label{cor-1.3}
Assume that there exist $\beta>3$ and a sequence, $\{q_\ell\}_{\ell=1}^\infty$, such that 
$\sup_{1 \leq j \leq q_\ell} b_{q_\ell,j} < q_\ell^{-\beta}$. Then $\alpha_l^- \leq 3/\beta$. 
If, moreover, the sequence $\{q_\ell\}_{\ell=1}^\infty$ above is exponentially growing, then $\alpha_l^+ \leq 3/\beta$.
\end{corollary}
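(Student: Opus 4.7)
\textbf{Proof plan for Corollary \ref{cor-1.3}.} The approach is to substitute the hypothesis on $\sup_j b_{q_\ell, j}$ directly into Theorem \ref{Thouless-width} and read off the asymptotic exponent. Writing $C$ for the constant appearing there, the hypothesis yields $P(q_\ell, T) \leq C\, T^6 q_\ell^{-2\beta}$. Balancing $T^6 q_\ell^{-2\beta}$ against $1$ along the curve $q_\ell = T^\alpha$ identifies the critical exponent as $\alpha = 3/\beta$: for $\alpha > 3/\beta$ the right-hand side decays polynomially in $T$, while for $\alpha < 3/\beta$ it is vacuous.

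For the first claim, fix any $\alpha > 3/\beta$ (one may assume $q_\ell \to \infty$, since otherwise the hypothesis has no asymptotic content) and set $T_\ell := q_\ell^{1/\alpha}$, so that $T_\ell^\alpha = q_\ell$ and $T_\ell \to \infty$. Theorem \ref{Thouless-width} gives
\[
P(T_\ell^\alpha, T_\ell) \;\leq\; C\, T_\ell^6\, q_\ell^{-2\beta} \;=\; C\, T_\ell^{\,6 - 2\alpha\beta},
\]
so $\liminf_{T \to \infty} \log P(T^\alpha, T)/\log T \leq 6 - 2\alpha\beta < 0$. Since $P(q,T) \leq 1$ for all $q,T$, this liminf is automatically $\leq 0$, so a strictly negative value excludes $\alpha$ from the set defining $\alpha_l^-$ in \eqref{eq-1.2}. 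Since $\alpha > 3/\beta$ was arbitrary, $\alpha_l^- \leq 3/\beta$.

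For the second claim one needs a bound uniformly in $T$, not just along the subsequence $\{T_\ell\}$; here is where the exponentially growing hypothesis enters. Set $M := \sup_\ell q_{\ell+1}/q_\ell < \infty$ (the other half of Definition \ref{def-1.1}, $\inf_\ell q_{\ell+1}/q_\ell > 1$, ensures $q_\ell \to \infty$). For each large $T$ pick the largest $\ell$ with $q_\ell \leq T^\alpha$; then $q_\ell > T^\alpha/M$. Since $P(q,T)$ is monotone decreasing in $q$,
\[
P(T^\alpha, T) \;\leq\; P(q_\ell, T) \;\leq\; C\, T^6 q_\ell^{-2\beta} \;\leq\; C M^{2\beta}\, T^{\,6 - 2\alpha\beta},
\]
so $\limsup_{T \to \infty} \log P(T^\alpha, T)/\log T \leq 6 - 2\alpha\beta < 0$, whence $\alpha_l^+ \leq 3/\beta$.

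This is essentially bookkeeping once Theorem \ref{Thouless-width} is in hand; there is no substantial obstacle. The one point worth naming is the role of the boundedness of $q_{\ell+1}/q_\ell$ in Definition \ref{def-1.1}: it is precisely what converts a bound holding along the sparse subsequence $\{q_\ell\}$ into one holding for every $T$, thereby upgrading the liminf statement of the first part into the limsup statement of the second.
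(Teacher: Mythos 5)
Your proposal is correct. For the bound on $\alpha_l^-$ it coincides with the paper's route: substitute the hypothesis into Theorem \ref{Thouless-width}, evaluate along $T_\ell=q_\ell^{1/\alpha}$, and observe $6-2\alpha\beta<0$ exactly when $\alpha>3/\beta$ (this is part 1 of the paper's Proposition \ref{prop-4.1}). For $\alpha_l^+$ you diverge from the paper in a genuine but harmless way. The paper funnels the corollary through the general-purpose Proposition \ref{prop-4.1}, whose hypothesis is only a bound of the form $P(q_\ell,T)\leq Cq_\ell^{-\varepsilon}$ for $T\leq q_\ell^{1/\alpha}$; since that bound cannot be applied at $q_\ell=T^\alpha$ itself, the paper must pass to exponents $\alpha'>\alpha$ and uses the Damanik--Tcheremchantsev device $q(T)=q_{\ell(T)+\lfloor\sqrt{\ell(T)}\rfloor}$ to interpolate. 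You instead exploit the fact that Theorem \ref{Thouless-width} is valid for \emph{all} $T$, so you can take the largest $q_\ell\leq T^\alpha$, use monotonicity of $P(\cdot,T)$, and absorb the gap $q_\ell>T^\alpha/M$ into the constant $M^{2\beta}$ via $\sup_\ell q_{\ell+1}/q_\ell=M<\infty$. This is more elementary and self-contained for this particular corollary; what the paper's formulation buys is reusability, since Proposition \ref{prop-4.1} is also invoked for Corollaries \ref{cor-1.6} and \ref{cor-1.10}, where the available estimates are genuinely restricted to $T\leq q_\ell^{1/\alpha}$. Your identification of where the exponential-growth hypothesis enters is exactly right.
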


The bands traced by $E_{q,j}(k)$ make up the spectrum of the whole
line operator with $q$-periodic potential, $V^{\rm{per}}$, defined
by $V^{\rm{per}}_{nq+j}=V_j$ ($1 \leq j \leq q$). 
An elementary argument shows that \cite[Theorem 1.4]{deift-simon} implies that 
$$
\sup_{1 \leq j \leq q}b_{q,j} \leq \frac{2\pi}{q} .
$$
Of course, we need stronger decay to get a meaningful bound.

The $\left(1+2
\parallel V \parallel_\infty \right)^2 T^6$ factor in Theorem~\ref{Thouless-width}
is partly due to the effective
approximation of $V$ by $V^{\rm{per}}$, which underlies our analysis. It can be replaced
by $T^2$ if $V$ happens to be periodic of period $q$ and it can be improved,
for some $q$'s, in cases where $V$ has good repetition properties that make it
close to being periodic for some scales.
As seen in Corollary \ref{cor-1.3}, this $O(T^6)$ factor implies that for
Theorem~\ref{Thouless-width} to be meaningful, $(\sup_j b_{q,j})^2$
(the squared maximal ``Thouless width'') must decay quite fast.
By considering additional information, one can do better. Let
$\widetilde{E}_{q,j}=E_{q,j}(\frac{\pi}{2})$. We will show that
control of the clustering properties of the $\wti{E}_{q,j}$ leads to
exponential bounds on $P(q,T)$.

\begin{definition} \label{clustering-definition}
We say that the set $\mathfrak{E}_q\equiv\{\widetilde{E}_{q,j}\}_{j=1}^q$
is {\it $(\varepsilon,\xi)$-clustered} if there exists a finite
collection $\{I_j\}_{j=1}^k$ $(k \leq q)$ of disjoint closed
intervals, each of size at most $\varepsilon$, such that
$\mathfrak{E} \subseteq \cup_{j=1}^k I_j$ and such that every $I_j$
contains at least $q^\xi$ points of the set $\{
\ti{E}_{q,j}\}_{j=1}^q$. When we want to be explicit about the cover, we say 
that $\mathfrak{E}_q$ is $(\varepsilon,\xi)$-clustered by $\{I_j\}_{j=1}^k$.
\end{definition}

\begin{theorem}\label{clustering}
Let $b_{q,j} \equiv
|E_{q,j}(\pi)-E_{q,j}(0)|$ for $1 \leq j \leq q$. For any $2/3< \xi
\leq 1$ and $0< \alpha <1$, there exist constants $\delta>0$ and
$q_0>0$ such that, if $q \geq q_0$ and the set $\mathfrak{E}_q \equiv \{ \widetilde{E}_{q,1},
\widetilde{E}_{q,2},\ldots,\widetilde{E}_{q,q} \}$ is
$(q^{-1/\alpha},\xi)$-clustered, then for $T \leq q^{1/\alpha}$, \beq
\label{cluster-bound} P(q,T) \leq 4e^2
\left(1+2\parallel V\parallel_\infty \right)^2 T^4 (\sup_{1\leq j \leq
q} b_{q,j})^2 e^{-C q^\delta}, \eeq where $C$ is some
universal constant. In particular, \beq \label{cluster-bound1}
P(q,q^{1/\alpha}) \leq 4e^2(1+2
\parallel V
\parallel_\infty)^2 q^{4/\alpha} (\sup_{1 \leq j \leq q} b_{q,j})^2 e^{-C q^{\delta}}.\eeq
\end{theorem}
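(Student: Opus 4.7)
The plan is to refine the proof of Theorem~\ref{Thouless-width} by keeping its periodic-approximation and bandwidth steps, and inserting a new polynomial-approximation step that converts the clustering of $\mathfrak{E}_q$ into an exponential gain $e^{-Cq^\delta}$.

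First I would extend $V\big|_{\{1,\ldots,q\}}$ to $V^{\mathrm{per}}$ on $\bbZ$ by $q$-periodicity and set $H^{\mathrm{per}}=\Delta+V^{\mathrm{per}}$; a Duhamel comparison, together with a Combes--Thomas/finite-speed-of-propagation estimate, controls the difference $e^{-itH}\delta_1-e^{-itH^{\mathrm{per}}}\delta_1$ on $\{|n|>q\}$ for $T\leq q^{1/\alpha}<q$ by $O((1+2\|V\|_\infty)T)$, producing the $(1+2\|V\|_\infty)^2 T^2$ prefactor and reducing the problem to $H^{\mathrm{per}}$. Next, after applying the Laplace--Plancherel identity and the Floquet/Bloch decomposition, $P(q,T)$ for $H^{\mathrm{per}}$ becomes an $L^2$-norm over energies $E+i/T$ of Green's function matrix elements, which rewrites as an integral over quasi-momentum $k$ summed over bands $j$ with denominators $E+i/T-E_{q,j}(k)$. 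Using $|E_{q,j}(k)-\wti E_{q,j}|\leq b_{q,j}$ to replace $E_{q,j}(k)$ by $\wti E_{q,j}$ collapses the $k$-integral to a sum over the point set $\mathfrak{E}_q$ and yields the $T^2(\sup_j b_{q,j})^2$ factor.

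The crucial new step is to exploit the clustering of $\mathfrak{E}_q$ into at most $q^{1-\xi}$ intervals $\{I_\ell\}$ of length $q^{-1/\alpha}$, each carrying at least $q^\xi$ of the $\wti E_{q,j}$. I would construct a polynomial $P_q$ of degree $\sim q^\xi$ bounded by $1$ on $[-2-\|V\|_\infty,\,2+\|V\|_\infty]$ and exponentially small on $\bigcup_\ell I_\ell$, via a Chebyshev/Remez-type construction tailored to the cover, then insert $P_q(H^q(\pi/2))$ into the sum over $\mathfrak{E}_q$. This produces the factor $e^{-Cq^\delta}$ with $\delta=\delta(\xi,\alpha)>0$; the condition $\xi>2/3$ (combined with $\alpha<1$) is precisely what guarantees that the polynomial degree $q^\xi$ beats the competing scales $q^{1-\xi}$ (cover size) and $q^{1/\alpha}$ (inverse cluster width) to leave a positive exponent $\delta$. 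Assembling with the previous step gives \eqref{cluster-bound}, and specializing $T=q^{1/\alpha}$ yields \eqref{cluster-bound1}.

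The main obstacle is the polynomial construction and the quantitative computation of $\delta$: $P_q$ must be simultaneously exponentially small on a union of many small intervals and uniformly bounded on an $O(1)$-sized spectrum, and the available degree is constrained by the number of eigenvalues inside each cluster. Balancing the Chebyshev-type decay against the cover complexity is the combinatorial heart of the argument, and is precisely where the hypothesis $2/3<\xi\leq 1$ enters.
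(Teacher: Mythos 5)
Your overall architecture does not match how the bound is actually obtained, and two of its steps would fail. First, the reduction to the periodic operator cannot be done by a Duhamel comparison of $e^{-itH}\delta_1$ with $e^{-itH^{\mathrm{per}}}\delta_1$: that comparison is \emph{additive}, with an error of order $\|V-V^{\mathrm{per}}\|_\infty\, t$ in norm, which completely swamps the exponentially small quantity $P(q,T)\le e^{-Cq^\delta}$ you are trying to prove. Worse, the target of that reduction, $P^{\mathrm{per}}(q,T)$, is \emph{not} small: $H^{\mathrm{per}}$ has purely absolutely continuous spectrum and ballistic transport, so for $T\le q^{1/\alpha}$ with $\alpha<1$ (i.e.\ $T\gg q$) the periodic wave packet has long since left the box and $P^{\mathrm{per}}(q,T)=O(1)$. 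No Floquet/band decomposition of the periodic dynamics can therefore produce the factor $e^{-Cq^\delta}$. The paper's route is different: from the Parseval identity and a resolvent (not Duhamel) decoupling at site $q$ one gets $P(q,T)\le\sup_E|G(1,q;E+i/T)|^2$ for the \emph{original} operator $H$, and the periodic operator enters only through a \emph{multiplicative} resolvent-identity comparison $|G(1,q;z)|\le(1+2\|V\|_\infty)T^2\,|G^q_{\mathrm{per}}(1,q;z)|$ — the error term there factors through $|G^q_{\mathrm{per}}(1,q;z)|$ itself, which is what makes the comparison usable.

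Second, your Chebyshev/Remez polynomial $P_q(H^q(\pi/2))$ is a construction in search of an insertion point: $H^q(\pi/2)$ is a finite $q\times q$ matrix, and you do not explain how a function of it enters the dynamics of the infinite operator $H$; as stated this step has no bridge to $P(q,T)$. The polynomial that actually does the work is already canonically present: the discriminant $\calD^q(z)=G^q_{\mathrm{per}}(1,q;z)+G^q_{\mathrm{per}}(1,q;z)^{-1}$, a monic degree-$q$ polynomial whose zeros are exactly $\mathfrak{E}_q$ and which satisfies $|G^q_{\mathrm{per}}(1,q;z)|\le 2/|\calD^q(z)|$. The clustering hypothesis is then converted into an \emph{exponential lower bound} on $|\calD^q(E+i\varepsilon)|$ for $\varepsilon\ge q^{-1/\alpha}$: writing $|\calD^q(E+i\varepsilon)|^2=\prod_j|E+i\varepsilon-\wti E_{q,j}|^2$, each cluster of $\ge q^\xi$ zeros within distance $\sim\varepsilon$ of $E$ contributes a factor like $(1+c)^{q^\xi}$ (or $9^{q^\xi}$ after shifting to a nearby point where $|\calD^q|=2$), while the zeros far from $E$ cost at worst $(1-O(\varepsilon^{1-\varphi}))^{2q}$; the condition $\xi>2/3$ is exactly what lets the gain beat this loss. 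You correctly identified that $\xi>2/3$ arises from balancing a $q^\xi$-fold gain against competing scales, but the object being bounded is the discriminant evaluated at $E+i/T$, not an auxiliary extremal polynomial, and the prefactor $(\sup_j b_{q,j})^2$ comes from the lower bound $|(\calD^q)'(\wti E_{q,j})|\ge(\sqrt5+1)/b_{q,j}$ (Lemma \ref{derivative-bound}), not from collapsing a quasi-momentum integral.
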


\begin{corollary}\label{cor-1.6}
Assume that there exist $2/3< \xi
\leq 1$, $0< \alpha <1$ and a sequence $\{q_\ell\}_{\ell=1}^\infty$, such that the set $\mathfrak{E}_{q_\ell}$ is
$(q_\ell^{-1/\alpha},\xi)$-clustered for all $\ell\in \bbN$. Then $\alpha_u^-\leq \alpha$. 
If, moreover, the sequence $\{q_\ell\}_{\ell=1}^\infty$ above is exponentially growing, then $\alpha_u^+ \leq \alpha$.  
\end{corollary}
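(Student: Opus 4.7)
The plan is to feed the hypothesis directly into Theorem~\ref{clustering} along the given subsequence $\{q_\ell\}$, combine with the universal bound $\sup_{j}b_{q,j}\leq 2\pi/q$ noted after Theorem~\ref{Thouless-width}, and use monotonicity of $q\mapsto P(q,T)$ (immediate from its definition as a sum over $|n|>q$) to transfer the bound at scale $q_\ell$ to a bound at scale $T^{\alpha'}$ for an arbitrary $\alpha'>\alpha$.

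Fix $\alpha'>\alpha$ and set $T_\ell:=q_\ell^{1/\alpha}$. Theorem~\ref{clustering} applied at $q=q_\ell$ with $T=T_\ell$, together with $(\sup_j b_{q_\ell,j})^2\leq 4\pi^2/q_\ell^2$, gives
$$
P(q_\ell,T_\ell)\;\leq\; C_1\,q_\ell^{4/\alpha-2}\,e^{-Cq_\ell^\delta},
$$
where $C_1$ depends only on $\|V\|_\infty$. Since $\alpha'/\alpha>1$, for $\ell$ large $T_\ell^{\alpha'}=q_\ell^{\alpha'/\alpha}\geq q_\ell$, so by monotonicity $P(T_\ell^{\alpha'},T_\ell)\leq P(q_\ell,T_\ell)$. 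The $e^{-Cq_\ell^\delta}$ factor dominates any power of $q_\ell$, and $\log T_\ell=\alpha^{-1}\log q_\ell$, so
$$
\frac{\log P(T_\ell^{\alpha'},T_\ell)}{\log T_\ell}\;\longrightarrow\;-\infty.
$$
This forces $\liminf_{T\to\infty}\log P(T^{\alpha'},T)/\log T=-\infty$, hence $\alpha_u^-\leq\alpha'$; letting $\alpha'\downarrow\alpha$ proves the first assertion.

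For the second assertion one needs a $\limsup$ statement, i.e.\ a bound valid for \emph{every} large $T$, not just along $T_\ell$. This is where exponential growth enters: set $M:=\sup_\ell q_{\ell+1}/q_\ell<\infty$. Each large $T$ lies in some $[q_{\ell-1}^{1/\alpha},q_\ell^{1/\alpha}]$, and the definition of exponential growth gives the sandwich $T^\alpha\leq q_\ell\leq MT^\alpha$. Since $T\leq q_\ell^{1/\alpha}$, Theorem~\ref{clustering} applies and yields, using $q_\ell\geq T^\alpha$,
$$
P(q_\ell,T)\;\leq\; C_1\,T^4 q_\ell^{-2}\,e^{-Cq_\ell^\delta}\;\leq\; C_1\,T^{4-2\alpha}\,e^{-CT^{\alpha\delta}}.
$$
For $T$ so large that $T^{\alpha'-\alpha}\geq M$ one has $T^{\alpha'}\geq MT^\alpha\geq q_\ell$, hence $P(T^{\alpha'},T)\leq P(q_\ell,T)$, and the same logarithmic computation yields $\limsup_{T\to\infty}\log P(T^{\alpha'},T)/\log T=-\infty$, i.e.\ $\alpha_u^+\leq\alpha'$; again $\alpha'\downarrow\alpha$ concludes.

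There is no genuine obstacle; the only subtle point is that Theorem~\ref{clustering} only supplies a bound in the regime $T\leq q^{1/\alpha}$, so passing from the sparse subsequence bound ($\liminf$) to a bound uniform in $T$ ($\limsup$) requires the two-sided comparison $T^\alpha\leq q_\ell\leq MT^\alpha$, which is precisely what the exponentially growing hypothesis supplies and which explains the dichotomy in the statement of the corollary.
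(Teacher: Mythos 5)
Your proof is correct and follows essentially the same route as the paper: the paper deduces the corollary from Theorem \ref{clustering} combined with Proposition \ref{prop-4.1}, whose proof is precisely your argument --- apply the exponential bound along $T_\ell=q_\ell^{1/\alpha}$ for the $\alpha_u^-$ statement, and use the exponential-growth sandwich $T^\alpha\leq q_\ell\leq M\,T^\alpha$ together with monotonicity of $P(\cdot,T)$ to handle every large $T$ for the $\alpha_u^+$ statement. The only cosmetic difference is that Proposition \ref{prop-4.1} uses the slightly more elaborate index choice $q_{\ell(T)+\lfloor\sqrt{\ell(T)}\rfloor}$ borrowed from \cite{DT3}, whereas you take the nearest scale above $T^\alpha$ directly, which suffices here.
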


Thus, we see that the spreading rate of a wave packet can be bounded
by the strength of eigenvalue clustering at appropriate length scales. The final
ingredient in our analysis comes from considering different length
scales at once. The local bound of Theorem \ref{clustering} can be
improved if the clusters at different scales form a structure with a
certain degree of self-similarity. We first need some definitions.
As above, for any $q \in \bbN$, we let
$\mathfrak{E}_{q}=\{\widetilde{E}_{q,j}\}_{j=1}^{q}$.

\begin{definition} \label{def-1.4}
We say that the sequence $\{\mathfrak{E}_{q_\ell}\}_{\ell=1}^\infty$ is
\emph{uniformly clustered} if $\mathfrak{E}_{q_\ell}$ is
$\{q_\ell^{-1/\alpha_\ell}, \xi_\ell \}$-clustered by
$U_\ell=\{I_j^\ell \}_{j=1}^{k_\ell}$ and the following hold:
\\
(i) If $\ell_1<\ell_2$ then $q_{\ell_1}^{-1/\alpha_{\ell_1}}>q_{\ell_2}^{-1/\alpha_{\ell_2}}$. \\
(ii) There exist $\mu \geq 1$ and a constant $C_1>0$ so that \beq
\no \inf_{1 \leq j \leq k_\ell} |I^\ell_j| \geq C_1
q_\ell^{-\mu/\alpha_\ell}.
\eeq \\
(iii) There exists a $\delta>0$ so that $ \delta < \xi_\ell
<(1-\delta)$
and $\delta< \alpha_\ell<1$ for all $\ell$. \\
(iv) Define ${\bar{\xi}_\ell} = \log\left(\sup_{1 \leq j \leq k_\ell}
\# \left( \mathfrak{E}_\ell\cap I_j^\ell \right)\right)/\log q_\ell $,
then there exists a constant $C_2$ so that \beq
\no \bar{\xi}_\ell- \xi_\ell \leq
\frac{C_2}{\log q_\ell}.\eeq \\
When we want to be explicit about the cover and the relevant
exponents, we say that
$\{\mathfrak{E}_{q_\ell}\}_{\ell=1}^\infty$ is uniformly clustered
by the sequence $\{U_\ell\}_{\ell=1}^\infty$ ($U_\ell=\{I_j^\ell\}_{j=1}^{k_\ell}$) with exponents
$\{\alpha_\ell, \xi_\ell, \mu \}$.
\end{definition}

\begin{definition} \label{def-1.5}
Let $\left\{ U_\ell=\{I_j^\ell\}_{j=1}^{k_\ell}
\right\}_{\ell=1}^\infty$ be a sequence of sets of intervals such
that $\varepsilon_\ell \geq |I_j^\ell| \geq C \varepsilon_\ell^\mu$ for a
monotonically decreasing sequence
$\{\varepsilon_\ell\}_{\ell=1}^\infty$ and constants $C>0$ and
$\mu \geq 1$. Let $0<\omega<1$. We say that
$\{U_\ell\}_{\ell=1}^\infty$ \emph{scales nicely with exponents
$\mu$ and $\omega$} if for any $1>\varepsilon>0$ there exists a set
of intervals,
$U_\varepsilon=\{I_j^\varepsilon\}_{j=1}^{k_\varepsilon}$, of length
at most $\varepsilon$ and no less than $C \varepsilon^\mu$, such
that $U_{\varepsilon_\ell}=U_\ell$ with the following properties: \\
(i) If $\varepsilon_1> \varepsilon_2$ then for any $1\leq j \leq
k_{\varepsilon_2}$ there exists an  $1\leq m \leq k_{\varepsilon_1}$
such that $I_j^{\varepsilon_2} \subseteq I_m^{\varepsilon_1}$. \\
(ii) There exists a constant $C_3>0$ such that if $\varepsilon_1>
\varepsilon_2$ then for any $1\leq m \leq k_{\varepsilon_1}$, $\#\{j
\mid I_j^{\varepsilon_2} \cap I_m^{\varepsilon_1} \neq \emptyset \}
\leq C_3 \left( \varepsilon_1/\varepsilon_2 \right)^\omega$.
\end{definition}

In simple words, $\{U_\ell\}_{\ell=1}^\infty$ scales nicely if the sequence may be extended to a `continuous' family 
(parameterized by the interval lengths) in such a way that intervals of one length scale are contained in and, 
to a certain extent, `nicely distributed' among the intervals of a larger length scale.

\begin{remark*}
We emphasize that, while the families $U_\ell$ in Definition \ref{def-1.4} are assumed to consist 
of \emph{disjoint} intervals, we do not assume this disjointness about the families $U_\varepsilon$ in Definition 
\ref{def-1.5}.
\end{remark*}

\begin{remark*}
An example of a nicely scaling sequence is given by the sequence
$\{U_l\}_{l=1}^\infty$ where $U_1= \left \{[0,1/3], [2/3,1]
\right\}$, $U_2=\left \{[0,1/9],\ [2/9,1/3],\ [2/3, 7/9],\ [8/9,1]
\right \}$ and so on ($U_l$ is the set of intervals obtained by
removing the middle thirds of the intervals comprising $U_{l-1}$).
It is not hard to see that this sequence scales nicely with
exponents $\mu=1$ and $\omega=\frac{\log 2}{\log 3}$ (setting $2$ as the value of $C_3$ in Definition \ref{def-1.5}).
\end{remark*}

\begin{theorem} \label{th-1.8}
Assume that $\{q_\ell\}_{\ell=1}^\infty$ is a sequence such that
$\{\mathfrak{E}_{q_\ell}\}_{\ell=1}^\infty$ is uniformly clustered
by $\left \{U_\ell=\{I_j^\ell\}_{j=1}^{k_\ell} \right \}_{\ell=1}^\infty$ with exponents
$\{\alpha_\ell, \xi_\ell, \mu \}$. Suppose, moreover, that
$\{U_\ell\}_{\ell=1}^\infty$ scales nicely with exponents $\mu$ and
$\omega$ for some $0<\omega<1$. Assume also that, for some $\zeta>0$,
\beq \label{eq-1.5} 2\omega \left(\frac{\mu-1}{\mu-\omega}
\right)+\zeta<\xi_\ell \alpha_\ell. \eeq

Then for any $m > 0$ there exists $C_m>0$ such that 
\beq \label{eq-1.6} 
P(q_\ell,T)\leq C_m q_\ell^{-m}
\eeq for any $\ell \in \bbN$ and $T \leq q_\ell^{1/\alpha_\ell}$.
\end{theorem}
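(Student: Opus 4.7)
My plan is to adapt the argument behind Theorem~\ref{clustering} to the multi-scale setting, replacing its single clustering scale by the hierarchy of scales provided by Definitions~\ref{def-1.4} and~\ref{def-1.5}. The hypothesis $\xi > 2/3$ required by Theorem~\ref{clustering} may well fail here, since uniform clustering only guarantees $\delta < \xi_\ell < 1-\delta$. A direct black-box application is therefore insufficient, but the nested covers $U_\varepsilon$ from nice scaling supply enough additional structure to close the gap.

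For fixed $\ell$, I would introduce a geometrically spaced chain of intermediate scales $\varepsilon_\ell = \varepsilon^{(0)} < \varepsilon^{(1)} < \cdots < \varepsilon^{(N)} \leq 1$ and pass to the corresponding covers $U^{(i)} := U_{\varepsilon^{(i)}}$, which nest by property~(i) of Definition~\ref{def-1.5}. Each pair of eigenvalues of $H^{q_\ell}(\pi/2)$ is then classified by the smallest scale $i$ at which it first lies in two different intervals of $U^{(i)}$. Following the schema of the proof of Theorem~\ref{clustering}, one obtains a bound of the form of a sum over such pair classes, each contribution involving (a) the count of sub-intervals inside a parent interval, which is at most $C_3(\varepsilon^{(i)}/\varepsilon^{(i-1)})^\omega$, and (b) the minimum distance forced between eigenvalues lying in distinct sub-intervals, which is at least $C_1\,\varepsilon_\ell^{\mu}$ by property~(ii) of Definition~\ref{def-1.4}.

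Summing these per-scale contributions one confronts two competing effects: finer scales produce more refined confinement of pairs (favoring smaller upper bounds), but also an exponentially growing count of pair classes (working against the bound). I expect the hypothesis (\ref{eq-1.5}) to be precisely the condition that the resulting geometric series over scales converges, with the factor $2\omega(\mu-1)/(\mu-\omega)$ appearing as the marginal rate at which these two effects balance. When (\ref{eq-1.5}) holds with margin $\zeta > 0$, the sum should yield a stretched-exponential factor $e^{-c q_\ell^{\rho}}$ for some $\rho > 0$; since the polynomial prefactor $T^{4}(\sup_j b_{q_\ell,j})^{2} = O(q_\ell^{4/\alpha_\ell - 2})$ (using $T \leq q_\ell^{1/\alpha_\ell}$ and the Deift--Simon bound $\sup_j b_{q_\ell,j} = O(1/q_\ell)$) is polynomially bounded, the stretched exponential dominates and delivers (\ref{eq-1.6}) for every $m$.

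The main obstacle will be carrying out the per-scale bookkeeping rigorously. Two subtleties need care. First, the covers $U^{(i)}$ for $i \geq 1$ are not required to consist of disjoint intervals (per the remark after Definition~\ref{def-1.5}), so the classification by ``smallest separating scale'' is not a true partition and the resulting overcount must be absorbed into the constants --- plausibly into $C_3$ and the exponent $\omega$. Second, the occupancy information $\xi_\ell$ enters only up to a $\log q_\ell$ correction via property~(iv) of Definition~\ref{def-1.4}, which must be tracked when translating cluster counts across scales. Making the optimisation over the chain $\{\varepsilon^{(i)}\}$ produce exactly the balance (\ref{eq-1.5}), rather than some weaker inequality, is where I expect the real technical work to lie; everything else should follow the pattern of Theorem~\ref{clustering} applied iteratively across the hierarchy.
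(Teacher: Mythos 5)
Your high-level plan---run the Theorem \ref{clustering} argument through Lemma \ref{first-lemma}, introduce a geometric chain of intermediate scales via the nice-scaling covers, and let \eqref{eq-1.5} be the convergence condition for the resulting product---has the same architecture as the paper's proof, which establishes a general polynomial lemma (Lemma \ref{lemma-3.8}) and feeds $\calD^{q_\ell}$ into it. But the central estimate is mis-specified in a way that matters. The quantity to bound from below is $|\calD^{q_\ell}(E+i\varepsilon_\ell)|$ for each fixed $E$, and in the hard case ($E$ far from its associated zero) this reduces to the factor $3^{\#(Z\cap I_E)}\geq 3^{q_\ell^{\xi_\ell}}$ times the product $\prod_{z_j\notin I_E}\bigl|\tfrac{\hat E+4\varepsilon_\ell-z_j}{\hat E-z_j}\bigr|$. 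The stretched-exponential \emph{gain} comes entirely from the zeros inside the single interval $I_E$, exactly as in Theorem \ref{clustering}; the nested covers are used only for damage control on the \emph{loss} from zeros outside $I_E$: at each scale $\hat\varepsilon^{\,r}$, the nice-scaling count $C_3(\hat\varepsilon^{\,r}/\varepsilon_\ell)^{\omega}$, the occupancy bound $q_\ell^{\bar\xi_\ell}$, and the \emph{length} lower bound $C_1\hat\varepsilon^{\,r\mu}$ (which forces the $j$-th nearest interval at that scale to sit at distance $\gtrsim j\,C_1\hat\varepsilon^{\,r\mu}$ from $\hat E$) together bound how many zeros can be how close to $E$. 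Condition \eqref{eq-1.5} then makes the total logarithmic loss $o(q_\ell^{\xi_\ell})$.

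Your version inverts this structure: you classify \emph{pairs} of eigenvalues by separating scale and expect the sum over pair classes itself to produce the bound, and you cite property (ii) of Definition \ref{def-1.4} as forcing a minimum \emph{distance} $C_1\varepsilon_\ell^{\mu}$ between eigenvalues in distinct sub-intervals. Property (ii) only lower-bounds interval \emph{lengths}; adjacent (or, for the extended covers, overlapping) intervals force no separation whatsoever, and the distances actually used vary with the scale index rather than being a single $\varepsilon_\ell^{\mu}$. Without identifying the $3^{q_\ell^{\xi_\ell}}$ factor from $I_E$ as the source of the super-polynomial decay, the ``competing effects'' you describe do not assemble into \eqref{eq-1.6}. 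You also omit the complementary case $E\in A^{\varphi}$ (points near their associated zero), where the bound instead requires $\min_j|(\calD^{q_\ell})'(\wti{E}_{q_\ell,j})|$ to be bounded away from zero---supplied by Lemma \ref{derivative-bound} together with the Deift--Simon bound on the band widths.
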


\begin{corollary}\label{cor-1.10}
Under the assumptions of Theorem \ref{th-1.8}, $\alpha_u^-\leq \liminf_{\ell \rightarrow \infty} \alpha_\ell$. 
If, moreover, the sequence $\{q_\ell\}_{\ell=1}^\infty$ in the theorem is exponentially growing, then 
$\alpha_u^+ \leq \limsup_{\ell \rightarrow \infty} \alpha_\ell$. 
\end{corollary}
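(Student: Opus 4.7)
The plan is to promote Theorem~\ref{th-1.8}'s finite-scale bound $P(q_\ell,T)\le C_m q_\ell^{-m}$ to an estimate on $P(T^\alpha,T)$, using the monotonicity $P(q,T)\le P(q',T)$ for $q\ge q'$ (immediate from $\{|n|>q\}\subseteq\{|n|>q'\}$), and pairing each relevant $T$ with an index $\ell$ for which $q_\ell$ sits between $T^{\alpha_\ell}$ and $T^\alpha$. Throughout we may assume $q_\ell\to\infty$; this is implicit in the uniform clustering hypothesis (to make the length scales $q_\ell^{-1/\alpha_\ell}$ meaningful) and automatic from Definition~\ref{def-1.1} in the second half of the statement.

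For the liminf bound, fix $\alpha>\alpha_*:=\liminf_\ell\alpha_\ell$ and extract a subsequence $\{\ell_k\}$ with $\alpha_{\ell_k}\to\alpha_*$ and $\alpha_{\ell_k}<\alpha$ for all large $k$. Setting $T_k:=q_{\ell_k}^{1/\alpha_{\ell_k}}$ gives $T_k\to\infty$ and $T_k^\alpha\ge q_{\ell_k}$, so monotonicity and Theorem~\ref{th-1.8} yield $P(T_k^\alpha,T_k)\le C_m q_{\ell_k}^{-m}$ for every $m>0$. Dividing by $\log T_k=\alpha_{\ell_k}^{-1}\log q_{\ell_k}$ gives
\[\frac{\log P(T_k^\alpha,T_k)}{\log T_k}\le \frac{\log C_m}{\log T_k}-m\alpha_{\ell_k}\to -m\alpha_*.\]
Since $\alpha_*\ge\delta>0$ by Definition~\ref{def-1.4}(iii) and $m$ is arbitrary, $\liminf_{T\to\infty}\log P(T^\alpha,T)/\log T=-\infty$, so $\alpha$ lies outside the set defining $\alpha_u^-$; letting $\alpha\downarrow\alpha_*$ yields $\alpha_u^-\le\alpha_*$.

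For the limsup bound, set $\rho:=\sup_\ell q_{\ell+1}/q_\ell<\infty$ and $\alpha^*:=\limsup_\ell\alpha_\ell$, fix $\alpha>\alpha^*$, and choose $\alpha'\in(\alpha^*,\alpha)$, so that $\alpha_\ell\le\alpha'$ for all large $\ell$. For each large $T$ let $\ell(T)$ be the maximal index with $q_{\ell(T)}\le T^\alpha$; then $\ell(T)\to\infty$, and maximality together with $q_{\ell(T)+1}\le\rho\, q_{\ell(T)}$ gives $T^\alpha/\rho<q_{\ell(T)}\le T^\alpha$. Because $\alpha/\alpha'>1$, for $T$ large enough
\[q_{\ell(T)}^{1/\alpha_{\ell(T)}}\ge q_{\ell(T)}^{1/\alpha'}>T^{\alpha/\alpha'}\rho^{-1/\alpha'}\ge T,\]
so Theorem~\ref{th-1.8} applies at $\ell(T)$. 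Monotonicity and $\log q_{\ell(T)}\ge\alpha\log T-\log\rho$ then produce
\[\frac{\log P(T^\alpha,T)}{\log T}\le\frac{\log C_m+m\log\rho}{\log T}-m\alpha\to -m\alpha.\]
Arbitrariness of $m$ forces $\limsup_{T\to\infty}\log P(T^\alpha,T)/\log T=-\infty$, hence $\alpha_u^+\le\alpha$, and letting $\alpha\downarrow\alpha^*$ finishes the argument.

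The only delicate point is the interpolation in the second part: inserting the auxiliary exponent $\alpha'$ between $\alpha^*$ and $\alpha$ so that the bounded multiplicative slack $\rho$ coming from $q_{\ell+1}/q_\ell\le\rho$ is absorbed into the factor $T^{\alpha/\alpha'}$. This is precisely where the exponential-growth hypothesis is used; everything else is bookkeeping that follows once $P(q_{\ell(T)},T)$ has been inserted via monotonicity, so no genuine obstacle arises.
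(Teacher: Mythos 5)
Your proof is correct and takes essentially the same route as the paper, which factors the argument through Proposition \ref{prop-4.1}: monotonicity of $P(q,T)$ in $q$, matching $T^\alpha$ against the scales $q_\ell$, and exploiting the bounded ratios $q_{\ell+1}/q_\ell$ in the exponentially growing case, with the varying exponents handled by passing to $\alpha$ slightly above $\liminf_\ell\alpha_\ell$ (resp.\ $\limsup_\ell\alpha_\ell$) exactly as you do. Your bookkeeping for the $\alpha_u^+$ bound is in fact a bit more direct than the paper's, which instead perturbs the outer exponent and borrows the $q_{\ell(T)+\lfloor\sqrt{\ell(T)}\rfloor}$ construction from Damanik--Tcheremchantsev, but the underlying mechanism is identical.
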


Note that \eqref{eq-1.5} says that one may trade strong clustering
for greater degree of uniformity over different length scales: When
$\mu=1$ (so cluster sizes are very uniform), \eqref{eq-1.5} says
that the `clustering strength' parameter---$\xi_\ell$---only has to
be positive. On the other hand, when ``$\mu=\infty$'' \eqref{eq-1.5}
says $\xi_\ell\alpha_\ell>2\omega$. Now note that, since there are at
least $q^\xi$ eigenvalues in each interval, there are at most
$q^{1-\xi_\ell}=\varepsilon^{-\alpha_\ell(1-\xi_\ell)}$ intervals, which shows that the assumption 
$\omega \sim \alpha_\ell(1-\xi_\ell)$ is a natural one. But this, combined with $\xi_\ell\alpha_\ell>2\omega$, 
immediately implies $\xi_\ell>2/3$,
which is the condition of Theorem \ref{clustering}. 

Now let $H=\Delta+V$ be a
\emph{full line} Schr\"odinger operator. 
We shall treat $H$ by reducing the analysis to that of two corresponding half-line cases. 
Let \beq \no
H^{\pm}=\left(
\begin{array}{ccccc}
V_{\pm1}    & 1 & 0      & \ldots  & \ldots     \\
1    & V_{\pm2} & 1    & \ddots    & \ddots  \\
0      & 1 & \ddots    & \ddots    &\ddots \\
\vdots   &\ddots  & \ddots & V_{\pm n}    &\ddots     \\
 \vdots   & \ddots & \ddots & \ddots & \ddots \\
\end{array} \right).
\eeq be the restrictions of $H$ to the positive and negative
half-lines, with the restriction to the negative half-line rotated
to act on $\ell^2(\bbN)$ for notational convenience. (Note that
$V_0$ is not present in either $H^+$ or $H^-$). Furthermore, let
\beq \no P_{\delta_0}(q,T)= \sum_{|n|>q}\frac{2}{T}\int_0^\infty
\left| \ip{\delta_n}{e^{-itH}\delta_0} \right|^2 e^{-2t/T}dt \eeq
and \beq \no  P^{\pm}_{\delta_1}(q,T)=
\sum_{n>q}\frac{2}{T}\int_0^\infty \left|
\ip{\delta_n}{e^{-itH^{\pm}}\delta_1} \right|^2 e^{-2t/T}dt, \eeq so
that the sum in the second formula is restricted to $\bbN$. Then

\begin{proposition}\label{full-line}
For any $q>1$ and any $T>0$, \beq \label{full-line-from-half}
P_{\delta_0}(q,T) \leq T^2 \left(
P^{+}_{\delta_1}(q,T)+P^{-}_{\delta_1}(q,T) \right) \eeq
\end{proposition}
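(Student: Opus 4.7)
My plan is to relate the full-line dynamics starting from $\delta_0$ to the half-line dynamics starting from $\delta_{\pm 1}$ via a Duhamel expansion around the decoupled operator, and then turn the resulting time convolution into an $L^2$ bound using Young's inequality.

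First, I would decompose $H = H_0 + K$, where $H_0$ is the operator obtained from $H$ by cutting the bonds $(0,1)$ and $(0,-1)$; under the identification of $\ell^2(\bbZ)$ with $\ell^2(\{1,2,\ldots\}) \oplus \bbC\delta_0 \oplus \ell^2(\{-1,-2,\ldots\})$, $H_0$ acts as $H^+$, multiplication by $V_0$, and $H^-$ on the three summands, respectively, and $K = |\delta_0\rangle\langle\delta_1| + |\delta_1\rangle\langle\delta_0| + |\delta_0\rangle\langle\delta_{-1}| + |\delta_{-1}\rangle\langle\delta_0|$. The Duhamel formula gives
\beq \no
e^{-itH}\delta_0 = e^{-itH_0}\delta_0 - i\int_0^t e^{-i(t-s)H_0} K e^{-isH}\delta_0 \, ds.
\eeq
Setting $a_j(s) = \ip{\delta_j}{e^{-isH}\delta_0}$, one computes $K e^{-isH}\delta_0 = a_0(s)(\delta_1+\delta_{-1}) + (a_1(s)+a_{-1}(s))\delta_0$. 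Since $H_0$ preserves the three-summand decomposition, $e^{-i(t-s)H_0}\delta_{\pm 1} = e^{-i(t-s)H^{\pm}}\delta_{\pm 1}$. Taking inner product with $\delta_n$ for $n>q$, the contribution of $\delta_0$ vanishes, as does the $H^-$ piece (its image stays on the negative half-line), leaving
\beq \no
\ip{\delta_n}{e^{-itH}\delta_0} = -i\int_0^t a_0(s)\, \ip{\delta_n}{e^{-i(t-s)H^+}\delta_1}\, ds, \quad n>q,
\eeq
and the symmetric identity with $H^-$ for $n<-q$.

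The right-hand side is a time convolution. To exploit this, I would multiply both sides by $e^{-t/T}$, so that the convolution becomes the convolution of $F(s) := a_0(s)e^{-s/T}$ with $G_n(u) := \ip{\delta_n}{e^{-iuH^+}\delta_1} e^{-u/T}$, both supported on $[0,\infty)$. By Young's convolution inequality ($L^1 \star L^2 \subseteq L^2$),
\beq \no
\int_0^\infty |\ip{\delta_n}{e^{-itH}\delta_0}|^2 e^{-2t/T}\, dt = \|F \star G_n\|_2^2 \leq \|F\|_1^2\, \|G_n\|_2^2,
\eeq
and $\|F\|_1 \leq \int_0^\infty e^{-s/T}\, ds = T$ using $|a_0(s)|\leq 1$. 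Summing over $n>q$ and multiplying by $2/T$ yields
\beq \no
\sum_{n>q}\frac{2}{T}\int_0^\infty |\ip{\delta_n}{e^{-itH}\delta_0}|^2 e^{-2t/T}\, dt \leq T^2 \, P^+_{\delta_1}(q,T),
\eeq
and the same argument on the negative side gives the bound with $P^-_{\delta_1}(q,T)$. Adding the two halves produces $P_{\delta_0}(q,T) \leq T^2(P^+_{\delta_1}(q,T) + P^-_{\delta_1}(q,T))$, as claimed.

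The content of the argument is essentially routine; the only step requiring care is the bookkeeping in passing from the $\bbZ$-picture, in which $H^-$ acts on $\ell^2(\{-1,-2,\ldots\})$, to the $\bbN$-indexed operator used in the definition of $P^-_{\delta_1}$, which amounts to noting that $|\ip{\delta_{-m}}{e^{-iuH^-}\delta_{-1}}_{\ell^2(\bbZ)}| = |\ip{\delta_m}{e^{-iuH^-}\delta_1}_{\ell^2(\bbN)}|$. The source of the $T^2$ factor is then transparent: it is precisely $\|e^{-\cdot/T}\|_{L^1([0,\infty))}^2$, the cost of exchanging one copy of the decay weight for an $L^1$ bound on $a_0$.
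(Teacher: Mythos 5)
Your proof is correct, but it takes the time-domain route where the paper works in the energy domain. The paper decouples the origin via the rank-two perturbations $R^{\pm}=\ip{\delta_{\pm 1}}{\cdot}\delta_0+\ip{\delta_0}{\cdot}\delta_{\pm 1}$ and applies the resolvent identity to get, for $n>q$, the factorization $\ip{\delta_{\pm n}}{(H-z)^{-1}\delta_0}=-\ip{\delta_n}{(H^{\pm}-z)^{-1}\delta_1}\,\ip{\delta_0}{(H-z)^{-1}\delta_0}$; the $T^2$ then comes from the trivial bound $|\ip{\delta_0}{(H-E-i/T)^{-1}\delta_0}|\leq T$, and formula \eqref{Parseval} converts the resulting energy-space inequality back to the time averages defining $P$. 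Your Duhamel expansion around the decoupled $H_0$ is precisely the Fourier--Laplace preimage of that resolvent identity, and your application of Young's inequality with $\|a_0(\cdot)e^{-\cdot/T}\|_{L^1([0,\infty))}\leq T$ is the time-domain incarnation of the same diagonal Green function bound (indeed $\ip{\delta_0}{(H-E-i/T)^{-1}\delta_0}$ is, up to a constant, the Fourier transform of $a_0(t)e^{-t/T}\chi_{[0,\infty)}(t)$, so $\sup_E|G(0,0;E+i/T)|\leq\|a_0 e^{-\cdot/T}\|_{L^1}\leq T$ is the same estimate). The only other cosmetic difference is that you cut both bonds at the origin at once while the paper cuts one at a time, which changes nothing. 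What your version buys is that it bypasses \eqref{Parseval} entirely and makes the origin of the $T^2$ transparent as a convolution cost; what the paper's version buys is uniformity of method, since the rest of the paper lives in the energy domain and \eqref{Parseval} is already its standard tool. Both give the identical constant, so this is a legitimate alternative proof rather than a gap.
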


Thus, as remarked above, the full-line problem may be reduced to two
half-line problems (albeit with an extra factor of $T^2$).
Accordingly, Theorems \ref{Thouless-width}, \ref{clustering} and \ref{th-1.8} above 
imply corresponding theorems and corollaries similar to 
Corollaries \ref{cor-1.3}, \ref{cor-1.6} and \ref{cor-1.10} 
for a full line operator. Formulating these
results is straightforward, so we leave that to the interested
reader. We only note that, in the case of the application of Theorems \ref{clustering} and \ref{th-1.8} and the corresponding 
corollaries, the $T^2$ factor is of minor significance because of the existence of exponential bounds. 
In the application of Theorem \ref{Thouless-width}, however, this factor is clearly significant. 

To demonstrate the applicability of our results we use them to
obtain a dynamical upper bound for the Fibonacci Hamiltonian,
which is the most studied one-dimensional model of a quasicrystal.
This is the operator with $V$ given by: \beq
\label{fibonacci} V^\lambda_{\rm{Fib};n}=\lambda \chi_{[1-\theta,1)}(n\theta \
\rm{mod} 1), \quad \theta=\frac{\sqrt{5}-1}{2} \eeq with a coupling
constant $\lambda>0$ and where $\chi_I$ is the characteristic
function of $I$. For a review of some of the properties of the Fibonacci Hamiltonian, see \cite{d3}.

A subballistic upper bound for the fast-spreading part of the wave packet under the
dynamics generated by the Fibonacci Hamiltonian has been 
recently obtained by Damanik and Tcheremchantsev in \cite{DT3, DT4}. 
They used lower bounds on the growth of transfer matrices off the real line to 
obtain an upper bound on $\alpha_u^+$. In particular, they confirm the asymptotic dependence 
$\alpha_u^+ \sim \frac{1}{\log \lambda}$ (as $\lambda \rightarrow \infty$) of the transport exponent on the 
coupling constant as predicted by numerical calculations (see \cite{ah,ah1}). 
In Section 5 below, using purely spectral data for the Fibonacci Hamiltonian, we apply our above results to get the same 
asymptotics for $\alpha_u^+ (\lambda)$ (albeit with worse constants than those of \cite{DT3}).

We note that our primary purpose in this paper is to establish general
bounds which are based on clustering and self-similar multiscale clustering
of appropriate eigenvalues. In particular, we aim to highlight the connection
between self-similar Cantor-type spectra and what is often called ``anomalous transport.''
Roughly speaking, our results show that as long as the self-similar Cantor-type
structure manifests itself in a corresponding tight behavior of the eigenvalues
of appropriate finite-volume approximations of the operator, meaningful upper
bounds can indeed be obtained (and combined with existing lower bounds to
establish the occurrence of anomalous transport). We further note that
applying our general results to the Fibonacci Hamiltonian is
done mainly to demonstrate their applicability in their present form. In cases
where one is interested in obtaining bounds for concrete models, it is likely
that by using some of our core technical ideas below along with the most
detailed relevant data available for the concrete model in question one will be
able to establish stronger bounds than those obtained by direct application
of our above results.

The rest of this paper is structured as follows. Section 2 has some
preliminary estimates that will be used throughout the paper. The
proof of Theorem \ref{Thouless-width} is also given there. Section 3
has the proof of Theorems \ref{clustering} and \ref{th-1.8}. Section 4 has the proofs of corollaries \ref{cor-1.3}, 
\ref{cor-1.6} and \ref{cor-1.10} and of Proposition \ref{full-line}. Section 
5 describes the application of our results to the Fibonacci Hamiltonian.

\emph{Acknowledgment}. This research was supported
by The Israel Science Foundation (Grant No.\ 1169/06).


\section{Preliminary Estimates and Proof of Theorem \ref{Thouless-width}}

Let $H=\Delta+V$ be a discrete Schr\"odinger operator on $\bbN$ with
$V$, the potential, a bounded real-valued function. For $q \in
\bbN$, $k \in [0,\pi]$, let $H^q(k)$ be defined as in
\eqref{Truncation}. Finally, let
$H^q_{\textrm{per}}=\Delta+V^q_{\textrm{per}}$ where
$V^q_{\textrm{per}}(j+nq)=V(j)$ for $1\leq j \leq q$ and $n \geq 0$.

We start by deriving an inequality that will be central to
all subsequent developments. Let $G(k,n;z)=\ip{\delta_n}{\left(
H-z\right)^{-1} \delta_k}$ and $G^q_{\textrm{per}}(k,n;z)=
\ip{\delta_n}{\left( H^q_{\textrm{per}}-z\right)^{-1} \delta_k}$. We
define \beq \label{discriminant}
\calD^q(z)=G^q_{\textrm{per}}(1,q;z)+\frac{1}{G^q_{\textrm{per}}(1,q;z)}.
\eeq It is not hard to see, by recognizing $\calD^q(z)$ as the trace
of a transfer matrix, that it is a \emph{monic} polynomial of degree
$q$. This polynomial is called the discriminant for
$H^q_{\textrm{per}}$. It has been studied extensively in connection
with the spectral analysis of periodic Schr\"odinger operators (see
e.g.\ \cite[Section 7]{teschl}---the discriminant there is half ours). 
Among its properties that will be useful for us are:
\begin{enumerate}
\item The zeros of $\calD^q$ are precisely the $q$ distinct eigenvalues of
$H^q(\frac{\pi}{2})$, i.e., the set $\{ \wti{E}_{q,1},
\wti{E}_{q,2},\ldots,\wti{E}_{q,q} \}$ (and so are real and simple).
\item The essential spectrum of
$H^q_{\textrm{per}}$ is given by the inverse image under $\calD^q$
of the set $[-2,2]$.
\item The restriction of $\calD^q$ to $\bbR$ takes
the values $2$ and $-2$ precisely at the eigenvalues of $H^q(0)$ and
of $H^q(\pi)$, and for  any $1 \leq j \leq q$, $\left|
\calD^q\left(E_{q,j}(\pi) \right)-\calD^q\left(E_{q,j}(0) \right)
\right|=4$.
\item $(\calD^q)'(E)=0$ implies $E \in \bbR$ and
$|\calD^q(E)| \geq 2$.
\end{enumerate}
Our starting point is

\begin{lemma} \label{first-lemma}
For $q>1$ and for any positive $T$, \beq \label{central-inequality}
P(q,T) \equiv P_{\delta_1}(q,T) \leq 4 T^4 \left(1+2 \parallel V
\parallel_\infty \right)^2 \left( \inf_{E \in \bbR} \left | \calD^q(E+i/T)\right | \right)^{-2}\eeq
\end{lemma}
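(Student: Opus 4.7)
The plan is to convert the time integral defining $P(q,T)$ into an integral over the real line of the squared modulus of the Green's function on the horizontal contour $\mathrm{Im}\, z = 1/T$, and then to exploit the fact that $V$ agrees with a periodic potential on $\{1,\ldots,q\}$ to isolate the discriminant $\calD^q$ via a Floquet analysis of the periodic tail.

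First, for each fixed $n$, I would apply Plancherel to the $L^2$-function $t \mapsto e^{-t/T}\langle \delta_n, e^{-itH}\delta_1\rangle \chi_{[0,\infty)}(t)$, whose Fourier transform in $t$ is, up to a constant factor, $G(n, 1; E + i/T)$. Summing the resulting identity over $n > q$ expresses the time integral as
\beq \no
P(q,T) = \frac{1}{\pi T}\int_{-\infty}^\infty \sum_{n > q} |G(n, 1; E + i/T)|^2 \, dE.
\eeq
Next, since $V_n = V^q_{\textrm{per}}(n)$ for $1 \leq n \leq q$, the perturbation $W := V - V^q_{\textrm{per}}$ is supported on $\{q+1, q+2, \ldots\}$ with $\|W\|_\infty \leq 2\|V\|_\infty$. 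The second resolvent identity together with $\|(H - z)^{-1}\| \leq T$ and the support property of $W$ yields
\beq \no
\sum_{n > q}|G(n, 1; z)|^2 \leq (1 + 2T\|V\|_\infty)^2 \sum_{n > q}|G^q_{\textrm{per}}(n, 1; z)|^2.
\eeq

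The heart of the argument is a Floquet identification of the periodic Green's function: $G^q_{\textrm{per}}(\cdot, 1; z)$ is, up to a constant, the $\ell^2$-at-$+\infty$ Weyl solution $\psi^{\textrm{per}}$, which is a Floquet eigenvector of the one-period transfer matrix satisfying $\psi^{\textrm{per}}(n+q; z) = \lambda_-(z)\psi^{\textrm{per}}(n; z)$. This produces the clean identity
\beq \no
\sum_{n > q}|G^q_{\textrm{per}}(n, 1; z)|^2 = |\lambda_-(z)|^2 \sum_{n \geq 1}|G^q_{\textrm{per}}(n, 1; z)|^2.
\eeq
Since $\lambda_\pm$ are the roots of $\lambda^2 - \calD^q(z)\lambda + 1 = 0$ (so $\lambda_+ + \lambda_- = \calD^q$ and $\lambda_+\lambda_- = 1$) and $|\lambda_-| \leq 1 \leq |\lambda_+|$ for $\mathrm{Im}\, z > 0$, a short triangle-inequality argument shows $|\lambda_-(z)| \leq 2/|\calD^q(z)|$.

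To finish, I would integrate the combined estimate over $E$, pulling $\sup_E|\lambda_-|^2 \leq 4/\inf_E|\calD^q(E + i/T)|^2$ outside the integral and using the spectral-theorem identity $\int \sum_n |G^q_{\textrm{per}}(n,1; E+i/T)|^2 dE = \pi T$ to cancel the $1/(\pi T)$ prefactor. This yields $P(q,T) \leq 4(1+2T\|V\|_\infty)^2/\inf_E |\calD^q(E+i/T)|^2$, from which the stated bound follows by the crude estimate $(1+2T\|V\|_\infty)^2 \leq T^4(1+2\|V\|_\infty)^2$ in the regime $T \geq 1$ (with the small-$T$ regime controlled by the trivial bound $P(q,T) \leq 1$ together with the elementary fact $|\calD^q(E+i/T)| \geq T^{-q}$, which makes the right-hand side vacuously large). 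The main delicacy is the Floquet/Weyl identification yielding the clean $|\lambda_-|^2$ factor in the tail norm, combined with the correct bookkeeping in the resolvent comparison to produce the $(1+2T\|V\|_\infty)^2$ factor.
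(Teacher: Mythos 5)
Your argument is correct in the regime $T\geq 1$, and it takes a genuinely different route through the middle of the proof. Both you and the paper start from the same Plancherel/Parseval identity reducing $P(q,T)$ to $\frac{1}{\pi T}\int_{\bbR}\sum_{n>q}|G(n,1;E+i/T)|^2\,dE$. From there the paper decouples the operator at site $q$, reduces everything to the single matrix element $\sup_E|G(1,q;E+i/T)|^2$, compares it with $G^q_{\textrm{per}}(1,q;z)$ by a resolvent expansion plus Cauchy--Schwarz, and invokes the identity $\calD^q=G^q_{\textrm{per}}(1,q;z)+1/G^q_{\textrm{per}}(1,q;z)$. You instead compare the full $\ell^2$-tails $\sum_{n>q}|G(n,1;z)|^2$ and $\sum_{n>q}|G^q_{\textrm{per}}(n,1;z)|^2$ directly (legitimate, since $W=V-V^q_{\textrm{per}}$ is supported on $n>q$, so $\|WG^q_{\textrm{per}}(\cdot,1;z)\|\leq\|W\|_\infty(\sum_{n>q}|G^q_{\textrm{per}}(n,1;z)|^2)^{1/2}$), and then extract $|\lambda_-|^2$ from the Floquet periodicity of the Weyl solution, with $|\lambda_-|\leq 2/|\calD^q|$ playing the role of the paper's $|G^q_{\textrm{per}}(1,q;z)|\leq 2/|\calD^q(z)|$ (these are literally the same inequality, as $G^q_{\textrm{per}}(1,q;z)=\lambda_-(z)$). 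Your comparison step costs only one factor of $\|(H-z)^{-1}\|\leq T$, whereas the paper's Cauchy--Schwarz step costs $\varepsilon^{-2}=T^2$; as a result your route actually yields the sharper bound $4T^2(1+2\|V\|_\infty)^2(\inf_E|\calD^q(E+i/T)|)^{-2}$ for $T\geq1$, which you then deliberately weaken to the stated $T^4$ form.

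The one genuine error is your treatment of $T<1$. The bound $|\calD^q(E+i/T)|\geq T^{-q}$ gives an \emph{upper} bound $(\inf_E|\calD^q|)^{-2}\leq T^{2q}$, so it makes the right-hand side of \eqref{central-inequality} small, not ``vacuously large''; the inequality points the wrong way for your purpose. In fact no patch can work: for $V=0$ and $q=2$ one has $P(2,T)\sim\frac{3}{8}T^4$ as $T\to0$ while the right-hand side is $4T^8(1+2T^2)^{-2}$, so the lemma as literally stated fails for small $T$. The paper's own proof has the same tacit restriction (the step $1+2\|V\|_\infty/\varepsilon^2\leq(1+2\|V\|_\infty)/\varepsilon^2$ in \eqref{G-Gper2} requires $\varepsilon\leq1$), and since the lemma is only ever applied with $T\to\infty$ this is immaterial for the paper's results --- but you should state the restriction $T\geq1$ rather than claim a small-$T$ argument that does not hold.
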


\begin{proof}

We start by recalling the formula \cite{KKL}: \beq
\label{Parseval} \int_0^\infty \left|
\ip{\delta_n}{e^{-itH}\delta_k} \right|^2 e^{-2t/T}dt=\frac{1}{2
\pi} \int_{-\infty}^\infty \left|\ip{\delta_n}{\left(H-E-i/T
\right)^{-1} \delta_k} \right|^2dE \eeq which is key in many recent
works on this topic (this work included).

By \eqref{Parseval} we see that \beq \label{starting-point}
\begin{split} P(q,T) &=\frac{1}{ \pi} \sum_{n>q}  \frac{1}{T}
\int_{-\infty}^\infty \left|\ip{\delta_n}{\left(H-E-i/T \right)^{-1}
\delta_1} \right|^2dE \\ &=\frac{1}{\pi} \int_{\bbR}dE \sum_{n>q}
\varepsilon \left |G(1,n; E+i\varepsilon) \right|^2,
\end{split} \eeq
where we use $\varepsilon=\frac{1}{T}$.

Now, let $\wti{H}^q=H-\ip{\delta_{q+1}}{\cdot} \delta_q -\ip{\delta_q}{\cdot}\delta_{q+1}$ and let
$\wti{G}^q(k,n;z)=\ip{\delta_n}{\left( \ti{H}^q-z\right)^{-1}
\delta_k}$. Then, by the resolvent formula \beq \label{resolvent1}
\begin{split}
G(1,n;z)&=\wti{G}^q(1,n;z)-G(1,q;z)\wti{G}^q(q+1,n;z)-G(1,q+1;z)\wti{G}^q(q,n;z)
\\ &=-G(1,q;z)\wti{G}^q(q+1,n;z),\end{split}\eeq
if $n>q$ since $\wti{G}$ is a direct sum. Moreover, note that
$\varepsilon\sum_{n>q}|\wti{G}^q(q+1,n;E+i\varepsilon)|^2=\Im
\wti{G}^q(q+1,q+1;E+i\varepsilon),$ (this can be seen by noting that 
$f(n) \equiv \wti{G}^q(q+1,n;E+i\varepsilon)$, satisfies $f(n+1)+f(n-1)+V(n)f(n)=(E+i\varepsilon)f(n)$ 
for all $n>q$; now multiply this by $\overline{f(n)}$, sum up and take imaginary parts).
Thus, we get from \eqref{starting-point} that \beq
\label{transport-Green} P(q,T)=\frac{1}{\pi} \int_{\bbR} \left
|G(1,q; E+i\varepsilon) \right|^2
\Im\wti{G}^q(q+1,q+1;E+i\varepsilon) dE, \eeq which, by
$\frac{1}{\pi} \int_{\bbR} \Im\wti{G}^q(q+1,q+1;E+i\varepsilon)
dE=1$, implies immediately that \beq
\label{transport-Green2}P(q,T)\leq \sup_{E \in \bbR}
|G(1,q,E+i\varepsilon)|^2. \eeq

Next, we approximate $G$ by $G^q_{\textrm{per}}$. Let
$\wti{H}_{\textrm{per}}^q=H^q_{\textrm{per}}-\ip{\delta_{q+1}}{\cdot}\delta_q -\ip{\delta_q}{\cdot}\delta_{q+1}$ and let
$\wti{G}_{\textrm{per}}^q(k,n;z)=\ip{\delta_n}{\left(
\wti{H}^q_{\textrm{per}}-z\right)^{-1} \delta_k}$. Note that
\eqref{resolvent1} holds with $G$ replaced by $G^q_{\textrm{per}}$
and $\wti{G}^q$ replaced by $\wti{G}^q_{\textrm{per}}$. Again, by
the resolvent formula (with $z=E+i \varepsilon$), \beq \label{G-Gper}
\begin{split} \left|G^q_{\textrm{per}}(1,q;z)-G(1,q;z) \right| &=
\left| \sum_{n
>q} G^q_{\textrm{per}}(1,n;z) \left(V^q_{\textrm{per}}(n)-V(n) \right) G(n,q;z)\right| \\
&= \left|G^q_{\textrm{per}}(1,q;z) \right| \\
& \times \left|\sum_{n>q}\wti{G}^q_{\textrm{per}}(q+1,n;z)
\left(V^q_{\textrm{per}}(n)-V(n) \right) G(n,q;z)  \right| \\
& \leq \left|G^q_{\textrm{per}}(1,q;z) \right| 2  \parallel V  \parallel_\infty \\
& \times \sqrt{\frac{\Im \wti{G}^q_{\textrm{per}(q+1,q+1;z)}}{\varepsilon}\frac{\Im G(q,q;z)}{\varepsilon}} \\
& \leq \frac{2 \parallel V \parallel_\infty}{\varepsilon^2} \left|G^q_{\textrm{per}}(1,q;z) \right|,
\end{split}\eeq
where the first inequality follows by applying Cauchy-Schwarz and the second inequality follows from
$|G(k,l;z)| \leq \frac{1}{\Im z}$.
This immediately implies
\beq \label{G-Gper2}
\begin{split}
|G(1,q;E+i\varepsilon)|&\leq
\left(1+\frac{2 \parallel V \parallel_\infty}{\varepsilon^2}\right) \left |G^q_{\textrm{per}}(1,q;E+i\varepsilon)\right|\\
&\leq \frac{1+2 \parallel V \parallel_\infty}{\varepsilon^2} \left |G^q_{\textrm{per}}(1,q;E+i\varepsilon)\right|.
\end{split}
\eeq

Now, since $G^q_{\textrm{per}}(1,n;z)$ is the exponentially decaying
solution to
$\psi(n+1)+\psi(n-1)+V^q_{\textrm{per}}(n)\psi(n)=z\psi(n)$ (for
$n>2$), it follows that $\left|G^q_{\textrm{per}}(1,q;z)\right|<1$
and so, by \eqref{discriminant}, that $2 \left|G^q_{\textrm{per}}
\right|^{-1} \geq |\calD^q(z)|$. This implies \beq \label{G-disc}
\left| G^q_{\textrm{per}}(1,q;z) \right| \leq
\frac{2}{|\calD^q(z)|}. \eeq

Combining \eqref{G-disc},\eqref{G-Gper2}, and \eqref{transport-Green2} (remembering that $\varepsilon=T^{-1}$) finishes
the proof.
\end{proof}

Thus, our problem is reduced to the analysis of $|\calD^q(z)|$. As
$\calD^q$ is monic we know \beq \label{Disc-form}
\calD^q(z)=\prod_{j=1}^q \left(z-\wti{E}_{q,j} \right). \eeq
Moreover, as the zeros of $(\calD^q)'$ are simple, any point $E \in
\bbR$ lies between two extremal points of $\calD^q$ or between an
extremal point and $\pm \infty$. Thus, each point $E \in \bbR$ lies
in an interval of the form $[x(E),y(E))$, or $[x(E), \infty)$, or
$(-\infty, y(E))$ where $x(E),y(E)$ are two extremal points of
$\calD^q$ and the interval contains no other extremal points. Any
such interval contains a unique zero of $\calD^q$. In this way we
associate with each point $E \in \bbR$ a unique zero
$\wti{E}_{q,j(E)}$ of $\calD^q$. Using this notation, for any $E \in
\bbR$ \beq \label{Dec-Discr}\begin{split}
|\calD^q(E+i\varepsilon)|^2 &=\prod_{j=1}^q
|E+i\varepsilon-\wti{E}_{q,j}|^2
\\
&=|E+i\varepsilon-\wti{E}_{q,j(E)}|^2\prod_{j \neq
j(E)}|E-\wti{E}_{q,j}|^2\frac{\prod_{j \neq
j(E)}|E+i\varepsilon-\wti{E}_{q,j}|^2}{\prod_{j \neq
j(E)}|E-\wti{E}_{q,j}|^2}  \\
& \geq \varepsilon^2 \prod_{j \neq j(E)}|E-\wti{E}_{q,j}|^2
\frac{\prod_{j \neq j(E)}|E+i\varepsilon-\wti{E}_{q,j}|^2}{\prod_{j
\neq
j(E)}|E-\wti{E}_{q,j}|^2}  \\
&=\varepsilon^2
\left|\frac{\calD^q(E)}{E-\wti{E}_{q,j(E)}}\right|^2\frac{\prod_{j
\neq j(E)}|E+i\varepsilon-\wti{E}_{q,j}|^2}{\prod_{j \neq
j(E)}|E-\wti{E}_{q,j}|^2}.
\end{split} \eeq

In a sense, all our theorems follow from lower bounds on the right hand side of
\eqref{Dec-Discr}. In particular, Theorem \ref{Thouless-width}
follows from noting $\frac{\prod_{j \neq
j(E)}|E+i\varepsilon-\wti{E}_{q,j}|^2}{\prod_{j \neq
j(E)}|E-\wti{E}_{q,j}|^2} \geq 1$ and studying the other terms,
while Theorems \ref{clustering} and \ref{th-1.8} follow from a more
detailed analysis of $\frac{\prod_{j \neq
j(E)}|E+i\varepsilon-\wti{E}_{q,j}|^2}{\prod_{j \neq
j(E)}|E-\wti{E}_{q,j}|^2}$.

We proceed now with the proof of Theorem \ref{Thouless-width}. We
first need a lemma.

\begin{lemma} \label{derivative-bound}
Recall $b_{q,j} \equiv |E_{q,j}(\pi)-E_{q,j}(0)|$. For any $E \in
[\wti{E}_{q,1}, \wti{E}_{q,q}]$ \beq \label{derivative-up-low-bound}
e \left|\frac{\calD^q(E)}{E-\wti{E}_{q,j(E)}} \right| \geq \left|
(\calD^q)'(\wti{E}_{q,j(E)})  \right| \geq \frac{\sqrt{5}+1}{b_{q,j}}
\eeq $($where, for $E=\wti{E}_{q,j}$ for some $j$, the left hand
side is interpreted as the derivative.$)$
\end{lemma}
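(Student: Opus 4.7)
The plan is to prove the two inequalities separately via short analytic arguments. Set $h(E)=\calD^q(E)/(E-\wti{E}_{q,j(E)})$, a polynomial of degree $q-1$ whose zeros are the $\wti{E}_{q,k}$ for $k\ne j(E)$; by l'H\^opital's rule $h(\wti{E}_{q,j(E)})=(\calD^q)'(\wti{E}_{q,j(E)})$. On the interval $[x(E),y(E)]$ between consecutive extremal points of $\calD^q$ containing $E$ and $\wti{E}_{q,j(E)}$, $\calD^q$ is strictly monotonic and every zero of $h$ lies outside $[x(E),y(E)]$.

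For the first inequality, I would write
$$\frac{h(E)}{h(\wti{E}_{q,j(E)})}=\prod_{k\ne j(E)}\left(1+\frac{E-\wti{E}_{q,j(E)}}{\wti{E}_{q,j(E)}-\wti{E}_{q,k}}\right),$$
observe that each factor is strictly positive (because every $\wti{E}_{q,k}$ with $k\ne j(E)$ sits outside $[x(E),y(E)]$), apply $\log(1+x)\ge x/(1+x)$ for $x>-1$, and simplify using $\sum_k 1/(E-\wti{E}_{q,k})=(\calD^q)'(E)/\calD^q(E)$:
$$\log\frac{|h(E)|}{|h(\wti{E}_{q,j(E)})|}\ge(E-\wti{E}_{q,j(E)})\sum_{k\ne j(E)}\frac{1}{E-\wti{E}_{q,k}}=\frac{(\calD^q)'(E)}{h(E)}-1.$$
Since $\calD^q$ is monotonic on $[x(E),y(E)]$ with a unique simple zero at $\wti{E}_{q,j(E)}$, the functions $(\calD^q)'$ and $h$ share a sign throughout this interval, so $(\calD^q)'(E)/h(E)\ge 0$ and hence $|h(E)|\ge|h(\wti{E}_{q,j(E)})|/e$, as required.

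For the second inequality, I would restrict to the band $[a_j,b_j]$ of width $b_{q,j}$ containing $\wti{E}_{q,j}$ (where $j=j(E)$ and $\{a_j,b_j\}=\{E_{q,j}(0),E_{q,j}(\pi)\}$). All other zeros of $h$ lie in different bands of the spectrum of $H^q_{\rm per}$, so $\log|h|$ is a sum of concave functions on $[a_j,b_j]$ and is itself concave. By property (3) of the discriminant, $|\calD^q(a_j)|=|\calD^q(b_j)|=2$, giving $|h(a_j)|=2/\alpha$ and $|h(b_j)|=2/\beta$ with $\alpha=\wti{E}_{q,j}-a_j$, $\beta=b_j-\wti{E}_{q,j}$, and $\alpha+\beta=b_{q,j}$. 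Concavity of $\log|h|$ at $\wti{E}_{q,j}$ yields
$$\log|(\calD^q)'(\wti{E}_{q,j})|\ge\frac{\beta}{b_{q,j}}\log\frac{2}{\alpha}+\frac{\alpha}{b_{q,j}}\log\frac{2}{\beta}=\log 2-\frac{\beta\log\alpha+\alpha\log\beta}{b_{q,j}}.$$
Applying concavity of $\log$ to bound the weighted mean on the right by $\log(2\alpha\beta/b_{q,j})$ and then AM-GM ($\alpha\beta\le b_{q,j}^2/4$) produces $|(\calD^q)'(\wti{E}_{q,j})|\ge b_{q,j}/(\alpha\beta)\ge 4/b_{q,j}$, comfortably exceeding $(\sqrt{5}+1)/b_{q,j}$. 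The main subtlety is verifying the sign claim $(\calD^q)'(E)/h(E)\ge 0$ in the first step, which reduces to a direct case analysis on whether $\calD^q$ is increasing or decreasing on $[x(E),y(E)]$.
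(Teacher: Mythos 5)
Your proof is correct, and for the second inequality it follows a genuinely different --- and in fact sharper --- route than the paper's. For the first inequality the two arguments are close relatives: the paper bounds $f=(\log\calD^q)'$ via the differential inequality $f'(E)<-1/(E-\wti{E}_{q,j})^2$ and integrates twice, whereas you work directly with the product representation of $h(E)/h(\wti{E}_{q,j(E)})$ and the elementary bound $\log(1+x)\ge x/(1+x)$; both reduce to the sign observation $(\calD^q)'(E)/h(E)\ge 0$ on the monotonicity interval (which your case analysis verifies correctly, since a simple zero of the increasing/decreasing $\calD^q$ forces $h$ to have the matching constant sign there) and both lose exactly the factor $e$. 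For the second inequality the paper locates the unique maximum of $|(\calD^q)'|$ on the band, applies the mean value theorem on one half-band to get $b^r_{q,j}\ge t_{q,j}$ or $b^\ell_{q,j}\ge t_{q,j}$, derives a logarithmic-derivative estimate on the other half-band leading to the quadratic inequality $(b^\ell_{q,j})^2+b^\ell_{q,j}t_{q,j}-t_{q,j}^2\ge 0$, and extracts the golden-ratio constant; the extreme bands $j=1,q$ need separate treatment there. Your argument instead uses concavity of $\log|h|$ on the entire band --- legitimate because every $\wti{E}_{q,k}$ with $k\ne j$ lies in the interior of a different band of the $q$-periodic spectrum, and these interiors are disjoint from $[a_j,b_j]$ --- together with the exact edge values $|h(a_j)|=2/\alpha$, $|h(b_j)|=2/\beta$, Jensen, and AM--GM. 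This is shorter, requires no case analysis or special handling of $j=1,q$, and yields $|(\calD^q)'(\wti{E}_{q,j})|\ge 1/b^\ell_{q,j}+1/b^r_{q,j}\ge 4/b_{q,j}$, which strictly improves the stated constant $\sqrt{5}+1\approx 3.24$ and is attained for $q=1$ (where $\calD^1(z)=z-V_1$ gives $b_{1,1}=4$ and $(\calD^1)'\equiv 1$); the improvement is harmless downstream since the lemma is only ever used as a lower bound. The only points worth making explicit in a final write-up are the trivial case $E=\wti{E}_{q,j(E)}$ (where the first inequality is immediate) and the remark that $b_{q,j},\alpha,\beta>0$ because $\calD^q$ runs monotonically from $\pm 2$ to $\mp 2$ across each band with its zero in the interior.
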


\begin{remark*}
The proof of this lemma is essentially contained in the proof of
Lemma 1 of \cite{L3}, although it is stated somewhat
differently there (also see \cite[Theorem 5.4]{LS}). We
repeat it here (with some details omitted) for the reader's
convenience.
\end{remark*}

\begin{proof}
We start with the upper bound. For this it suffices to show that for
any $E \in [\wti{E}_{q,j}, \wti{E}_{q,j+1}]$ ($1 \leq j \leq (q-1)$)
\beq \label{derivative-upper-bound} e
\left|\frac{\calD^q(E)}{E-\wti{E}_{q,j}} \right| \geq \left|
(\calD^q)'(\wti{E}_{q,j(E)})  \right| \eeq

Enumerate the zeros of $(\calD^q)'$ by
$E^0_{q,1},\ldots,E^0_{q,q-1}$. Clearly,
$\wti{E}_{q,j}<E^0_{q,j}<\wti{E}_{q,j+1}$ for any $1 \leq j \leq
(q-1)$, and any $E \in [\wti{E}_{q,j}, \wti{E}_{q,j+1}]$ is
contained either in $[\wti{E}_{q,j}, E^0_{q,j}]$ or in
$[E^0_{q,j},\wti{E}_{q,j+1}]$. As the zeros of $(\calD^q)'$ are
simple, $E^0_{q,j}$ is either a local maximum or a local minimum of
$\calD^q$. We shall prove \eqref{derivative-upper-bound} for $E \in
[\wti{E}_{q,j}, E^0_{q,j}]$ with $E^0_{q,j}$ a local maximum. All
the other cases are similar.

Let $f(E)=\frac{d}{dE}\log \calD^q(E)$. It is straightforward to see
that $f'(E)<\frac{-1}{(E-\wti{E}_{q,j})^2}$ and so \beq
\label{lower-f}
f(E)=-\int_E^{E^0_{q,j}}f'(x)dx>\frac{1}{E-\wti{E}_{q,j}}-\frac{1}{E^0_{q,j}-\wti{E}_{q,j}}\eeq
(note $f(E^0_{q,j})=0$). Fix some $E' \in (\wti{E}_{q,j},E)$. It
follows that \beq \no \log
\frac{\calD^q(E)}{\calD^q(E')}=\log\calD^q(E)-\log
\calD^q(E')=\int_{E'}^{E}f(x)dx > \log
\frac{E-\wti{E}_{q,j}}{E'-\wti{E}_{q,j}}-1. \eeq Thus \beq
\frac{\calD^q(E)}{\calD^q(E')}>\frac{1}{e}\frac{E-\wti{E}_{q,j}}{E'-\wti{E}_{q,j}},
\no \eeq which implies \beq \no
e\left|\frac{\calD^q(E)}{E-\wti{E}_{q,j}}\right|>\left|\frac{\calD^q(E')-\calD^q(\wti{E}_{q,j})}{E'-\wti{E}_{q,j}}
\right|. \eeq The estimate \eqref{derivative-upper-bound} follows
from this by taking the limit $E'\rightarrow \wti{E}_{q,j}$.

We now turn to prove the lower bound. First, to fix notation, define
the intervals \beq \label{Bqj}
B_{q,j}\equiv[\wti{E}^\ell_{q,j},\wti{E}^r_{q,j}]= \left
\{\begin{array}{cc}&[E_{q,j}(0), E_{q,j}(\pi)] \quad \textrm{if }
E_{q,j}(0)<E_{q,j}(\pi) \\ &[E_{q,j}(\pi),E_{q,j}(0)] \quad
\textrm{if } E_{q,j}(\pi)<E_{q,j}(0) \end{array} \right.\eeq so that
$b_{q,j}=|B_{q,j}|$. We shall refer to the $B_{q,j}$ as the `bands'.
Let further $B^\ell_{q,j}=[\wti{E}^\ell_{q,j},\wti{E}_{q,j}]$ and
$B^r_{q,j}=[\wti{E}_{q,j},\wti{E}^r_{q,j}]$ be the two parts of the
band $B_{q,j}$ and let $b^i_{q,j}=|B^i_{q,j}|$ ($i=\ell,r$).

Note that, as $(\calD^q)'$ is a $(q-1)$ degree polynomial with simple
zeros, $|(\calD^q)'|$ has a single maximum in each interval
$[E^0_{q,j}, E^0_{q,j+1}]$ ($1 \leq j \leq q-2$). Since, for $1 \leq
j \leq (q-2)$, $B_{q,j+1} \subseteq [E^0_{q,j}, E^0_{q,j+1}]$, it
follows that $|(\calD^q)'|$ has a single maximum in each interval
$B_{q,j}$ ($1 \leq j \leq q$). If this maximum is in $B^\ell_{q,j}$
then, by monotonicity $|(\calD^q)'(\wti{E}_{q,j})| \geq
|(\calD^q)'(E)|$ for all $E \in B^r_{q,j}$ from which it follows
that \beq \no \frac{2}{b^r_{q,j}}=
\frac{|\calD^q(\wti{E}_{q,j})-\calD^q(\wti{E}^r_{q,j})|}{|\wti{E}_{q,j}-\wti{E}^r_{q,j}|}
\leq |(\calD^q)'(\wti{E}_{q,j})|.\eeq Otherwise, the maximum is in
$B^r_{q,j}$ and we get \beq \no \frac{2}{b^\ell_{q,j}} \leq
|(\calD^q)'(\wti{E}_{q,j})|.\eeq

Assume that $2\leq j \leq (q-1)$, and that $\calD^q$ is increasing
on $B_{q,j}$ and consider the polynomial $g(E)=\calD^q(E)+2$ which
has a zero at $\wti{E}^\ell_{q,j}$. An analysis similar to the one
leading to \eqref{lower-f} shows that for $E \in
(\wti{E}^\ell_{q,j},E^0_{q,j})$ \beq \no
\frac{g'(E)}{g(E)}=\frac{d}{dE}\log
g(E)>\frac{1}{E-\wti{E}^\ell_{q,j}}-\frac{1}{E^0_{q,j}-\wti{E}^\ell_{q,j}}\eeq which
implies (putting $E=\wti{E}_{q,j}$) \beq \no
\frac{(\calD^q)'(\wti{E}_{q,j})}{2}=\frac{g'(\wti{E}_{q,j})}{g(\wti{E}_{q,j})}>\frac{1}{b^\ell_{q,j}}-\frac{1}{b^\ell_{q,j}+b^r_{q,j}}=\frac{b^r_{q,j}}{b^\ell_{q,j}(b^\ell_{q,j}+b^r_{q,j})}.
\eeq Letting $t_{q,j}=\frac{2}{|(\calD^q)'(\wti{E}_{q,j})|}$, it
follows that \beq \label{quadbl}
b^\ell_{q,j}>\frac{t_{q,j}b^r_{q,j}}{b^\ell_{q,j}+b^r_{q,j}}.\eeq

By considering the function $\calD^q(E)-2$ and performing a similar
analysis, we can get the same inequality with $\ell$ and $r$
interchanged: \beq \label{quadbr}
b^r_{q,j}>\frac{t_{q,j}b^\ell_{q,j}}{b^\ell_{q,j}+b^r_{q,j}}.\eeq

We showed above that either $b^r_{q,j} \geq t_{q,j}$ or
$b^\ell_{q,j} \geq t_{q,j}$. Assume $b^r_{q,j} \geq t_{q,j}$. Then
\eqref{quadbl} implies that
$(b^\ell_{q,j})^2+b^\ell_{q,j}t_{q,j}-(t_{q,j})^2 \geq 0$, from which
it follows that $b^\ell_{q,j} \geq \frac{\sqrt{5}-1}{2}t_{q,j}$.
Similarly, $b^\ell_{q,j} \geq t_{q,j}$ implies that $b^r_{q,j} \geq
\frac{\sqrt{5}-1}{2}t_{q,j}$ so that in any case we have \beq \no
b_{q,j}=b^\ell_{q,j}+b^r_{q,j} \geq \frac{\sqrt{5}+1}{2}t_{q,j}.
\eeq Thus \beq \label{lower-b} |(\calD^q)'(\wti{E}_{q,j})| \geq
\frac{\sqrt{5}+1}{b_{q,j}}, \eeq for any $2 \leq j \leq (q-1)$. For
the case of $j=1,q$ only one of the inequalities \eqref{quadbl} or
\eqref{quadbr} can be obtained, but by monotonicity it follows that
$b^r_{q,j} \geq t_{q,j}$ corresponds to the case where
\eqref{quadbl} holds and vice versa and so we get \eqref{lower-b}
for all $1 \leq j \leq q$, which concludes the proof.
\end{proof}

\begin{proof}[Proof of Theorem \ref{Thouless-width}]
Note first that by the first equality in \eqref{Dec-Discr}, $\inf_{E
\in \bbR} |\calD^q(E+i\varepsilon)|^2\geq \inf_{E \in
[\wti{E}_{q,1},\wti{E}_{q,q}]} |\calD^q(E+i\varepsilon)|^2$. By
Lemma \ref{derivative-bound} we see that for any $E \in
[\wti{E}_{q,1},\wti{E}_{q,q}]$ \beq \label{Thouless-bound}
\left|\frac{\calD^q(E)}{E-\wti{E}_{q,j(E)}}\right|^2 \geq \left(
\frac{\sqrt{5}+1}{e}\right)^2 b_{q, j(E)}^{-2}. \eeq

Combining \eqref{central-inequality}, \eqref{Dec-Discr} and
\eqref{Thouless-bound} (with $\varepsilon=1/T$), and noticing that
$\frac{\prod_{j \neq j(E)}|E+i\varepsilon-\wti{E}_{q,j}|^2}{\prod_{j
\neq j(E)}|E-\wti{E}_{q,j}|^2} \geq 1$, we get \beq \no
\begin{split} P(q,T) &\leq 4 T^6 \left(1+2 \parallel V
\parallel_\infty \right)^2 \sup_{E \in [\wti{E}_{q,1},\wti{E}_{q,q}]}
\left(\left|\frac{\calD^q(E)}{E-\wti{E}_{q,j(E)}}\right|^{-2}\right) \\
& \leq \frac{4e^2}{(\sqrt{5}+1)^2}T^6 \left(1+2 \parallel V
\parallel_\infty \right)^2 \sup_{E \in [\wti{E}_{q,1},\wti{E}_{q,q}]}b_{q, j(E)}^2 \\
& = \frac{4e^2}{(\sqrt{5}+1)^2}T^6 \left(1+2 \parallel V
\parallel_\infty \right)^2 \left(\sup_{j}b_{q, j} \right)^2.
\end{split}
\eeq
\end{proof}

\section{Proof of Theorems \ref{clustering} and \ref{th-1.8}}

We present in this section more refined lower bounds for the
polynomials $\calD$ evaluated at a distance $1/T$ from the real
line. In particular, we examine the consequences of clustering of
the zeros of these polynomials. 

The bounds developed here are for fairly general polynomials, and we believe they may be interesting 
in other contexts as well. For this reason we depart from $\calD$ for most of the analysis and present 
our results for general polynomials (under the assumptions described below). We return to $\calD$ for the 
proofs of Theorems \ref{clustering} and \ref{th-1.8}. 

To fix notation, let $Q$ be a monic polynomial of degree $q$ with
real and simple zeros. Denote the set of zeros of $Q$ by
$Z(Q)=\{z_1, \ldots , z_q\}$. As in the previous section (see the
discussion following \eqref{Disc-form}), any point $E \in \bbR$ lies
between two extremal points of $Q$ or between an extremal point and
$\pm \infty$. Any such interval contains a unique zero of $Q$. In
this way we associate with each point $E \in \bbR$ a unique zero
$z(E)$ of $Q$. Our first lemma illustrates why
clustering of zeros implies lower bounds away from $\bbR$.

\begin{definition} \label{covered-zeros}
Let $Q$ be a polynomial of degree $q$ and let $\varepsilon>0$. We
shall say that $Q$ is {\it $\varepsilon$-covered} if $Q$ is monic
with real and simple zeros, and $Z(Q)$ is covered by a finite
collection $U_\varepsilon(Q)=\{I_j\}_{j=1}^k$ ($k \leq q$) of
disjoint closed intervals of size not exceeding $\varepsilon$ such
that every $I_j$ contains, in addition to at least one point of
$Z(Q)$, a point $x$ for which $|Q(x)|=2$. When we want to be
explicit about the family of covering intervals, we shall say that
$Q$ is $\varepsilon$-covered by $U_\varepsilon(Q)=\{I_j\}_{j=1}^k$.
\end{definition}

\begin{remark*}
In the analysis below, $|Q(x)|=2$ is not essential. With obvious modifications, it can be carried out just as well 
with any other constant. Since the relevant constant for the applications is $2$, we use it here.
\end{remark*}

Let $Q$ be an $\varepsilon$-covered polynomial and let $\{x_j\}$ be the points where 
$|Q(x)|=2$. Let $\ti{b}_{Q,j}=|x_j-z(x_j)|$ and $\ti{b}_Q=\sup_j\{ \ti{b}_{Q,j}\}$.
For $E \in \bbR$, let $I_E$ be an element of $U_\varepsilon(Q)$ containing $z(E)$. 
Finally, let $d(E)=|E-z(E)|$. We have the following lemma:

\begin{lemma}\label{lemma-3.1}
Let $\varepsilon>0$ and assume $Q$ is an $\varepsilon$-covered
polynomial. Let \beq \no A^\varepsilon=\{E \in \bbR \mid d(E) \leq
8\varepsilon \} \eeq and let \beq \no B^\varepsilon= \bbR \setminus
A^\varepsilon. \eeq

Then for $E \in A^\varepsilon$ we have \beq \label{3.1}
|Q(E+i\varepsilon)|^2 \geq \left|\frac{\varepsilon Q'(z(E))}{e}
\right|^2\left(1+\frac{1}{81} \right)^{\#\left(Z(Q)\cap I_E\right)},
\eeq and for $E \in B^\varepsilon$ \beq \label{3.2}
|Q(E+i\varepsilon)|^2 \geq \left( \frac{\varepsilon}{ \ti{b}_Q}\right)^2 9^{\# \left(Z(Q)\cap I_E
\right)}\left(\frac{1}{4}\right)^{q}.\eeq
\end{lemma}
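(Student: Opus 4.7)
Both estimates rest on the factorization
\[|Q(E+i\varepsilon)|^2 = \prod_{j=1}^q \left[(E-z_j)^2 + \varepsilon^2\right],\]
and the dichotomy between $A^\varepsilon$ and $B^\varepsilon$ reflects whether the natural anchor point for the bound is the zero $z(E)$ (enabling a Lemma~\ref{derivative-bound}-style derivative estimate) or the reference point $x_{I_E}\in I_E$ at which $|Q(x_{I_E})|=2$.

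For $E\in A^\varepsilon$ I would extract the factor $|E+i\varepsilon-z(E)|^2\geq\varepsilon^2$ and rewrite the remainder as
\[\prod_{j\neq j(E)}\!\left[(E-z_j)^2+\varepsilon^2\right]=\left|\frac{Q(E)}{E-z(E)}\right|^2\prod_{j\neq j(E)}\!\left[1+\frac{\varepsilon^2}{(E-z_j)^2}\right].\]
The first factor is bounded below by $|Q'(z(E))|^2/e^2$ via the argument used for the upper bound in Lemma~\ref{derivative-bound}: that argument uses only that $Q$ is monic with simple real zeros, and so applies verbatim. In the second factor, the terms with $z_j\notin I_E$ are $\geq 1$; for the surviving $z_j\in I_E$ the bound $|E-z_j|\leq|E-z(E)|+|z(E)-z_j|\leq 8\varepsilon+\varepsilon=9\varepsilon$ yields $1+\varepsilon^2/(E-z_j)^2\geq 1+1/81$. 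Combining gives~\eqref{3.1}.

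For $E\in B^\varepsilon$ the derivative estimate is useless because $E$ is far from $z(E)$. Instead I would anchor at $x_{I_E}$ and use $|Q(x_{I_E})|=2$:
\[|Q(E+i\varepsilon)|^2 = 4\prod_{j=1}^q\frac{(E-z_j)^2+\varepsilon^2}{(x_{I_E}-z_j)^2},\]
estimating each per-zero ratio by the location of $z_j$. For $z_j=z(x_{I_E})$, $(x_{I_E}-z_j)^2\leq\tilde{b}_Q^2$ by the very definition of $\tilde{b}_Q$, which produces the $(\varepsilon/\tilde{b}_Q)^2$ factor. For the remaining $z_j\in I_E$, $(x_{I_E}-z_j)^2\leq\varepsilon^2$ while $(E-z_j)^2\geq(d(E)-\varepsilon)^2>49\varepsilon^2$, so each such ratio is well above $9$. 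For $z_j\notin I_E$, the triangle inequality in the squared form $|x_{I_E}-z_j|^2\leq 2|x_{I_E}-(E+i\varepsilon)|^2+2|(E+i\varepsilon)-z_j|^2$ gives a ratio $\geq 1/4$ whenever $z_j$ is far from $x_{I_E}$ relative to $|x_{I_E}-(E+i\varepsilon)|$; the remaining ``nearby'' outside zeros are handled by a universal crude bound, collected into the $(1/4)^q$ safety factor.

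The delicate step---and the main obstacle---is this last one: for outside zeros $z_j$ satisfying $|x_{I_E}-z_j|<2|x_{I_E}-(E+i\varepsilon)|$, the per-zero ratio can shrink as low as $\varepsilon^2/\bigl(2|x_{I_E}-E|\bigr)^2$, well below $1/4$, so no naive triangle estimate suffices. Showing that the combined contribution of all such problematic factors is never worse than $(1/4)^q$---so that~\eqref{3.2} holds universally---is the technical heart of the proof, likely requiring a careful count that uses the disjointness of the cover.
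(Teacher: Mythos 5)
Your argument for \eqref{3.1} is correct and is essentially the paper's: extract $\varepsilon^2$ from the $z(E)$ factor, bound $|Q(E)/(E-z(E))|$ below by $|Q'(z(E))|/e$ via the Lemma~\ref{derivative-bound} argument, and use $|E-z_j|\leq 9\varepsilon$ for the other zeros in $I_E$. The problem is \eqref{3.2}, and it is exactly the one you flag yourself. In your decomposition anchored at $x_{I_E}$, a zero $z_j\notin I_E$ lying just to the right of $E$ contributes a factor as small as $\varepsilon^2/(E-x_{I_E})^2$, and since $d(E)$ has no upper bound on $B^\varepsilon$ this is not bounded below by any universal constant. No per-factor estimate, and no counting argument based on disjointness of the cover, will recover a uniform $(1/4)^q$ here; the obstacle is metric, not combinatorial, so the proposal as written does not close.

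The missing idea is a monotonicity step that removes the large distance $E-x_{I_E}$ \emph{before} any product comparison is made. Write $\hat E=x_{I_E}$ and assume $E>z(E)$. Two structural facts are available: (a) the interval between consecutive critical points of $Q$ containing $E$ contains only the zero $z(E)$, so $|Q|$ is monotone on $[z(E),E]$ and hence $|Q(E+i\varepsilon)|\geq|Q(E)|\geq|Q(y)|$ for every $y\in(z(E),E)$; (b) consequently there are no zeros in $(z(E),E]$, and any zero between $\hat E$ and $z(E)$ lies in $I_E$, so every zero outside $I_E$ and to the right of $\hat E$ is in fact to the right of $E$, i.e., at distance greater than $7\varepsilon$ from $\hat E$. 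Taking $y=\hat E+4\varepsilon$, which lies in $(z(E),E)$ because $d(E)>8\varepsilon$ and $|I_E|\leq\varepsilon$, one compares $Q(\hat E+4\varepsilon)$ with $Q(\hat E)$, a displacement of only $4\varepsilon$: the anchor zero contributes $3\varepsilon/\ti{b}_{Q,j}$, each other zero of $I_E$ contributes a ratio at least $3$ (giving $9^{\#(Z(Q)\cap I_E)}$ after squaring), zeros to the left contribute ratios at least $1$, and each far zero contributes $1-4\varepsilon/|z_j-\hat E|$, which is bounded below by an absolute constant close to $1/2$ --- this, and not a count of problematic zeros, is where the $(1/4)^q$ safety factor comes from.
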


\begin{proof}
We begin with \eqref{3.1}. Let $E \in A^\varepsilon$ and write, as
in \eqref{Dec-Discr}, \beq \no |Q(E+i\varepsilon)|^2 \geq
\varepsilon^2 \left|\frac{Q(E)}{E-z(E)}\right|^2\frac{\prod_{z_j
\neq z(E)}(E-z_j)^2+\varepsilon^2}{\prod_{z_j \neq z(E)}(E-z_j)^2}.
\eeq

The argument for the upper bound in \eqref{derivative-up-low-bound}
translates into this more general setting and we get \beq \no
\begin{split} |Q(E+i\varepsilon)|^2 & \geq \left| \frac{\varepsilon
Q'(z(E))}{e} \right|^2 \prod_{z_j \neq
z(E)}\frac{(E-z_j)^2+\varepsilon^2}{(E-z_j)^2} \\
&\geq \left| \frac{\varepsilon Q'(z(E))}{e} \right|^2 \prod_{z_j
\neq z(E),\ z_j \in I_E}\frac{(E-z_j)^2+\varepsilon^2}{(E-z_j)^2} \\
& \geq \left| \frac{\varepsilon Q'(z(E))}{e} \right|^2
\left(1+\frac{\varepsilon^2}{\left(\varepsilon+8\varepsilon
\right)^2} \right)^{\#Z(Q) \cap I_E}.
\end{split} \eeq
Equation \eqref{3.1} follows.

To prove \eqref{3.2}, fix $E \in B^\varepsilon$ and assume $E>z(E)$
(with obvious changes, everything works similarly for $E<z(E)$).
Now, since $Z(Q) \subseteq \bbR$, $|Q(E+i\varepsilon)|\geq |Q(E)|$.
Next, it follows from the definition of $z(E)$ that for any
$z(E)<y<E$, $|Q(y)|\leq |Q(E)|$. Let $\hat{E} \in I_E$ be a point
for which $|Q(\hat{E})|=2$. Clearly
$z(E)<\hat{E}+4\varepsilon<E$. Therefore, \beq \no \left|
Q(E+i\varepsilon) \right| \geq |Q(E)| \geq
|Q(\hat{E}+4\varepsilon)|. \eeq

Now \beq \no \begin{split} |Q(\hat{E}+4\varepsilon)| & \geq
\frac{|Q(\hat{E}+4\varepsilon)|}{|Q(\hat{E})|} = \\
&\left| \frac{\hat{E}+4\varepsilon-z(\hat{E})}{\hat{E}-z(\hat{E})}\right|
\prod_{z_j \in I_E,\ z_j\neq z(\hat{E})}
\left|\frac{\hat{E}+4\varepsilon -z_j}{\hat{E}-z_j} \right|
\prod_{z_j \notin I_E}
\left|\frac{\hat{E}+4\varepsilon-z_j}{\hat{E}-z_j} \right| \\
& \geq \frac{3 \varepsilon}{\ti{b}_{Q,j}}3^{\#\left(Z(Q)\cap I_E \right)-1}\prod_{z_j \notin I_E}
\left|\frac{\hat{E}+4\varepsilon-z_j}{\hat{E}-z_j} \right|,
\end{split} \eeq
since for any $y \in I_E$ $|y-\hat{E}| \leq \varepsilon$. As for the
rest of the elements of $Z(Q)$, it is clear that those of them that
are located to the left of $\hat{E}$ contribute a factor that is
greater than 1 to the right hand side. So we shall assume they are
all located to the right of $\hat{E}$. Since those that are in $I_E$
were already taken into account, it follows that we are considering
only roots that are to the right of $E$. Thus we may assume that
$|z_j-\hat{E}|>8\varepsilon$. Plugging this into the right hand side
of the last inequality and remembering that $\#Z(Q)=q$, \eqref{3.2}
follows.
\end{proof}

The estimate \eqref{3.1} implies that 'crowding together' of many
zeros of $Q$ in $I_E$'s leads to an exponential lower bound on
$|Q(E+i\varepsilon)|$ for appropriate $E$'s. If not all the zeros
are covered by a single interval of size $\varepsilon$, the estimate
\eqref{3.2} might not be useful. The following modification is
tailored to deal with this problem.

\begin{lemma}\label{lemma-3.2}
Let $1/5>\varepsilon>0$. Assume $Q$ is an $\varepsilon$-covered
polynomial and let $0<\varphi<1$ be such that
$\varepsilon^{\varphi-1}>5$. Let \beq \no A^\varphi=\{E \in \bbR
\mid d(E) \leq \varepsilon^\varphi \} \eeq and let \beq \no
B^\varphi= \bbR \setminus A^\varphi. \eeq

Then for $E \in A^\varphi$ we have \beq \label{3.3}
|Q(E+i\varepsilon)|^2 \geq \left|\frac{\varepsilon Q'(z(E))}{e}
\right|^2\left(1+\frac{\varepsilon^{2-2\varphi}}{4}
\right)^{\#\left(Z(Q)\cap I_E\right)}, \eeq and for $E \in
B^\varphi$ \beq \label{3.4} |Q(E+i\varepsilon)|^2 \geq \left(\frac{\varepsilon}{\ti{b}_Q} \right)^2 9^{\#
\left(Z(Q)\cap I_E \right)}(1-4\varepsilon^{1-\varphi})^{2q}.\eeq
\end{lemma}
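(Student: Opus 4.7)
The plan is to repeat the proof of Lemma \ref{lemma-3.1} almost verbatim, with the cutoff $8\varepsilon$ replaced by the larger quantity $\varepsilon^\varphi$. The hypothesis $\varepsilon^{\varphi-1}>5$ is precisely what ensures $\varepsilon^\varphi > 5\varepsilon$, so that the same shift by $4\varepsilon$ used in Lemma \ref{lemma-3.1} still lies strictly between $z(E)$ and $E$ in the $B^\varphi$ case, and so that the triangle inequality in the $A^\varphi$ case keeps zeros inside $I_E$ within a small multiple of $\varepsilon^\varphi$ from $E$.

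For $E \in A^\varphi$, I start from the factorization identical to \eqref{Dec-Discr} (with $\calD^q$ replaced by $Q$ and $\wti{E}_{q,j}$ by $z_j$) and apply the upper-bound half of Lemma \ref{derivative-bound}, which generalizes verbatim to any monic polynomial with simple real zeros, to obtain $|Q(E)/(E-z(E))| \geq |Q'(z(E))|/e$. Restricting the remaining product $\prod_{z_j\neq z(E)}(1+\varepsilon^2/(E-z_j)^2)$ to $z_j \in I_E\setminus\{z(E)\}$, the estimate $|E-z_j| \leq d(E)+|I_E| \leq \varepsilon^\varphi + \varepsilon \leq 2\varepsilon^\varphi$ (using $\varphi<1$) makes each such factor at least $1+\varepsilon^{2-2\varphi}/4$, yielding \eqref{3.3}.

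For $E \in B^\varphi$, assume WLOG $E>z(E)$. Monotonicity of $|Q|$ on the extremal interval containing $z(E)$ and $E$ yields $|Q(E+i\varepsilon)|\geq |Q(E)|\geq |Q(\hat{E}+4\varepsilon)|$ for any $\hat{E}\in I_E$ with $|Q(\hat{E})|=2$; the containment $\hat{E}+4\varepsilon\in(z(E),E)$ is ensured by $|\hat{E}-z(E)|\leq\varepsilon$ together with $E-z(E)>\varepsilon^\varphi > 5\varepsilon$. Using $|Q(\hat{E})|=2\geq 1$, I write $|Q(\hat{E}+4\varepsilon)|\geq \prod_j |(\hat{E}+4\varepsilon-z_j)/(\hat{E}-z_j)|$ and estimate each ratio exactly as in Lemma \ref{lemma-3.1}: the factor at $z(\hat{E})$ contributes at least $3\varepsilon/\ti{b}_Q$; the other $z_j\in I_E$ contribute at least $3$ (to the right of $\hat{E}$) or at least $1$ (to the left); zeros outside $I_E$ lying to the left of $\hat{E}$ give factor $\geq 1$. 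The only genuinely new estimate is for zeros outside $I_E$ lying to the right of $\hat{E}$: the extremal-interval structure forces $z_j>E$, hence $|z_j-\hat{E}|>\varepsilon^\varphi-\varepsilon$, and the ratio is at least $1-4\varepsilon/(\varepsilon^\varphi-\varepsilon)$. Squaring the assembled product produces \eqref{3.4}.

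The main (minor) obstacle is matching the precise constant in the outside-$I_E$ contribution: the direct estimate gives $1-4\varepsilon/(\varepsilon^\varphi-\varepsilon) = (1-5\varepsilon^{1-\varphi})/(1-\varepsilon^{1-\varphi})$, whereas \eqref{3.4} asks for $1-4\varepsilon^{1-\varphi}$. Since the hypothesis $\varepsilon^{1-\varphi}<1/5$ keeps $1/(1-\varepsilon^{1-\varphi})$ below $5/4$, this discrepancy is small and can be closed either by a marginal recalibration (e.g., choosing, when available, the $|Q|=2$-point $\hat{E}\in I_E$ lying to the left of $z(E)$, so that $|z_j-\hat{E}|>\varepsilon^\varphi$ directly, giving ratio $\geq 1-4\varepsilon^{1-\varphi}$) or by absorbing a multiplicative constant into the shift $4\varepsilon$; this is the single place in the argument where the size assumption on $\varepsilon^{\varphi-1}$ is used nontrivially.
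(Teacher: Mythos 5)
Your proposal is correct and follows essentially the same route as the paper: repeat the proof of Lemma \ref{lemma-3.1} with the cutoff $8\varepsilon$ replaced by $\varepsilon^\varphi$, using $\varepsilon^{\varphi-1}>5$ to keep $\hat{E}+4\varepsilon$ strictly between $z(E)$ and $E$ and to control the zeros lying to the right of $E$. The constant discrepancy you flag at the end ($\varepsilon^\varphi-\varepsilon$ versus $\varepsilon^\varphi$) is precisely the slippage the paper itself silently absorbs, and note that every $z_j\in I_E$ is within $\varepsilon$ of $\hat{E}$, so the ratio $|\hat{E}+4\varepsilon-z_j|/|\hat{E}-z_j|$ is at least $3$ on \emph{both} sides of $\hat{E}$ (at least $5$ on the left), which is what delivers the full factor $9^{\#\left(Z(Q)\cap I_E\right)}$ rather than only counting the zeros to the right.
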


\begin{proof}
To prove \eqref{3.3} we simply repeat the proof of \eqref{3.1} to
obtain \beq \no
\begin{split} |Q(E+i\varepsilon)|^2 & \geq \left| \frac{\varepsilon
Q'(z(E))}{e} \right|^2 \prod_{z_j \neq
z(E)}\frac{(E-z_j)^2+\varepsilon^2}{(E-z_j)^2} \\
&\geq \left| \frac{\varepsilon Q'(z(E))}{e} \right|^2 \prod_{z_j
\neq z(E),\ z_j \in I_E}\frac{(E-z_j)^2+\varepsilon^2}{(E-z_j)^2} \\
& \geq \left| \frac{\varepsilon Q'(z(E))}{e} \right|^2
\left(1+\frac{\varepsilon^2}{\left(\varepsilon+\varepsilon^\varphi
\right)^2} \right)^{\#Z(Q) \cap I_E},
\end{split} \eeq
which implies \eqref{3.3} since $\varphi<1$.

To prove \eqref{3.4}, fix $E \in B^\varphi$ and assume $E>z(E)$
(again, obvious changes should be made for $E<z(E)$). As before,
let $\hat{E} \in I_E$ be a point for which $|Q(\hat{E})|=2$.
Since $\varepsilon^{\varphi}>5\varepsilon$,
$z(E)<\hat{E}+4\varepsilon<E$. Thus \beq \no \left|
Q(E+i\varepsilon) \right| \geq |Q(E)| \geq
|Q(\hat{E}+4\varepsilon)|, \eeq and as before \beq \no
\begin{split} |Q(\hat{E}+4\varepsilon)| \geq \frac{\varepsilon}{\ti{b}_{Q,j}} 3^{\#\left(Z(Q)\cap I_E \right)}\prod_{z_j \notin I_E}
\left|\frac{\hat{E}+4\varepsilon-z_j}{\hat{E}-z_j} \right|.
\end{split} \eeq
As in the proof of \eqref{3.2}, assuming that $z_j \notin I_E$
satisfy $|z_j-\hat{E}|>|E-\hat{E}|\geq\varepsilon^\varphi$ and
remembering that $\#Z(Q)=q$, we get \eqref{3.4}.
\end{proof}

\begin{definition} \label{alphaxi-clustered}
Let $0<\varepsilon<1$ and $0<\xi \leq 1$. We say that the polynomial
$Q$ is {\it $(\varepsilon,\xi)$-clustered} if $Q$ is
$\varepsilon$-covered by some $U_\varepsilon(Q)=\{I_j\}_{j=1}^k$, so
that for any $I_j \in U_\varepsilon(Q)$, $\# \left(Z(Q)\cap I_j
\right) \geq q^\xi$. When we want to be explicit about the covering
set, we shall say that $Q$ is {\it $(\varepsilon,\xi)$-clustered} by
$U=\{I_j\}_{j=1}^k$.
\end{definition}

\begin{lemma} \label{lemma-3.3}
Let $0<\alpha<1$ and $2/3< \xi \leq 1$. Then there exist
$\delta=\delta(\alpha,\xi)>0$, $q_0=q_0(\alpha,\xi,\delta)>0$ and a
universal constant $C>0$ such that any
$(q^{-1/\alpha},\xi)$-clustered polynomial $Q$ with $q \equiv
\textrm{deg}(Q) \geq q_0$ satisfies

\beq \label{3.5} \inf_{E \in \bbR}|Q(E+i q^{-1/\alpha})|^2 \geq
\frac{1}{e^2} \left( \min \left \{\min_{z \in Z(Q)} |Q'(z)|, \left(\ti{b}_Q \right)^{-1} \right\} \right)^2 e^{C
q^\delta}\eeq

\end{lemma}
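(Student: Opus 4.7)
The plan is to apply Lemma \ref{lemma-3.2} with $\varepsilon = q^{-1/\alpha}$ and to choose the auxiliary cutoff exponent $\varphi \in (0,1)$ so that, on both $A^\varphi$ and $B^\varphi$, the clustering hypothesis $\#(Z(Q)\cap I_E)\geq q^\xi$ upgrades the bounds \eqref{3.3} and \eqref{3.4} to lower bounds of the form $\exp(c q^{\delta})$. In essence, the whole lemma reduces to locating a $\varphi$ that simultaneously keeps the growth factor in \eqref{3.3} super-polynomial and prevents the loss factor $(1 - 4\varepsilon^{1-\varphi})^{2q}$ in \eqref{3.4} from overwhelming the gain $9^{\#(Z(Q)\cap I_E)}$.

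First I would pin down the admissible range of $\varphi$. Using $\log(1+x)\geq x/2$ for small $x$, the factor $(1 + \varepsilon^{2-2\varphi}/4)^{q^\xi}$ coming from \eqref{3.3} is bounded below by $\exp(q^{\xi - (2-2\varphi)/\alpha}/8)$, which is genuinely super-polynomial precisely when $\varphi > 1 - \xi\alpha/2$. Symmetrically, $\log(1-x)\geq -2x$ for $x\leq 1/2$ turns the loss factor in \eqref{3.4} into at least $\exp(-16\, q^{1 - (1-\varphi)/\alpha})$, and combining this with the clustering gain $9^{q^\xi}$ yields a lower bound of at least $\exp(\tfrac{1}{2}(\log 9)\, q^\xi)$ provided $\varphi < 1 - \alpha(1 - \xi)$. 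The two inequalities on $\varphi$ close up exactly at $\xi\alpha/2 = \alpha(1-\xi)$, i.e.\ at $\xi = 2/3$; under the standing hypothesis $\xi > 2/3$ the open interval $(1 - \xi\alpha/2,\ 1 - \alpha(1-\xi))$ is nonempty, so I would fix any $\varphi$ inside it and set $\delta = \tfrac{1}{2}\min\{\xi - (2-2\varphi)/\alpha,\ \xi - 1 + (1-\varphi)/\alpha\} > 0$.

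With this $\varphi$ in hand, I would then assemble the two cases. On $A^\varphi$, \eqref{3.3} combined with the clustering bound yields $|Q(E+i\varepsilon)|^2 \geq e^{-2}\varepsilon^2|Q'(z(E))|^2\exp(c_1 q^{2\delta})$, while on $B^\varphi$, \eqref{3.4} similarly gives $|Q(E+i\varepsilon)|^2 \geq \varepsilon^2 \ti{b}_Q^{-2}\exp(c_2 q^{2\delta})$. Taking the infimum over $E\in\bbR$ and using $|Q'(z(E))|\geq \min_{z\in Z(Q)}|Q'(z)|$ produces the desired bound up to the polynomial prefactor $\varepsilon^2 = q^{-2/\alpha}$. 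Since $\exp(c q^{2\delta})$ dominates any polynomial in $q$, choosing $q_0 = q_0(\alpha,\xi,\delta)$ large enough so that $q^{-2/\alpha}\exp(c q^{2\delta}) \geq \exp(C q^{\delta})$ for a universal $C$ whenever $q\geq q_0$ absorbs the polynomial loss and yields \eqref{3.5}.

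The crux of the argument is the balancing in the second paragraph: the two constraints on $\varphi$ close up exactly at $\xi = 2/3$, which is why this value appears as the threshold in the hypothesis, and I expect this balancing to be the main obstacle. Once $\varphi$ is fixed, the rest amounts to routine bookkeeping. A mild technical check needed along the way is that the conditions $\varepsilon < 1/5$ and $\varepsilon^{\varphi-1} > 5$ required to invoke Lemma \ref{lemma-3.2} hold for all $q\geq q_0$, which is automatic once $q_0$ is taken sufficiently large.
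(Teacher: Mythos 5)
Your proposal is correct and follows essentially the same route as the paper: both apply Lemma \ref{lemma-3.2} with $\varepsilon=q^{-1/\alpha}$ and a cutoff exponent $\varphi$ chosen to balance the gain on $A^\varphi$ against the loss factor $(1-4\varepsilon^{1-\varphi})^{2q}$ on $B^\varphi$, and your admissibility window $(1-\xi\alpha/2,\,1-\alpha(1-\xi))$ is exactly the reparametrization of the paper's choice $\varphi=1-\frac{\alpha\xi}{2}+\alpha\delta$ with $\delta<\frac{3}{2}\xi-1$. The only cosmetic difference is that the paper treats $\xi=1$ separately via Lemma \ref{lemma-3.1}, whereas your argument covers that case uniformly.
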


\begin{proof}
If $\xi=1$ then by assumption there exists a single interval of size
$q^{-1/\alpha}$ containing all the zeros of $Q$. It then follows
from Lemma \ref{lemma-3.1} that \beq \no 
\begin{split} & |Q(E+iq^{-1/\alpha})|^2 \\
& \geq \min \left( \left( \frac{q^{-1/\alpha} \min_{z \in Z(Q)}
|Q'(z)|}{e} \right)^2\left(\frac{82}{81} \right)^q,\ \left(\frac{q^{-1/\alpha}}{\ti{b}_Q} \right)^2
\left( 
\frac{9}{4}\right)^{q} \right). \end{split}\eeq It follows that \eqref{3.5}
holds with any $\delta<1$ for sufficiently large $q$.

Assume now that $1>\xi>2/3$ and let $\varepsilon \equiv
q^{-1/\alpha}$. Pick $\delta>0$ so that $\delta<\min \left(
\frac{3}{2}\xi-1, 1-\xi \right)$. It follows that \beq \no
0<\frac{3}{2}\xi-1<\frac{\xi}{2}-\delta<1. \eeq

Let $\varphi=1-\frac{\alpha \xi}{2}+\alpha \delta$. Then \beq \no
0<\varphi<1 \eeq and, for $q$ sufficiently large, \beq \no
\varepsilon^{\varphi-1}=q^{1/\alpha-\varphi/\alpha}=
q^{\frac{\xi}{2}-\delta}>q^{\frac{3\xi}{2}-1}>5. \eeq

By Lemma \ref{lemma-3.2} we have, for  $E \in A^\varphi$, \beq \no
\begin{split}
|Q(E+i\varepsilon)|^2 \geq \left(\frac{q^{-1/\alpha} \min_{z \in
Z(Q)} |Q'(z)|}{e}
\right)^2\left(1+\frac{1}{4q^{(2-2\varphi)/\alpha}} \right)^{q^\xi}.
\end{split}
\eeq Since $\left(1+\frac{1}{4q^{(2-2\varphi)/\alpha}}\right)<2$ we
may use $\log(1+x)\geq \frac{x}{1+x}$ to obtain \beq \no
\begin{split}
|Q(E+i\varepsilon)|^2 &\geq \left(\frac{q^{-1/\alpha} \min_{z \in
Z(Q)} |Q'(z)|}{e}
\right)^2\left(e^{1/8}\right)^{q^{\left( \xi+2\varphi/\alpha-2/\alpha \right)}} \\
&=\left(\frac{q^{-1/\alpha} \min_{z \in Z(Q)} |Q'(z)|}{e} \right)^2
e^{\frac{q^{2\delta}}{8}}.
\end{split}
\eeq

In case $E \in B^\varphi$ we have from Lemma \ref{lemma-3.2}

\beq \no |Q(E+i\varepsilon)|^2 \geq \left( \frac{q^{-1/\alpha}}{\ti{b}_Q} \right)^2
9^{q^\xi}(1-4q^{\varphi/\alpha-1/\alpha})^{2q}=
\left( \frac{q^{-1/\alpha}}{\ti{b}_Q} \right)^2
9^{q^\xi}(1-4q^{\delta-\xi/2})^{2q}.\eeq Again, by $\log(1+x)\geq
\frac{x}{1+x}$ and for $q$ sufficiently large \beq \no \begin{split}
|Q(E+i\varepsilon)|^2 & \geq
\left( \frac{q^{-1/\alpha}}{\ti{b}_Q} \right)^2
9^{q^\xi}e^{\frac{-8q^{1+\delta-\xi/2}}{(1-4q^{\delta-\xi/2})}} \\
& \geq \left( \frac{q^{-1/\alpha}}{\ti{b}_Q} \right)^2
e^{q^\xi}e^{-16q^{1+\delta-\xi/2}} \\
& \geq \left( \frac{q^{-1/\alpha}}{\ti{b}_Q} \right)^2
 e^{q^\xi
\left(1-16q^{1+\delta-3\xi/2} \right) } \geq \left( \frac{q^{-1/\alpha}}{\ti{b}_Q} \right)^2
 e^{\frac{q^\xi}{2}},
\end{split} \eeq
since $1+\delta-3\xi/2<0$.

Thus, since $\delta<\xi$, we see that for $q$ sufficiently large
\beq \no |Q(E+iq^{-1/\alpha})|^2 \geq \frac{1}{e^2} \left( \min
\left\{\min_{z \in Z(Q)} |Q'(z)|, \left( \ti{b}_Q \right)^{-1} \right \} \right)^2 e^{\frac{q^\delta}{8}}
\eeq and we are done.
\end{proof}

\begin{proof}[Proof of Theorem \ref{clustering}]
By Lemma \ref{first-lemma}, \beq \no P(q,T) \leq 4 T^4 \left(1+2
\parallel V
\parallel_\infty \right)^2 \inf_{E \in \bbR} \left |\calD^q(E+i/T)\right|^{-2}. \eeq
Since $T^{-1} \geq q^{-1/\alpha}$, $\left| \calD^q(E+i/T) \right|
\geq \left| \calD^q(E+iq^{-1/\alpha}) \right|$. By assumption, there
exists a collection $U(\calD^q)=\{I_1,\ldots,I_k\}$ of disjoint intervals,
each of size not exceeding $q^{-1/\alpha}$ such that each one of
these intervals contains at least $q^\xi$ zeros of $\calD^q$. Since
$q^\xi\geq 2$ for $q$ large enough, we see from property 4 of
$\calD^q$ (quoted at the beginning of Section 2) that $\calD^q$ is
$(q^{-1/\alpha},\xi)$-clustered. Therefore, by Lemma
\ref{lemma-3.3}, we see that \beq \label{3.} \inf_{E \in
\bbR}|\calD^q(E+i q^{-1/\alpha})|^2 \geq \frac{1}{e^2} \left(
\min\left\{\min_{j} |(\calD^q)'(\wti{E}_{q,j})|, \left(\ti{b}_{\calD^q} \right)^{-1} \right \} \right)^2 e^{C q^\delta},\eeq
where $C$ is universal and $\delta$ depends on $\alpha$ and $\xi$.
Now, clearly $\ti{b}_{\calD^q,j}\leq b_{q,j}$ so obviously $\ti{b}_Q \leq \sup_{1 \leq j \leq q}b_{q,j}$.
By Lemma \ref{derivative-bound}, this implies that \beq \no
\begin{split} P(q,T) \leq
4e^2 T^4 \left(1+2\parallel V\parallel_\infty
\right)^2 \sup_{1\leq j \leq q}\left(b_{q,j}\right)^2 e^{-C q^\delta}.
\end{split} \eeq
This is \eqref{cluster-bound}.
\end{proof}

A serious limitation of Theorem \ref{clustering} is the assumed
lower bound on the clustering strength, that is, the requirement
that $\xi>2/3$. The reason for this requirement is the fact that one
needs to overcome an exponentially decreasing factor in the estimate
of $|Q(E+i\varepsilon)|$ for $E \in B^\varphi$ (see \eqref{3.2} and
\eqref{3.4}). The issue here is the absence of a lower bound on the
distance of the zeros of $Q$ that are to the right of $E$. Such a
lower bound, however, could be obtained by assuming an upper bound
on the clustering strength and a lower bound on the cluster sizes.
The following lemma shows that the existence of such bounds
simultaneously on different scales allows us to consider any
$\xi>0$. We first recall Definition \ref{def-1.4} in the context of
general polynomials.

\begin{definition} \label{def-3.6}
We say that a sequence of polynomials, $\{Q_\ell\}_{\ell=1}^\infty$,
with $\textrm{deg}(Q_\ell) \equiv q_\ell \underset{l \rightarrow
\infty}{\rightarrow}\infty$, is \emph{uniformly clustered} if
$Q_\ell$ is $\{q_\ell^{-1/\alpha_\ell}, \xi_\ell \}$-clustered by
$U_\ell=\{I_j^\ell \}_{j=1}^{k_\ell}$ and the following hold:
\\
(i) If $\ell_1<\ell_2$ then $q_{\ell_1}^{-1/\alpha_{\ell_1}}>q_{\ell_2}^{-1/\alpha_{\ell_2}}$. \\
(ii) There exists $\mu \geq 1$ and a constant $C_1>0$ so that \beq
\no \inf_{1 \leq j \leq k_\ell} |I^\ell_j| \geq C_1
q_\ell^{-\mu/\alpha_\ell}.
\eeq \\
(iii) There exists $\delta>0$ so that $ \delta < \xi_\ell
<(1-\delta)$,
and $\delta< \alpha_\ell$ for all $\ell$. \\
(iv) If we define ${\bar{\xi}_\ell}$ by $\sup_{1 \leq j \leq k_\ell}
\# \left(Z(Q_\ell)\cap I_j^\ell \right) \leq q_\ell^{\bar{\xi}_\ell}$,
then there exists a constant $C_2$ so that \beq \no \bar{\xi}_\ell-
\xi_\ell \leq
\frac{C_2}{\log q_\ell}.\eeq \\
As above, when we want to be explicit about the cover and the
relevant exponents we shall say that $\{Q_\ell\}_{\ell=1}^\infty$ is
uniformly clustered by $U_\ell=\{I_j^\ell\}_{j=1}^{k_\ell}$ with
exponents $\{\alpha_\ell, \xi_\ell, \mu \}$.
\end{definition}

\begin{lemma} \label{lemma-3.8}
Let $\{Q_\ell\}_{\ell=1}^\infty$ be a sequence of polynomials with
$\textrm{deg}(Q_\ell) \equiv q_\ell \underset{\ell \rightarrow
\infty}{\rightarrow}\infty,$ that is uniformly clustered by
$U_\ell=\{I_j^\ell\}_{j=1}^{k_\ell}$ with exponents $\{\alpha_\ell,
\xi_\ell, \mu \}$. Suppose, moreover, that
$\{U_\ell\}_{\ell=1}^\infty$ scales nicely with exponents $\mu$ and
$\omega$ for some $0<\omega<1$. Assume also that, for some $\zeta>0$
\beq \label{eq-3.7} 2\omega \left(\frac{\mu-1}{\mu-\omega}
\right)+\zeta<\xi_\ell \alpha_\ell. \eeq Finally, suppose that $\liminf_{\ell
\rightarrow \infty} \left( \inf_{z \in Z(Q_\ell)} |Q_\ell'(z)|
\right)>0$ and that $\ti{b}_{Q_\ell}$ is bounded from above.

Then for any $m >0$ \beq \label{eq-3.8} \lim_{\ell
\rightarrow \infty} \left(\inf_{E \in \bbR} |Q_\ell(E+i
q_\ell^{-1/\alpha_\ell})| q_\ell^{-m/\alpha_\ell}
\right)=\infty. \eeq
\end{lemma}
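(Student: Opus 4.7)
My plan is to extend the single-scale argument of Lemma \ref{lemma-3.3} by exploiting the multi-scale structure provided by the nicely scaling hierarchy $\{U_\eta\}$. Fix some $\varphi \in (0,1)$ to be chosen below in terms of $\xi_\ell\alpha_\ell$, $\omega$, and $\mu$; set $\varepsilon_\ell = q_\ell^{-1/\alpha_\ell}$; and split $\bbR = A^\varphi \cup B^\varphi$ exactly as in Lemma \ref{lemma-3.2}. On $A^\varphi$, the bound \eqref{3.3} combined with the hypothesis $\liminf_\ell \inf_{z \in Z(Q_\ell)} |Q_\ell'(z)| > 0$ yields a lower bound on $|Q_\ell(E + i\varepsilon_\ell)|$ which dominates any prescribed power $\varepsilon_\ell^{-m}$ provided $\varphi > 1 - \xi_\ell \alpha_\ell/2 + \delta'$ for some fixed $\delta'>0$; this is just the argument of Lemma \ref{lemma-3.3} in the $A$-region.

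For $E \in B^\varphi$ I would follow the proof of \eqref{3.4}, reducing to the product $|Q_\ell(\hat E + 4\varepsilon_\ell)|/|Q_\ell(\hat E)| = \prod_j |\hat E + 4\varepsilon_\ell - z_j|/|\hat E - z_j|$, where $\hat E \in I_E$ satisfies $|Q_\ell(\hat E)| = 2$. Zeros in $I_E$ contribute a large positive factor of order $3^{q_\ell^{\xi_\ell}} \cdot \varepsilon_\ell/\tilde b_{Q_\ell}$ (by the same computation as in the proof of \eqref{3.4}, using that $\tilde b_{Q_\ell}$ is bounded), so the associated log is at least $c\, q_\ell^{\xi_\ell} - O(\log q_\ell)$. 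Zeros outside $I_E$ that lie to the left of $\hat E + 4\varepsilon_\ell$ only improve the estimate; the dangerous contributions come from zeros $z_j > \hat E + 4\varepsilon_\ell$, and controlling them is where the single-scale argument loses the factor $(1 - 4\varepsilon_\ell^{1-\varphi})^{2q_\ell}$ responsible for the $\xi>2/3$ restriction in Lemma \ref{lemma-3.3}.

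To avoid this loss, I would perform a dyadic decomposition of these dangerous zeros by distance to $\hat E$. In each shell $[\eta, 2\eta]$ with $\eta \geq \varepsilon_\ell^\varphi$, the number of zeros is controlled by combining two complementary multi-scale facts: property (ii) of Definition \ref{def-1.5} says each $U_\eta$-interval contains at most $C_3 (\eta/\varepsilon_\ell)^\omega$ of the $U_\ell$-intervals, giving at most $C_3 (\eta/\varepsilon_\ell)^\omega q_\ell^{\bar\xi_\ell}$ zeros per $U_\eta$-interval; the lower bound $|I_j^\eta| \geq C\eta^\mu$ controls how many $U_\eta$-intervals can accumulate near $\hat E + 4\varepsilon_\ell$ and, crucially, sets the smallest admissible separation between the boundary of $I_E$ and the next $U_\ell$-cluster, i.e.\ the smallest distance at which a nontrivial block of ``outside'' zeros can start. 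Summing over shells yields a geometric-type series in $2^{-k(1-\omega)}$, convergent because $\omega<1$, producing a bound on the bad log of the form $-C\, q_\ell^{\bar\xi_\ell}\, \varepsilon_\ell^{(1-\varphi)(1-\omega)}$ with a further additive correction coming from the close-but-outside clusters at scales down to $\sim \varepsilon_\ell^\mu$.

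Balancing the bad log against the good log $\sim q_\ell^{\xi_\ell}\log 3$, combining with the $A^\varphi$ constraint $\varphi > 1 - \xi_\ell\alpha_\ell/2$, and using $\bar\xi_\ell - \xi_\ell = o(1)$ from Definition \ref{def-3.6}, the feasibility of choosing a single $\varphi$ making both regions work reduces precisely to the condition \eqref{eq-3.7}; the slack $\zeta>0$ gives the polynomial excess $m\log(1/\varepsilon_\ell)$ needed for arbitrary $m$, yielding \eqref{eq-3.8}. The main obstacle will be extracting the exact exponent $2\omega(\mu-1)/(\mu-\omega)$: both $\omega$ (through the cluster-in-cluster scaling) and $\mu$ (through the minimum cluster size and hence the smallest possible gap between adjacent $U_\ell$-clusters) must be tracked simultaneously, and the precise form of \eqref{eq-3.7} will come from optimizing the intermediate scale at which the ``close but outside'' regime (governed by $\mu$) and the ``far away'' regime (governed by $\omega$) balance.
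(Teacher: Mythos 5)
Your overall architecture matches the paper's: the split $\bbR=A^{\varphi}\cup B^{\varphi}$ with $\varphi\approx 1-\alpha_\ell\xi_\ell/2$, the treatment of $A^{\varphi}$ via \eqref{3.3} and the hypothesis on $\inf_z|Q_\ell'(z)|$, and on $B^{\varphi}$ the reduction to a lower bound on $\prod_{z_j\notin I_E}\bigl|\hat{E}+4\varepsilon_\ell-z_j\bigr|/\bigl|\hat{E}-z_j\bigr|$ obtained by counting the zeros to the right of $E$ with the two structural inputs $\omega$ (property (ii) of Definition \ref{def-1.5}) and $\mu$ (the lower bound $|I_j^\eta|\geq C\eta^\mu$). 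The gap is in the $B^{\varphi}$ count, and it is exactly the step you defer as ``optimizing the intermediate scale.'' Your geometric series in $2^{-k(1-\omega)}$, and the resulting main term $q_\ell^{\bar{\xi}_\ell}\varepsilon_\ell^{(1-\varphi)(1-\omega)}$, tacitly assume that a shell of width $\eta$ meets only $O(1)$ intervals of $U_\eta$; Definition \ref{def-1.5} only guarantees $|I_j^\eta|\geq C\eta^{\mu}$, so for $\mu>1$ as many as $\sim\eta^{1-\mu}$ such intervals can crowd into the shell. Inserting that factor makes the shell contribution $\sim q_\ell^{\bar{\xi}_\ell}\varepsilon_\ell^{1-\omega}\eta^{\omega-\mu}$, dominated by the innermost shell $\eta\sim\varepsilon_\ell^{\varphi}$, and the requirement that this be $o(q_\ell^{\xi_\ell})$ forces $\varphi\leq(1-\omega)/(\mu-\omega)$; combined with $\varphi>1-\alpha_\ell\xi_\ell/2$ this yields the condition $\alpha_\ell\xi_\ell>2(\mu-1)/(\mu-\omega)$, which is strictly stronger than \eqref{eq-3.7} whenever $\mu>1$ (the factor $\omega<1$ is lost). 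So neither reading of your shell count proves the lemma as stated: one is unjustified, the other gives a weaker theorem.

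The missing idea is that the zeros within distance $d$ of $\hat{E}$ must be counted at the \emph{coarser} scale $\eta(d)\sim(d/C)^{1/\mu}$, the largest scale whose minimum interval length is still $\geq d$: there $O(1)$ intervals of $U_{\eta(d)}$ suffice, giving the count $q_\ell^{\bar{\xi}_\ell}\left(d^{1/\mu}/\varepsilon_\ell\right)^{\omega}$, which sits between your two estimates and is precisely what produces the factor $\omega$ in \eqref{eq-3.7} (the threshold for $\varphi$ becomes $\mu(1-\omega)/(\mu-\omega)$ instead of $(1-\omega)/(\mu-\omega)$). The paper implements this by fixing a large integer $M$, introducing the nested scales $\hat{\varepsilon}^{r}=\varepsilon_\ell^{r\varphi_\ell/(M\mu)}$, and recursively refining only the two intervals of $U_{\hat{\varepsilon}^{r}}$ nearest to $E$ at each level, so that all remaining intervals at level $r$ sit at distance at least the minimum length $C_1\hat{\varepsilon}^{(r+1)\mu}$ of the next level; it also inserts an explicit factor $2$ in the exponents to account for the possible overlaps of the $U_\eta$-intervals, which a pure length count ignores. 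This recursive two-nearest-intervals construction is the substance of the paper's proof of the lemma, not bookkeeping, so as it stands your proposal does not close.
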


\begin{proof}
Let $\varepsilon_\ell=q_\ell^{-1/\alpha_\ell}$. Choose $\delta>0$ so
that $\liminf_\ell \frac{\xi_\ell}{2}-\delta>0$ and $\frac{\zeta}{2 \alpha_\ell}>\delta$ 
for all $\ell$, and let
$\varphi_\ell=1-\frac{\alpha_\ell \xi_\ell}{2}+\alpha_\ell \delta$.
It follows that $0<\varphi_\ell<1$.

As in Lemma \ref{lemma-3.2}, let \beq \no A^{\varphi_\ell}=\{E \in
\bbR \mid d(E) \leq \varepsilon_\ell^{\varphi_\ell} \} \eeq and let
\beq \no B^{\varphi_\ell}= \bbR \setminus A^{\varphi_\ell}. \eeq
Then for large enough $\ell$ and for $E \in A^{\varphi_\ell}$, as in
the proof of Lemma \ref{lemma-3.2}, \beq \no
|Q_\ell(E+i\varepsilon_\ell)|^2 \geq
\left(\frac{q_\ell^{-1/\alpha_\ell} \min_{z \in Z(Q_\ell)}
|Q_\ell'(z)|}{e} \right)^2 e^{\frac{q_\ell^{2\delta}}{8}}. \eeq
Thus, since $\alpha_\ell$ and $\min_{z \in Z(Q_\ell)} |Q_\ell'(z)|$
are bounded away from zero, we only need to obtain a lower bound for
$E \in B^{\varphi_\ell}$ for sufficiently large $\ell$.

So let $E \in B^{\varphi_\ell}$. As before, since $\varphi_\ell$ is
bounded away from $1$, if $\ell$ is large enough
$\varepsilon_\ell^{\varphi_\ell}>5 \varepsilon_\ell$. Let
$\{z_j^\ell\}_{j=1}^{q_\ell}$ denote the zeros of $Q_\ell$ and let
$z^\ell(E)$ be the unique zero associated with $E$ according to the
discussion at the beginning of this section. Assume $z^\ell(E)<E$
and let $\hat{E}\in I_E^\ell$ be a point with
$|Q_\ell(\hat{E})| = 2$. As before \beq \label{eq-3.9}
\begin{split} |Q_\ell(E+i\varepsilon_\ell)| & \geq |Q(\hat{E}+4\varepsilon_\ell)| \geq 
\frac{q_\ell^{-1/\alpha_\ell}}{\ti{b}_{Q_\ell}} 3^{\#\left(Z(Q_\ell)\cap I_E^\ell \right)}\prod_{z^\ell_j \notin I^\ell_E}
\left|\frac{\hat{E}+4\varepsilon_\ell-z^\ell_j}{\hat{E}-z^\ell_j}
\right| \\
& \geq \frac{q_\ell^{-1/\alpha_\ell}}{\ti{b}_{Q_\ell}} e^{q_\ell^{\xi_\ell}}\prod_{z^\ell_j \notin I^\ell_E}
\left|\frac{\hat{E}+4\varepsilon_\ell-z^\ell_j}{\hat{E}-z^\ell_j}
\right|.
\end{split} \eeq
We shall use the fact that $\{Q_\ell\}$ is uniformly clustered with
nicely scaling covering sets to obtain lower bounds on
$\prod_{z^\ell_j \notin I^\ell_E}
\left|\frac{\hat{E}+4\varepsilon_\ell-z^\ell_j}{\hat{E}-z^\ell_j}
\right|$. As in the proof of Lemma \ref{lemma-3.2}, we shall assume
that the $z_j^\ell$ that are not in $I^\ell_E$ are located to the
right of $E$.

Let $M$ be an integer so large that $\frac{\varphi_\ell}{\alpha_\ell
M}<\xi_\ell+\frac{1}{\alpha_\ell}-1-\eta$ for some $\eta>0$ for all
$\ell$. Such an $M$ exists since $\alpha_\ell<1$ and $\xi_\ell$ is
bounded away from $0$. Let
$\hat{\varepsilon}=\varepsilon_\ell^{\varphi_\ell/(M\mu)}$. Then,
since $\{U_\ell\}_{\ell=1}^\infty$ scale nicely, there is a
collection of intervals $U_{\hat{\varepsilon}}$ of length at most
$\hat{\varepsilon}$ and at least
$C_1\varepsilon_\ell^{\varphi_\ell/M}$ ($C_1$ some positive
constant) covering the elements of $U_\ell$ as in Definition
\ref{def-1.5}. In particular, it is possible to cover
$J\equiv\cup_{j=1}^{k_\ell}I_j^\ell \cap [E,
E+C_1\varepsilon_\ell^{\varphi_\ell/M}]$ by using no more than two
elements of $U_{\hat{\varepsilon}}$.

We proceed to analyze the possible distribution of zeros of $Q_\ell$
in $J$. Let $1\leq r <M$. Note that, by the nice scaling property,
there exists a constant $C_3>0$ such that any element of
$U_{\hat{\varepsilon}^r}$ contains no more than $ C_3 \left(
\hat{\varepsilon}^r/\varepsilon_\ell \right)^\omega=C_3
\varepsilon_\ell^{\omega\left((\varphi_\ell r)/(M\mu)-1\right)}$ elements of $U_{\varepsilon_\ell}$. By
the uniform clustering, each set in $U_{\varepsilon_\ell}$ holds no more than
$q_\ell^{\bar{\xi}_\ell}$ zeros of $Q_\ell$. Therefore, each
interval of $U_{\hat{\varepsilon}^r}$ holds no more than $C_3
q_\ell^{\bar{\xi}_\ell}\varepsilon_\ell^{\omega\left((\varphi_\ell
r)/(M\mu)-1\right)} =C_3
q_\ell^{\bar{\xi}_\ell-\frac{\omega}{\alpha_\ell}((\varphi_\ell
r)/(M\mu)-1)}$ zeros of $Q_\ell$.

Now, there are no more than $C_3\left(
\frac{\hat{\varepsilon}}{\hat{\varepsilon}^2} \right)^\omega=C_3
\varepsilon_\ell^{-(\varphi_\ell \omega)/(M\mu)}$ elements of
$U_{\hat{\varepsilon}^2}$ in each interval of
$U_{\hat{\varepsilon}}$. Therefore, it takes no more than $2C_3
\varepsilon_\ell^{-(\varphi_\ell \omega)/(M\mu)}$ elements of
$U_{\hat{\varepsilon}^2}$ to cover $J$. Each of these has length at
least $C_1 \hat{\varepsilon}^{2\mu}=C_1 \varepsilon_\ell^{2
\varphi_\ell/M}$ and holds no more than $C_3
q_\ell^{\bar{\xi}_\ell-\frac{\omega}{\alpha_\ell}((2\varphi_\ell)/(M\mu)-1)}$ zeros of $Q_\ell$. 
Take the two intervals closest to $E$ (so that at least one of them lies completely to the right of $E$).
Each of these intervals contains no more than $C_3
\varepsilon_\ell^{-(\varphi_\ell \omega)/(M\mu)}$ elements of
$U_{\hat{\varepsilon}^3}$, each of length at least $C_1
\varepsilon_\ell^{3 \varphi_\ell/M}$ and containing at most $C_3
q_\ell^{\bar{\xi}_\ell-\frac{\omega}{\alpha_\ell}((3\varphi_\ell
)/(M\mu)-1)}$ zeros of $Q_\ell$. Of these elements of
$U_{\hat{\varepsilon}^3}$ (contained in the two intervals above), take the two closest to $E$ 
(again, so that at least one lies completely to the right of $E$) and decompose
them as above with the elements of $U_{\hat{\varepsilon}^4}$ .
Continue in this manner up to $U_{\hat{\varepsilon}^M}$. From the
picture we have just described we get the following estimate:

\beq \label{eq-3.10} \prod_{z^\ell_j \notin I^\ell_E}
\left|\frac{\hat{E}+4\varepsilon_\ell-z^\ell_j}{\hat{E}-z^\ell_j}
\right|\geq \Theta_1(\ell) \Theta_2(\ell), \eeq where (letting
$\ti{C}_1=\min(1,C_1)$ ) \beq \no
\begin{split} \Theta_1(\ell) & =\prod_{j=1}^{\left[ 2C_3
\varepsilon_\ell^{-(\varphi_\ell \omega)/(M\mu)} \right]+1}
\left(1-\frac{4 \varepsilon_\ell}{j\ti{C}_1
\varepsilon_\ell^{\varphi_\ell}} \right)^{2C_3
q_\ell^{\bar{\xi}_\ell-\frac{\omega}{\alpha_\ell}\left(\frac{\varphi_\ell
}{\mu}-1\right )}} \\
& \cdot \prod_{j=1}^{\left[ 2C_3 \varepsilon_\ell^{-(\varphi_\ell
\omega)/(M\mu)} \right]+1} \left(1-\frac{4
\varepsilon_\ell}{j\ti{C}_1
\varepsilon_\ell^{\frac{M-1}{M}\varphi_\ell}} \right)^{2C_3
q_\ell^{\bar{\xi}_\ell-\frac{\omega}{\alpha_\ell}\left(\frac{(M-1)\varphi_\ell
}{M\mu}-1\right)}} \\
& \cdot \prod_{j=1}^{\left[ 2C_3 \varepsilon_\ell^{-(\varphi_\ell
\omega)/(M\mu)} \right]+1} \left(1-\frac{4
\varepsilon_\ell}{j\ti{C}_1
\varepsilon_\ell^{\frac{M-2}{M}\varphi_\ell}} \right)^{2C_3
q_\ell^{\bar{\xi}_\ell-\frac{\omega}{\alpha_\ell}\left(\frac{(M-2)\varphi_\ell
}{M\mu}-1\right)}} \\
& \cdots \prod_{j=1}^{\left[ 2C_3 \varepsilon_\ell^{-(\varphi_\ell
\omega)/(M\mu)} \right]+1} \left(1-\frac{4
\varepsilon_\ell}{j\ti{C}_1
\varepsilon_\ell^{\frac{2}{M}\varphi_\ell}} \right)^{2C_3
q_\ell^{\bar{\xi}_\ell-\frac{\omega}{\alpha_\ell}\left(\frac{(2)\varphi_\ell
}{M\mu}-1\right)}},
\end{split} \eeq
and \beq \no \Theta_2(\ell)= \left(1 -\frac{4\varepsilon_\ell}{C_1
\varepsilon_\ell^{\frac{\varphi_\ell}{M}}} \right)^{q_\ell}. \eeq

The term $\Theta_2(\ell)$ comes from the zeros of $Q_\ell$ outside $J$.
The term $\Theta_1(\ell)$ comes from the contribution of the zeros inside
$J$. The first term in the product defining $\Theta_1(\ell)$, i.e.\ $\prod_{j=1}^{\left[ 2C_3
\varepsilon_\ell^{-(\varphi_\ell \omega)/(M\mu)} \right]+1}
\left(1-\frac{4 \varepsilon_\ell}{j\ti{C}_1
\varepsilon_\ell^{\varphi_\ell}} \right)^{2C_3
q_\ell^{\bar{\xi}_\ell-\frac{\omega}{\alpha_\ell}\left(\frac{\varphi_\ell
}{\mu}-1\right )}}$, comes from the contribution of the zeros inside
the two intervals of $U_{\hat{\varepsilon}^{(M-1)}}$ that are closest to
$E$ from the right (one of them may contain $E$ and extend to its left as well). 
The zeros inside each of these intervals have to be
distributed among at most $\left[ C_3
\varepsilon_\ell^{-(\varphi_\ell \omega)/(M\mu)} \right]+1$ elements
of $U_{\hat{\varepsilon}^{M}}$ with at most ${C_3
q_\ell^{\bar{\xi}_\ell-\frac{\omega}{\alpha_\ell}\left(\frac{\varphi_\ell
}{\mu}-1\right )}}$ in each element. In the same way, the second
term in the product comes from estimating the contribution of the
zeros inside the two intervals of $U_{\hat{\varepsilon}^{(M-2)}}$ that are
closest to $E$ from the right. Continuing in this manner we obtain
$\Theta_1(\ell)$. The factor $2$ in the exponent $\left( 2C_3
q_\ell^{\bar{\xi}_\ell-\frac{\omega}{\alpha_\ell}\left(\frac{\varphi_\ell
}{\mu}-1\right )}\right)$ comes from the fact that intervals in the various $U_\varepsilon$ may intersect 
each other. 
Once again, we recall that we assume that all zeros outside 
$I_E^\ell$ are to the right of $E$.

By \eqref{eq-3.9} we only have to show $\liminf_{\ell \rightarrow
\infty}\frac{\log \Theta_1(\ell)}{q_\ell^{\xi_\ell}} \geq 0$,
$\liminf_{\ell \rightarrow \infty}\frac{\log
\Theta_2(\ell)}{q_\ell^{\xi_\ell}} \geq 0$. A straightforward computation
shows that the choice of $M$ insures that this indeed is the case
for $\Theta_2(\ell)$ so we are left with estimating $\liminf_{\ell
\rightarrow \infty}\frac{\log \Theta_1(\ell)}{q_\ell^{\xi_\ell}} \geq 0$.
Now, for $\ell$ large enough, \beq \label{eq-3.11} \begin{split}
\frac{\log \Theta_1(\ell)}{q_\ell^{\xi_\ell}} & \geq
-\frac{16 C_3}{q_\ell^{\xi_\ell}} \sum_{s=0}^{M-2} \sum_{j=1}^{\left[
2C_3 \varepsilon_\ell^{-(\varphi_\ell \omega)/(M\mu)} \right]+1}
\frac{\varepsilon_\ell^{\left(1-\frac{M-s}{M}\varphi_\ell
\right)}}{j\ti{C}_1}
q_\ell^{\bar{\xi}_\ell-\frac{\omega}{\alpha_\ell}\left(\frac{(M-s)\varphi_\ell
}{M\mu}-1\right)}\\
& \geq -C \sum_{s=0}^{M-2}
q_\ell^{\frac{-1}{\alpha_\ell}\left(1-\frac{M-s}{M}\varphi_\ell
\right)}
q_\ell^{\bar{\xi}_\ell-\xi_\ell-\frac{\omega}{\alpha_\ell}\left(\frac{(M-s)\varphi_\ell
}{M\mu}-1\right)} \sum_{j=1}^{\left[ 2C_3 q_\ell^{(\varphi_\ell
\omega)/(\alpha_\ell M\mu)} \right]+1}
\frac{1}{j} \\
& \geq -\ti{C} \log \left( q_\ell \right)
q_\ell^{\bar{\xi}_\ell-\xi_\ell}
q_\ell^{\frac{\varphi_\ell}{\alpha_\ell}-\frac{1}{\alpha_\ell}+\frac{\omega}{\alpha_\ell}-\frac{\omega\varphi_\ell}{\mu
\alpha_\ell}} \sum_{s=0}^{M-2} q_\ell^{\left(-\frac{\varphi_\ell}{M
\alpha_\ell}+\frac{\omega \varphi_\ell }{\alpha_\ell \mu M} \right)s} \\
& \geq -\ti{C} \log \left( q_\ell \right)
q_\ell^{\bar{\xi}_\ell-\xi_\ell}
q_\ell^{\frac{\varphi_\ell}{\alpha_\ell}-\frac{1}{\alpha_\ell}+\frac{\omega}{\alpha_\ell}-\frac{\omega\varphi_\ell}{\mu
\alpha_\ell}} \frac{1}{1-q_\ell^{\frac{\varphi_\ell}{\alpha_\ell
M}\left(\frac{\omega}{\mu}-1 \right)}} \\
&\geq -\ti{\ti{C}} q_\ell^{\bar{\xi}_\ell-\xi_\ell}
q_\ell^{\frac{1}{\alpha_\ell}\left(\varphi_\ell
\left(1-\frac{\omega}{\mu} \right)+\omega-1 \right)} \log(q_\ell)
\end{split} \eeq
where $C,\ \ti{C},\ \ti{\ti{C}}$ are some positive constants. By
(iv) in Definition \ref{def-3.6} we see that
$q_\ell^{\bar{\xi}_\ell-\xi_\ell} \leq \hat{C}$ for some positive
constant $\hat{C}$. Moreover, a straightforward computation shows
that \eqref{eq-3.7}, together with $\frac{\zeta}{2 \alpha_\ell}>\delta$, 
implies that $\limsup_{\ell \rightarrow \infty}
{\frac{1}{\alpha_\ell}\left(\varphi_\ell \left(1-\frac{\omega}{\mu}
\right)+\omega-1 \right)}<0$. This, together with \eqref{eq-3.11},
implies that $\liminf_{\ell \rightarrow \infty}\frac{\log
\Theta_1(\ell)}{q_\ell^{\xi_\ell}} \geq 0$ and we are done.
\end{proof}

\begin{proof}[Proof of Theorem \ref{th-1.8}]
By Lemma \ref{first-lemma}, \beq \no P(q,T) \leq 4 T^4 \left(1+2
\parallel V
\parallel_\infty \right)^2 \left( \frac{1}{\inf_{E \in \bbR} \left|\calD^q(E+i/T) \right|}\right)^2. \eeq
If $T^{-1} \geq q_\ell^{-1/\alpha_\ell}$, $\left| \calD^{q_\ell}(E+i/T) \right|
\geq \left| \calD^{q_\ell}(E+iq_\ell^{-1/\alpha_\ell}) \right|$. Thus, we only need to check that
$\calD^{q_\ell}$ satisfies the assumptions of Lemma \ref{lemma-3.8}. We know 
(see the discussion after Corollary \ref{cor-1.3})
$\sup_{1\leq j \leq q_\ell} b_{q_\ell,j}$ is bounded from above, which by Lemma \ref{derivative-bound}, implies that 
$\min_{1 \leq j \leq q_\ell}\left| \left( \calD^{q_\ell} \right)'\left(\widetilde{E}_{q_\ell,j} \right) \right|$ is bounded 
away from zero. The rest of the properties are clear from the assumptions of Theorem \ref{th-1.8}. 
\end{proof}

\section{Proof of Corollaries \ref{cor-1.3}, \ref{cor-1.6}, \ref{cor-1.10} and  Proposition \ref{full-line}}

The corollaries follow from 
\begin{proposition} \label{prop-4.1} 
Let $\alpha>0$. Then 
\begin{enumerate}
\item If there exist a monotone increasing sequence $\{q_\ell\}_{\ell=1}^\infty$, a constant $C>0$ and an
$\varepsilon>0$ such that $P(q_\ell,q_\ell^{1/\alpha}) \leq C q_\ell^{-\varepsilon}$, then $\alpha_l^- \leq \alpha$.

Moreover, if $\{q_\ell\}_{\ell=1}^\infty$ is exponentially growing and 
$P(q_\ell,T) \leq C q_\ell^{-\varepsilon}$ for any $T \leq q_\ell^{1/\alpha}$, then $\alpha_l^+ \leq \alpha$.

\item If there exists a monotone increasing sequence $\{q_\ell\}_{\ell=1}^\infty$ 
such that $P(q_\ell,q_\ell^{1/\alpha})=O(q_\ell^{-m})$ for all $m$, then $\alpha_u^- \leq \alpha$.

Moreover, if $\{q_\ell\}_{\ell=1}^\infty$ is exponentially growing and $P(q_\ell,T)=O(q_\ell^{-m})$ for all $m$ uniformly in 
$T \leq q_\ell^{1/\alpha}$, then $\alpha_u^+ \leq \alpha$. 
\end{enumerate}
\end{proposition}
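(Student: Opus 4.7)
My strategy is to exploit two elementary facts: first, the monotonicity $P(q', T) \leq P(q, T)$ whenever $q' \geq q$, which is immediate from $P(q,T) = \sum_{|n|>q}\cdots$; and second, $P(q,T) \leq 1$ always, so each of the four limits appearing in \eqref{eq-1.2}--\eqref{eq-1.3} is nonpositive. This second observation means that $\{\alpha > 0 : \liminf \log P(T^\alpha, T)/\log T = 0\}$ coincides with $\{\alpha > 0 : \liminf \geq 0\}$, and by monotonicity this set is downward closed; the same structure holds for the other three sets. Consequently, to show $\alpha_l^- \leq \alpha$, $\alpha_l^+ \leq \alpha$, $\alpha_u^- \leq \alpha$, or $\alpha_u^+ \leq \alpha$, it suffices to exhibit, for every $\alpha' > \alpha$, strict negativity (respectively $= -\infty$) of the appropriate $\liminf$ or $\limsup$.

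For the first half of Part 1, I would substitute $T_\ell = q_\ell^{1/\alpha}$ so that the hypothesis becomes $P(T_\ell^\alpha, T_\ell) \leq C T_\ell^{-\alpha\varepsilon}$. Since $T_\ell^{\alpha'} \geq T_\ell^\alpha$ for $\alpha' > \alpha$ and $T_\ell \geq 1$, monotonicity preserves the bound, and $T_\ell \to \infty$ forces $\liminf_{T\to\infty} \log P(T^{\alpha'}, T)/\log T \leq -\alpha\varepsilon < 0$, giving $\alpha_l^- \leq \alpha'$; letting $\alpha' \downarrow \alpha$ yields the claim. The first half of Part 2 is identical except that $\varepsilon$ is replaced by an arbitrary $m$, driving the $\liminf$ to $-\infty$ and thereby excluding $\alpha'$ from the set defining $\alpha_u^-$.

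For the two "moreover" cases I need to promote subsequential control to uniform control at all large $T$, which is where exponential growth enters. Let $r = \sup_\ell q_{\ell+1}/q_\ell < \infty$. Given $T$ large, pick $\ell(T)$ minimal with $T \leq q_{\ell(T)}^{1/\alpha}$; then $T > q_{\ell(T)-1}^{1/\alpha}$, and the ratio bound gives $q_{\ell(T)} \leq r\, q_{\ell(T)-1} < rT^\alpha$, while also $q_{\ell(T)} \geq T^\alpha$. Fix $\alpha' > \alpha$. Once $T$ is so large that $T^{\alpha'-\alpha} > r$, we have $T^{\alpha'} > q_{\ell(T)}$, so by the uniform hypothesis (applicable since $T \leq q_{\ell(T)}^{1/\alpha}$) together with monotonicity,
\[
P(T^{\alpha'}, T) \leq P(q_{\ell(T)}, T) \leq C q_{\ell(T)}^{-\varepsilon} \leq C T^{-\alpha\varepsilon}.
\]
Since this now holds for \emph{all} sufficiently large $T$, we get $\limsup_{T\to\infty} \log P(T^{\alpha'}, T)/\log T \leq -\alpha\varepsilon < 0$, hence $\alpha_l^+ \leq \alpha$. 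Replacing $\varepsilon$ by arbitrary $m$ yields $\alpha_u^+ \leq \alpha$ in the same way.

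The proof is essentially bookkeeping; I do not anticipate any genuine obstacle. The only point requiring minor care is the choice of $\ell(T)$ in the moreover cases and the observation that the \emph{finite} ratio bound $r$ does all the quantitative work, while the second condition $\inf_\ell q_{\ell+1}/q_\ell > 1$ in Definition \ref{def-1.1} is used only implicitly to ensure $q_\ell \to \infty$.
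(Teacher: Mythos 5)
Your proof is correct and follows essentially the same route as the paper: evaluate along $T_\ell=q_\ell^{1/\alpha}$ for the $\alpha_l^-,\alpha_u^-$ statements, and use the bounded ratio $\sup_\ell q_{\ell+1}/q_\ell<\infty$ to pass from the subsequence to all large $T$ for the $\alpha_l^+,\alpha_u^+$ statements. Your handling of the exponential-growth step is in fact a clean simplification of the paper's (which imports the $q_{\ell(T)+\lfloor\sqrt{\ell(T)}\rfloor}$ construction from Damanik--Tcheremchantsev), and you usefully make explicit the monotonicity of $P(q,T)$ in $q$ and the resulting downward-closedness of the defining sets, which the paper leaves implicit when concluding $\alpha_l^-\leq\alpha$ from strict negativity of the $\liminf$.
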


\begin{proof}
The proof of the bounds for $\alpha_l^{\pm}$ (part 1 in the Proposition) and $\alpha_u^{\pm}$ 
(part 2 in the proposition) are almost identical so we give the details only for part 1. 
\begin{enumerate}
\item Let $T_\ell=q_\ell^{1/\alpha}$. Then 
\beq \no
\liminf_{T \rightarrow \infty} \frac{\log P(T^\alpha, T)}{\log T} \leq
\liminf_{\ell \rightarrow \infty} \frac{\log P(T_\ell^\alpha, T_\ell)}{\log T_\ell} \leq -\varepsilon \alpha < 0  
\eeq so $\alpha_l^- \leq \alpha$ by definition. 

Now assume $\{q_\ell\}$ is exponentially growing. We shall show that $\alpha'>\alpha_l^+$ for any $\alpha'>\alpha$. 
Our proof here follows the strategy implemented in \cite[Proof of Theorem 3]{DT3} for the construction of $N(T)$. 
Let $\alpha'>\alpha$. Let $\overline{q}=\sup_\ell \frac{q_{\ell+1}}{q_\ell}$ and $\underline{q}=\inf_\ell \frac{q_{\ell+1}}{q_\ell}$ and 
$r=\frac{\log \overline{q}}{\log \underline{q}}$.
Finally, for any sufficiently large $T$ let $\ell(T)$ be the unique index such that 
\beq \no
q_{\ell(T)} \leq T^\alpha < q_{\ell(T)+1}
\eeq
and let $q(T)=q_{\ell(T)+\lfloor\sqrt{\ell(T)} \rfloor}$. Note that 
\beq \no 
\frac{q(T)}{q_{\ell(T)}} \leq \overline{q}^{\sqrt{\ell(T)}} \leq \underline{q}^{r\sqrt{\ell(T)}}
\eeq
and $\underline{q}^{\ell(T)} \leq \ti{C} q_{\ell(T)} \leq \ti{C} T^\alpha$ for some constant $\ti{C}>0$, so 
\beq \no
q(T) \leq \ti{C} q_{\ell(T)} T^{\alpha r/\sqrt{\ell(T)}} \leq \ti{C} T^\alpha T^{\alpha r/\sqrt{\ell(T)}}.
\eeq 
Since $\ell(T) \rightarrow \infty$, it follows that for any $\delta>0$ there exists a constant $C_\delta$ such that 
\beq \no
q(T) \leq C_\delta T^{\alpha+\delta}. 
\eeq
Pick $\delta$ so that $\alpha+\delta < \alpha'$. Thus, for sufficiently large $T$, 
$P(T^{\alpha'},T) \leq P(C_\delta T^{\alpha+\delta},T)\leq P(q(T),T) \leq P(q_{\ell(T)+1},T)$ and we get (recall 
$q_{\ell(T)}^{1/\alpha}\leq T < q_{\ell(T)+1}^{1/\alpha}$)
\beq \no
\limsup_{T \rightarrow \infty} \frac{\log P(T^{\alpha'})}{\log T} \leq 
\limsup_{\ell \rightarrow \infty} \frac{-\varepsilon \alpha \log q_{\ell(T)+1}}{\log q_{\ell(T)}}=-\varepsilon \alpha<0.
\eeq
Therefore $\alpha'>\alpha_l^+$ and we are done.
\item Repeat the proof of part 1 with the changes: $\alpha_l^-$ to $\alpha_u^-$, $\alpha_l^+$ to $\alpha_u^+$, and replace 
$-\varepsilon \alpha$ by $-\infty$.   
\end{enumerate}
\end{proof}

\begin{proof}[Proof of Corollary \ref{cor-1.3}]
Let $\alpha>3/\beta$. Clearly, by Theorem \ref{Thouless-width}, for any $T \leq q_\ell^{1/\alpha}$,
$P(q_\ell,T) \leq C(V)q_\ell^{\frac{6}{\alpha}-2\beta}$, where $C(V)$ is a constant that depends on the potential 
$V$. By the assumption, $\frac{6}{\alpha}-2\beta<0$ so we may apply part 1 of Proposition \ref{prop-4.1} to conclude that 
$\alpha_l^- \leq \alpha$ and so that $\alpha_l^- \leq 3/\beta$. If $\{q_\ell\}_{\ell=1}^\infty$ is exponentially growing 
we get $\alpha_l^+ \leq \alpha$ which implies that $\alpha_l^+ \leq 3/\beta$. 
\end{proof}

\begin{proof}[Proof of Corollary \ref{cor-1.6}]
The corollary follows immediately from Theorem \ref{clustering} and Proposition \ref{prop-4.1} above.
\end{proof}

\begin{proof}[Proof of Corollary \ref{cor-1.10}]
The corollary follows immediately from Theorem \ref{th-1.8} and Proposition \ref{prop-4.1} above.
\end{proof}

We conclude this section with the proof of Proposition \ref{full-line} reducing the full line case 
to the half line cases treated above.

\begin{proof}[Proof of Proposition \ref{full-line}]
Let $R^{\pm}=\ip{\delta_{\pm 1}}{\cdot} \delta_0+\ip{\delta_0}{\cdot} \delta_{\pm1}$ 
and let $H_{\pm}=H-R^{\pm}$. Applying the
resolvent formula to $\left(H-z \right)^{-1}$ with $z \in \bbC
\setminus \bbR$, one gets \beq \no \begin{split} \ip{\delta_{\pm
n}}{\left(H-z \right)^{-1}\delta_0} &= \ip{\delta_{\pm n}}{\left(H-z
\right)^{-1}\delta_0}  \\ & - \ip{\delta_{\pm n}}{\left(H_{\pm}-z
\right)^{-1}\delta_0}\ip{\delta_{\pm
1}}{\left(H-z \right)^{-1}\delta_0} \\
&-\ip{\delta_{\pm n}}{\left(H_{\pm}-z \right)^{-1}\delta_{\pm
1}}\ip{\delta_0}{\left(H-z \right)^{-1}\delta_0} \\
&=-\ip{\delta_{\pm
n}}{\left(H_{\pm}-z \right)^{-1}\delta_{\pm 1}}\ip{\delta_0}{\left(H-z \right)^{-1}\delta_0} \\
&=-\ip{\delta_n}{\left(H^{\pm}-z \right)^{-1}\delta_1}
\ip{\delta_0}{\left(H-z \right)^{-1}\delta_0}\end{split} \eeq for
any integer $ n \geq 1$, since $H_{\pm}$ are direct sums. Since
$\left | \ip{\delta_0}{\left(H-z \right)^{-1}\delta_0} \right |^2
\leq \frac{1}{(\Im z)^2}$ it follows that for $n>1$ \beq \no \left|
\ip{\delta_{\pm n}}{\left(H-E-i/T \right)^{-1}\delta_0} \right|^2
\leq T^2 \left| \ip{\delta_n}{\left(H^{\pm}-E-i/T
\right)^{-1}\delta_1} \right|^2 \eeq which immediately implies by
\eqref{Parseval} that \beq \no \int_0^\infty \left| \ip{\delta_{\pm
n}}{e^{-itH}\delta_0} \right|^2 e^{-2t/T}dt \leq T^2 \int_0^\infty
\left| \ip{\delta_n}{e^{-itH^{\pm}}\delta_1} \right|^2 e^{-2t/T}dt.
\eeq

Plugging the left hand side into the definition of
$P_{\delta_0}(q,T)$ and applying the above inequality to the
positive and negative $n$ separately we get
\eqref{full-line-from-half}.
\end{proof}


\section{An Analysis of the Fibonacci Hamiltonian}

In this final section we apply our method to get an upper bound for the dynamics 
of the Fibonacci Hamiltonian---$H_{\rm{F}}$. This is the whole-line operator with potential given by \eqref{fibonacci}. 
We shall describe its relevant properties below. For a more comprehensive review see \cite{d3}. We shall concentrate on the application of 
Theorem \ref{th-1.8}, but will also remark on the possibility of using 
Theorems \ref{Thouless-width} and \ref{clustering} to get weaker results with significantly less effort.

The unique spectral properties of $H_{\rm{F}}$ make it an ideal candidate for studying the relationship between spectral 
properties and dynamics. In particular, for all $\lambda$ the spectrum of $H_{\rm{F}}$ is a Cantor set and the spectral 
measure is always purely singular continuous. Anomalous transport has been indicated by various numerical works since the 
late 1980's (see e.g.\ \cite{ah,gkp,ah1}). 
In particular, work by Abe and Hiramoto \cite{ah,ah1} suggested that $\alpha_l^\pm$ and $\alpha_u^\pm$ behave like 
$\frac{1}{\log \lambda}$ as $\lambda \rightarrow \infty$ (recall $\lambda$ is the coupling constant in \eqref{fibonacci}).

In \cite{KKL} Killip, Kiselev and Last have proven both a lower bound and an upper bound on the slow moving part of the 
wave packet whose asymptotic behavior agrees with this prediction. As mentioned in the Introduction, an upper bound 
for the fast moving part of the wave packet was proven recently by Damanik and Tcheremchantsev in \cite{DT3} and in 
\cite{DT4} where they proved the same upper bound without time averaging. 
The Damanik-Tcheremchantsev bound reads: for $\lambda \geq 8$, 
$\alpha_u^+ \leq \frac{2 \log \eta}{\log \zeta(\lambda)}$, where 
$\eta=\frac{\sqrt{5}+1}{2}$ and $\zeta(\lambda)=\frac{\lambda-4+\sqrt{(\lambda-4)^2-12}}{2}$. We shall also need 
$r(\lambda)= 2\lambda+22$. 

Using Theorem \ref{th-1.8}, we shall show
\begin{theorem} \label{fib}
Let $\lambda >8$ and let 
$\alpha(\lambda)=\frac{3 \log r(\lambda)-\log\left(\zeta(\lambda) \eta \right)}{\log\left(r(\lambda) \eta \right)} 
\cdot \frac{2\log \eta}{\log(\zeta(\lambda))}$. Then
\beq \no
\alpha_u^+(\lambda) \leq \alpha(\lambda). 
\eeq
\end{theorem}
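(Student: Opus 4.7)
The plan is to apply Theorem \ref{th-1.8} (via Corollary \ref{cor-1.10}) to the half-line Fibonacci Hamiltonians $H_{\rm{F}}^\pm$ and then transfer to the full-line operator via Proposition \ref{full-line}. The natural choice of scale is $q_\ell = F_\ell$, the $\ell$-th Fibonacci number, so that $q_\ell \sim \eta^\ell$ with $\eta=(\sqrt 5+1)/2$. The potential $V_{\rm{Fib}}^\lambda$ is very nearly periodic of period $F_\ell$ at Fibonacci scales, which is exactly the setting in which the ``$(1+2\|V\|_\infty)T^4$'' factor in Theorem \ref{clustering} costs little, and the eigenvalues $\widetilde E_{F_\ell,j}$ of $H^{F_\ell}(\pi/2)$ inherit the hierarchical structure of the Fibonacci trace map.

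The parameters $\alpha_\ell,\xi_\ell,\mu,\omega$ are to be extracted from the standard analysis of Fibonacci periodic approximants (see \cite{d3} and references therein). First, for $\lambda>8$ the off-real transfer matrix lower bound that underlies the Damanik--Tcheremchantsev estimate yields that band widths at scale $F_\ell$ decay at least like $\zeta(\lambda)^{-\ell}$; translating to the exponent scheme of Definition \ref{def-1.4}, this gives $q_\ell^{-1/\alpha_\ell}\sim \zeta(\lambda)^{-\ell}$, so $\alpha_\ell\to \log\eta/\log\zeta(\lambda)$. The hierarchical partition of the spectrum obtained by iterating the trace map gives the nicely scaling family $U_\ell$: at each iterate, clusters subdivide into a controlled number of smaller clusters, and $r(\lambda)=2\lambda+22$ serves as an explicit upper bound on the trace growth per step. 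This governs both the cluster populations (hence $\xi_\ell$ and $\bar\xi_\ell$) and the branching ratio between consecutive scales (hence $\omega$). One checks that $\mu$ can be taken close to $1$ because, for $\lambda$ large, the clusters at each step have comparable length. With these parameters, a direct computation shows
\[
 \frac{2 \log \eta}{\log \zeta(\lambda)} \cdot \frac{3\log r(\lambda)-\log(\zeta(\lambda)\eta)}{\log(r(\lambda)\eta)}
\]
is precisely the critical value of $\alpha$ for which condition \eqref{eq-1.5} of Theorem \ref{th-1.8} is saturated.

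Once the uniform clustering and nice scaling are established and \eqref{eq-1.5} is verified for any $\alpha$ strictly larger than $\alpha(\lambda)$, Theorem \ref{th-1.8} gives $P^\pm(F_\ell,T)\le C_m F_\ell^{-m}$ for all $m$ and all $T\le F_\ell^{1/\alpha}$. Proposition \ref{full-line} transfers this to the full-line quantity $P_{\delta_0}(F_\ell,T)$ at the cost of a harmless $T^2$ factor, and since $\{F_\ell\}$ is exponentially growing Corollary \ref{cor-1.10} (applied in its full-line version) gives $\alpha_u^+(\lambda)\le\alpha$ for every $\alpha>\alpha(\lambda)$, hence the theorem.

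The main obstacle is the quantitative verification of uniform clustering and nice scaling for the Fibonacci eigenvalues $\{\widetilde E_{F_\ell,j}\}$ with the precise constants required to recover $\zeta(\lambda)$ and $r(\lambda)$. The qualitative self-similarity of the Fibonacci spectrum is classical, but obtaining the explicit branching and width estimates that feed into \eqref{eq-1.5}---in particular, an explicit bound of the form $C r(\lambda)^\ell$ on the number of bands per cluster at the chosen hierarchical scale and the lower bound on cluster length coming from the trace map---requires careful bookkeeping of the trace map orbits for $\lambda>8$. The rest of the argument is algebraic optimization of the exponents to extract the stated $\alpha(\lambda)$.
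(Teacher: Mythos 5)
Your overall architecture coincides with the paper's: reduce to the half-line operators, take $q_\ell=F_\ell$, establish uniform clustering and nice scaling for the sets $\mathfrak{E}_{q_\ell}$, apply Theorem \ref{th-1.8} via Corollary \ref{cor-1.10}, and transfer back with Proposition \ref{full-line}. You also correctly identify the roles of $\zeta(\lambda)$ (lower bound on $|(\calD^{q_k})'|$ on $\sigma_k$, hence upper bound on band widths) and $r(\lambda)=2\lambda+22$ (upper bound on $|(\calD^{q_k})'|$, hence lower bound on band widths). However, the central construction is missing and one of your parameter claims is wrong in a way that changes the answer.

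The missing idea is the two-scale structure of the cover. You propose to cluster the eigenvalues $\widetilde E_{F_\ell,j}$ by intervals whose size matches the band widths at scale $\ell$ itself, writing $q_\ell^{-1/\alpha_\ell}\sim\zeta(\lambda)^{-\ell}$ so that $\alpha_\ell\to\log\eta/\log\zeta(\lambda)$. At that scale each covering interval contains essentially a single eigenvalue, so $\xi_\ell\to 0$ and condition (iii) of Definition \ref{def-1.4} fails; Theorem \ref{th-1.8} is simply not applicable. The paper instead covers $\mathfrak{E}_{F_\ell}$ by the bands of the periodic approximant at a strictly coarser scale $m(\ell)=[t\,\ell]$ with $\omega(\lambda)<t<1$ (the family built from $\sigma_{m(\ell)}$ together with the type B bands of $\sigma_{m(\ell)+1}$). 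The band-counting Lemma \ref{lem-a_b_interval_chain} then gives $\xi_\ell\to 1-t$ while $\alpha_\ell\to\omega(\lambda)/t$, condition \eqref{eq-1.5} becomes the constraint $t<\frac{\mu-\omega}{3\mu-2-\omega}$, and the theorem follows by optimizing $\alpha_u^+\le\omega(\lambda)/t$ over admissible $t$. Without this free parameter $t$ there is nothing to optimize and no route to the stated $\alpha(\lambda)$.

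Second, the claim that ``$\mu$ can be taken close to $1$'' is false. Combining Lemma \ref{derivative-bound} with the two derivative bounds shows band lengths at scale $m$ lie between $c\,r(\lambda)^{-(m+1)}$ and $4e\,\zeta(\lambda)^{-m/2}$, which forces $\mu$ to exceed $\mu'(\lambda)=2\log r(\lambda)/\log\zeta(\lambda)$; this tends to $2$, not $1$, as $\lambda\to\infty$. It is precisely this value of $\mu$, together with $\omega(\lambda)=2\log\eta/\log\zeta(\lambda)$, that yields $\alpha_u^+\le\frac{3\mu'-2-\omega}{\mu'-\omega}\,\omega$, i.e., the stated $\alpha(\lambda)$ with its prefactor tending to $2$. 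Were $\mu$ near $1$, \eqref{eq-1.5} would essentially only require $\xi_\ell\alpha_\ell>\zeta$ and the ``critical value'' you compute would be a different, stronger, and unproven bound. So the concluding ``algebraic optimization'' cannot recover $\alpha(\lambda)$ from the parameters as you have assigned them.
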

\begin{remark*}
We note that for $\lambda \geq 17$, $\alpha(\lambda)<1$ so this is a meaningful upper bound. 
Also, for $\lambda \geq 8$, 
$\frac{3 \log r(\lambda)-\log\left(\zeta(\lambda) \eta \right)}{\log\left(r(\lambda) \eta \right)} \leq 3$ and 
$\frac{3 \log r(\lambda)-\log\left(\zeta(\lambda) \eta \right)}{\log\left(r(\lambda) \eta \right)} \rightarrow 2$ as 
$\lambda \rightarrow \infty$.
\end{remark*}

Fix $\lambda > 8$. By Proposition \ref{full-line} and the symmetry of $H_{\rm{F}}$ 
($V_{\rm{Fib};-n}^\lambda=V_{\rm{Fib};n-1}^\lambda$ for 
$n\geq 2$), it is enough to consider the one-sided operator
$H_{\rm{F}}^+$ which is the restriction of $H_{\rm{F}}$ to $\bbN$. In order to apply Theorem \ref{th-1.8} we need to 
choose a sequence $\{q_\ell\}_{\ell=1}^\infty$. As in most works dealing with $H_{\rm{F}}$, we shall focus on the 
Fibonacci sequence: $F_\ell=F_{\ell-1}+F_{\ell-2}$ and $F_0=F_1=1$, and let $q_\ell \equiv F_\ell$. 
We recall that there exists a constant $C_\eta>0$ such that 
$C_\eta^{-1} \eta^\ell \leq q_\ell \leq C_\eta \eta^\ell$ so that, in particular, $q_\ell$ is exponentially growing. 

We need to show that $\mathfrak{E}_{q_\ell}$ is uniformly clustered by a sequence of interval families, 
$\{U_\ell\}_{\ell=1}^\infty$, that scales nicely. The relevant exponents will determine $\alpha(\lambda)$. Let 
$H_{\ell}^{\rm{per}}$ be the whole-line operator with potential $V_{q_\ell}^{\rm{per}}$ given by
\beq \no
V_{q_\ell;nq_\ell+j}^{\rm{per}}=V_{\rm{Fib};j}^\lambda  \quad 1 \leq j \leq q_\ell,
\eeq
namely, $V_{q_\ell}^{\rm{per}}$ is the $q_\ell$-periodic potential whose first $q_\ell$ entries coincide with those of 
$V_{\rm{Fib}}^\lambda$. As mentioned in the Introduction, the spectrum of 
$H_{\ell}^{\rm{per}}$, $\sigma_\ell$, is a set of intervals (bands). We will presently 
show that a natural cover for $\mathfrak{E}_{q_\ell}$ is provided by the bands in $\sigma_{m(\ell)}$ and 
$\sigma_{m(\ell)+1}$ for some $m(\ell)$ to be determined later.

Thus, we begin by considering $\sigma_{\ell}$. Following \cite{KKL}, we define a \emph{type A} band
as a band $I_{\ell}\subseteq\sigma_{\ell}$ such that $I_{\ell}\subseteq\sigma_{\ell-1}$
(so that $I_{\ell}\cap(\sigma_{\ell+1}\cup\sigma_{\ell-2})=\emptyset$). 
We define a \emph{type B} band as a band $I_{\ell}\subseteq\sigma_{\ell}$
such that $I_{\ell}\subseteq\sigma_{\ell-2}$ (and so $I_{\ell}\cap\sigma_{\ell-1}=\emptyset$). 
Letting $\sigma_{-1}=\mathbb{R}$ and $\sigma_{0}=[-2,2]$, and noting $\sigma_{1}=[\lambda-2,\lambda+2]$, 
we get that, for $\lambda>4$, $\sigma_{0}$ consists of one type A band and $\sigma_{1}$ consists of one type B band.

The structure of the spectrum of the Fibonacci Hamiltonian $H_{\rm{F}}$
can be deduced by using the following lemma (Lemma 5.3 in \cite{KKL}):

\begin{lemma} \label{lem-fibonacci_spec_building}
Assume $\lambda>4$. Then for any $\ell>0$ :
\begin{enumerate}
\item Every type A band $I_{\ell}\subseteq\sigma_{\ell}$ contains exactly
one type B band $I_{\ell+2}\subseteq\sigma_{\ell+2}$ and no other bands from
$\sigma_{\ell+1}$ or $\sigma_{\ell+2}$.
\item Every type B band $I_{\ell}\subseteq\sigma_{\ell}$ contains exactly
one type A band $I_{\ell+1}\subseteq\sigma_{\ell+1}$ and two type B bands
$I_{\ell+2,1}\subseteq\sigma_{\ell+2}$ and $I_{\ell+2,2}\subseteq\sigma_{\ell+2}$
located one on each side of $I_{\ell}$ . 
\end{enumerate}
\end{lemma}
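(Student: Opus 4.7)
The plan rests on the \emph{Fibonacci trace map}. Set $x_\ell(E):=\tfrac{1}{2}\calD^{q_\ell}(E)$, so that $\sigma_\ell=\{E:|x_\ell(E)|\leq 1\}$ and on each band of $\sigma_\ell$ the polynomial $x_\ell$ is monotone, sweeping $[-1,1]$ exactly once. The Fibonacci recursion $q_{\ell+1}=q_\ell+q_{\ell-1}$, combined with the $\textrm{SL}(2,\bbR)$ identity $\textrm{tr}(AB)+\textrm{tr}(AB^{-1})=\textrm{tr}(A)\textrm{tr}(B)$ applied to the corresponding transfer matrices, yields the trace map $x_{\ell+1}=2x_\ell x_{\ell-1}-x_{\ell-2}$ together with the associated Fricke invariant $x_\ell^2+x_{\ell-1}^2+x_{\ell-2}^2-2x_\ell x_{\ell-1}x_{\ell-2}-1\equiv\lambda^2/4$, both independent of $\ell$ for each fixed $E$. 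These facts are standard (see \cite{d3}) and I take them as known.

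\textbf{Part 1 (type A).} Let $I_\ell$ be a type A band, so $|x_\ell|,|x_{\ell-1}|\leq 1$ on $I_\ell$. Solving the invariant as a quadratic in $x_{\ell-2}$ yields $x_{\ell-2}=x_\ell x_{\ell-1}\pm\sqrt{(1-x_\ell^2)(1-x_{\ell-1}^2)+\lambda^2/4}$, whence $|x_{\ell-2}|\geq\lambda/2-1>1$. Substituting into the trace map gives $x_{\ell+1}=x_\ell x_{\ell-1}\mp\sqrt{(1-x_\ell^2)(1-x_{\ell-1}^2)+\lambda^2/4}$, so $|x_{\ell+1}|\geq\lambda/2-1>1$ throughout $I_\ell$. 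Thus $I_\ell\cap\sigma_{\ell+1}=\emptyset$, which gives the ``no bands of $\sigma_{\ell+1}$'' clause. Iterating once more, $x_{\ell+2}=2x_{\ell+1}x_\ell-x_{\ell-1}$ satisfies $|x_{\ell+2}|\geq\lambda-1>1$ at the two endpoints of $I_\ell$ (where $x_\ell=\pm 1$), while at the interior point where $x_\ell=0$ one has $|x_{\ell+2}|=|x_{\ell-1}|\leq 1$. A sign-and-monotonicity analysis, based on the explicit formula for $x_{\ell+1}$ above, shows that $x_{\ell+2}$ enters $[-1,1]$ on exactly one subinterval $J\subseteq I_\ell$; since $J\subseteq\sigma_\ell$, this $J$ is the unique band of $\sigma_{\ell+2}$ in $I_\ell$ and is of type B.

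\textbf{Part 2 (type B).} Let $I_\ell$ be type B, so $|x_\ell|,|x_{\ell-2}|\leq 1$ on $I_\ell$ while $|x_{\ell-1}|>1$ on $I_\ell$ (since $I_\ell\cap\sigma_{\ell-1}=\emptyset$). At an endpoint where $x_\ell=\pm 1$, the invariant reduces to $(x_{\ell-1}\mp x_{\ell-2})^2=\lambda^2/4$, giving $|x_{\ell-1}|\geq\lambda/2-1>1$ there. The trace map then produces $x_{\ell+1}=\pm 2x_{\ell-1}-x_{\ell-2}$ at the endpoints, of modulus at least $\lambda-3>1$, whereas at the interior point $E^\star$ with $x_\ell(E^\star)=0$ we have $x_{\ell+1}(E^\star)=-x_{\ell-2}(E^\star)$, hence $|x_{\ell+1}(E^\star)|\leq 1$. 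A crossing count shows $x_{\ell+1}$ traverses $[-1,1]$ on a single subinterval $J_A\subseteq I_\ell$, yielding the unique type A band of $\sigma_{\ell+1}$ inside $I_\ell$. An analogous analysis for $x_{\ell+2}$ produces two sub-bands of $\sigma_{\ell+2}$, one on each side of $J_A$; both are of type B since they lie in $\sigma_\ell\setminus\sigma_{\ell-1}$.

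The routine part is the derivation of the trace map and the invariant; the \emph{main obstacle} is the precise counting of the $\pm 1$-crossings of $x_{\ell+1}$ and $x_{\ell+2}$ on each band $I_\ell$. The cleanest route is to parametrize $I_\ell$ by $t:=x_\ell\in[-1,1]$ (using monotonicity of $x_\ell$) and express $x_{\ell+1}$ and $x_{\ell+2}$ as explicit algebraic functions of $t$ and $x_{\ell-1}(t)$ via the invariant. The hypothesis $\lambda>4$ is invoked throughout in the form $\lambda/2-1>1$ and $\lambda-3>1$. An induction on $\ell$, anchored by the explicit base cases $x_0(E)=E/2$ and $x_1(E)=(E-\lambda)/2$, supplies the sign and monotonicity data needed to convert the boundary estimates above into exact crossing counts at each generation.
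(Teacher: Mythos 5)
First, note that the paper does not prove this lemma at all: it is quoted verbatim as Lemma 5.3 of \cite{KKL} (the result goes back to Raymond's trace-map analysis), so there is no in-paper proof to compare against. Your proposal follows the standard route for that result: the trace map $x_{\ell+1}=2x_\ell x_{\ell-1}-x_{\ell-2}$ and the Fricke invariant. Your preliminary computations are correct: for a type A band the invariant forces $|x_{\ell+1}|\geq \lambda/2-1>1$ on all of $I_\ell$ (hence $I_\ell\cap\sigma_{\ell+1}=\emptyset$), and the evaluations of $x_{\ell+2}$ at the two endpoints ($|x_{\ell+2}|\geq\lambda-1$) and at the unique zero of $x_\ell$ ($|x_{\ell+2}|=|x_{\ell-1}|\leq 1$) are right; likewise the endpoint estimates in the type B case.

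The genuine gap is exactly where you flag it, and it cannot be left as a deferred ``sign-and-monotonicity analysis'': the entire content of the lemma is the word \emph{exactly}. Your three sample points show that $x_{\ell+2}$ enters $[-1,1]$ \emph{at least} once inside a type A band (and the analogous statements in the type B case), but nothing you have written excludes additional excursions of $x_{\ell+2}$ into $[-1,1]$ between the endpoints and the zero of $x_\ell$; $x_{\ell+2}$ is a polynomial of degree $F_{\ell+2}$ and can in principle oscillate many times across an interval on which $x_\ell$ is monotone. The standard way to close this is not a local crossing count but a global one: each $\sigma_m$ consists of exactly $F_m$ band functions (each swept homeomorphically onto $[-1,1]$ by $x_m$); the local analysis gives, by induction, \emph{at least} one type B band of $\sigma_{\ell+2}$ in each type A band of $\sigma_\ell$, at least one type A band of $\sigma_{\ell+1}$ and two type B bands of $\sigma_{\ell+2}$ in each type B band of $\sigma_\ell$; and the resulting lower bounds $a_{\ell+1}\geq b_\ell$, $b_{\ell+2}\geq a_\ell+2b_\ell$ already sum, via the Fibonacci recursion, to the total counts $F_{\ell+1}$ and $F_{\ell+2}$, forcing equality everywhere (this is also how one sees the two type B sub-bands are disjoint from the type A sub-band, which is what ``type B'' actually requires --- containment in $\sigma_\ell$ \emph{and} disjointness from $\sigma_{\ell+1}$, not membership in $\sigma_\ell\setminus\sigma_{\ell-1}$ as you wrote). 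Without this counting step, or a genuinely carried-out substitute for it, the proof is not complete; with it, your local estimates do assemble into a correct proof.
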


Let $I_{k}^{B}$ be a type B band in $\sigma_{k}$. Using Lemma \ref{lem-fibonacci_spec_building}
one can construct, for $m>k$ a class $S_{k,m}^{B}$ of bands, belonging to $\sigma_{m}$, 
which are contained in $I_{k}^{B}$, i.e., if $I_{m}\subseteq\sigma_{m}$ and $I_{m}\in S_{k,m}^{B}$ then
$I_{m}\subseteq I_{k}^{B}$ . The same can be done for a type A band
$I_{k}^{A}\subseteq\sigma_{k}$, i.e., one can construct, by a repeated
use of Lemma \ref{lem-fibonacci_spec_building}, a class $S_{k,m}^{A}$
of bands in $\sigma_{m}$ such that if $I_{m}\in S_{k,m}^{A}$ then
$I_{m}\subseteq I_{k}^{A}$ (note that by Lemma \ref{lem-fibonacci_spec_building}
for $m=k+1$ we have $S_{k,k+1}^{A}=\emptyset$ ). 

Our analysis proceeds through the following

\begin{lemma} \label{lem-a_b_interval_chain}
Let $I_{k}^{B}\subseteq\sigma_{k}$ be a type B band. Then for $m\geq k\geq1$
we have $\#S_{k,m}^{B}=F_{m-k}.$ Let $I_{k}^{A}\subseteq\sigma_{k}$
be a type A band. Then for $k\geq0$ and $m\geq k+2$ we have $\#S_{k,m}^{A}=\#S_{k+2,m}^{B}=F_{m-k-2}$.
\end{lemma}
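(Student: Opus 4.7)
The plan is to prove both identities by a joint induction on $n = m - k$, reading off a Fibonacci-type recursion directly from the construction of $S_{k,m}^{A}$ and $S_{k,m}^{B}$ via iterated application of Lemma \ref{lem-fibonacci_spec_building}.

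First, I would dispose of the type A case by reduction to the type B case. Part (1) of Lemma \ref{lem-fibonacci_spec_building} says that the only descendant of a type A band $I_{k}^{A}$ in $\sigma_{k+1}\cup\sigma_{k+2}$ is a single type B band $I_{k+2}^{B}\subseteq I_{k}^{A}$. Hence for $m\geq k+2$ the iterative construction of $S_{k,m}^{A}$ at the very first step produces only $I_{k+2}^{B}$, and from then on the iteration coincides with the one producing $S_{k+2,m}^{B}$. This gives the set identity $S_{k,m}^{A}=S_{k+2,m}^{B}$ and in particular $\#S_{k,m}^{A}=\#S_{k+2,m}^{B}$; once the formula for type B is established, this yields the claimed $F_{m-k-2}$.

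Second, I would extract the recursion for type B. Part (2) of the lemma says that a type B band $I_{k}^{B}$ contains exactly one type A band in $\sigma_{k+1}$ and two type B bands in $\sigma_{k+2}$, and that these are all the bands of $\sigma_{k+1}\cup\sigma_{k+2}$ inside $I_{k}^{B}$. The iterative construction of $S_{k,m}^{B}$ therefore splits as a disjoint union of the sub-constructions started from these three children, giving
\[
\#S_{k,m}^{B} \;=\; \#S_{k+1,m}^{A} \,+\, 2\,\#S_{k+2,m}^{B}
\]
for $m\geq k+2$. Whenever $m\geq k+3$ we may use the type A reduction to replace $\#S_{k+1,m}^{A}$ by $\#S_{k+3,m}^{B}$.

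Third, I would run induction on $n=m-k$. The base cases are direct from the lemma: $\#S_{k,k}^{B}=1=F_{0}$, $\#S_{k,k+1}^{B}=1=F_{1}$ (the unique type A child), and $\#S_{k,k+2}^{B}=2=F_{2}$ (the two type B children, the type A child contributing nothing at level $k+2$ by part (1)). For $n\geq 3$, the recursion together with the inductive hypothesis and the type A reduction gives
\[
\#S_{k,m}^{B} \;=\; 2F_{n-2}+F_{n-3} \;=\; F_{n-2}+(F_{n-2}+F_{n-3}) \;=\; F_{n-2}+F_{n-1} \;=\; F_{n},
\]
using $F_{n-1}=F_{n-2}+F_{n-3}$ and $F_{n}=F_{n-1}+F_{n-2}$, which closes the induction.

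There is no genuine obstacle here; the main bookkeeping point is being careful with the small-$n$ cases ($n=2$ for type B and $n=2$ for type A) where the recursion cannot yet be fed back into itself and one has to count bands in $\sigma_m$ inside $I_{k}^{\bullet}$ directly from the two parts of Lemma \ref{lem-fibonacci_spec_building}.
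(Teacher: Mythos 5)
Your proof is correct and follows essentially the same route as the paper's: the identity $S_{k,m}^{A}=S_{k+2,m}^{B}$ from part (1) of Lemma \ref{lem-fibonacci_spec_building}, the recursion $\#S_{k,m}^{B}=\#S_{k+1,m}^{A}+2\,\#S_{k+2,m}^{B}$ from part (2), the base cases $F_{1}=1$, $F_{2}=2$, and the induction closing via $F_{n-3}+2F_{n-2}=F_{n}$. The only cosmetic difference is that the paper first normalizes by shift-invariance to reduce everything to $\#S_{1,\cdot}^{B}$, which your direct induction on $n=m-k$ makes unnecessary.
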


\begin{proof}
We notice that the procedure for the construction of the classes of
intervals $S_{k,m}^{B}$and $S_{k,m}^{A}$ is such that for fixed $l\in\mathbb{Z}$
with $k+l\geq 1$ we have $\#S_{k+l,m+l}^{B}=\#S_{k,m}^{B}$ and for
$k+l\geq 0$ we have $\#S_{k+l,m+l}^{A}=\#S_{k,m}^{A}$. By Lemma \ref{lem-fibonacci_spec_building}
we also have $\#S_{k,m}^{A}=\#S_{k+2,m}^{B}$ for $m\geq k+2$. Therefore,
$\#S_{k,m}^{B}=\#S_{1,m-k+1}^{B}$ and $\#S_{k,m}^{A}=\#S_{0,m-k}^{A}=\#S_{2,m-k}^{B}=\#S_{1,m-k-1}^{B}$.
The proof of Lemma \ref{lem-a_b_interval_chain} proceeds by induction.
Note first that $\#S_{m,m+1}^{B}=1=F_{1}$and $\#S_{m,m+2}^{B}=2=F_{2}$.
Assume that we know that $\#S_{m,m+l}^{B}=F_{l}$ for $l=0,1,\ldots,k\geq2$
and consider $\#S_{m,m+k+1}^{B}$. Let $I_{m}^{B}\subseteq\sigma_{m}$
be a type B band in $\sigma_{m}$. By Lemma \ref{lem-fibonacci_spec_building}
there is one type A band $I_{m+1}^{A}\subseteq\sigma_{m+1}$ with $I_{m+1}^{A}\subseteq I_{m}^{B}$
and two type B bands $I_{m+2,\, j}^{B}\subseteq\sigma_{m+2}$, $j=1,2$
with $I_{m+2,\, j}^{B}\subseteq I_{m}^{B}$. Therefore, for $k\geq1$
we have
\beq \no
\begin{split}
\#S_{m,m+k+1}^{B}&=\#S_{m+1,m+k+1}^{A}+2\#S_{m+2,m+k+1}^{B}=\#S_{m,m+k}^{A}+2\#S_{m,m+k-1}^{B}=\\
&=\#S_{m+2,m+k}^{B}+2\#S_{m,m+k-1}^{B}=\#S_{m,m+k-2}^{B}+2\#S_{m,m+k-1}^{B}= \\
&=F_{k-2}+2F_{k-1}=F_{k+1}.
\end{split}
\eeq
\end{proof}

We have obtained the following picture. The set $\sigma_m$ is made up of $F_m$ disjoint 
bands---$I_m^1,I_m^2,\ldots,I_m^{F_m}$. These bands are all disjoint to the type B bands of 
$\sigma_{m+1}$---$I_{m+1}^{B,1},I_{m+1}^{B,2},\ldots,I_{m+2}^{B,l(m)}$ while the type A bands of $\sigma_{m+1}$ 
are all contained in $\sigma_m$. Thus, by Lemma \ref{lem-a_b_interval_chain} the family 
\beq \no
\ti{U}_m \equiv=\{I_m^1,I_m^2,\ldots,I_m^{F_m},I_{m+1}^{B,1},I_{m+1}^{B,2},\ldots,I_{m+2}^{B,l(m)}\}
\eeq
is a cover for $\sigma_k$ for all $k \geq m$. Since $\mathfrak{E}_{q_\ell} \subseteq \sigma_\ell$ (more precisely, 
each element of $\mathfrak{E}_{q_\ell}$ is contained in a unique band of $\sigma_\ell$), we can take 
$U_\ell=\ti{U}_{m(\ell)}$ for a function $m(\ell) \leq \ell$ to be defined later.

We need two additional preliminary results.

\begin{lemma}[Proposition 5.2 in \cite{KKL}] \label{lem-KKL_a}
Assume that $\lambda>8$ and $k\geq 3$. Then, for every $E\in\sigma_{k}$ we have 
\beq \label{eq-5.1}
\left \vert \left( \calD^{q_k} \right)'(E) \right\vert\geq \zeta(\lambda)^{\frac{k}{2}},
\eeq
where $\zeta(\lambda)=\frac{\lambda-4+\sqrt{(\lambda-4)^2-12}}{2}$.  
\end{lemma}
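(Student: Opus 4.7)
The plan is to deploy the Sütő trace map. Writing $M_k(E)$ for the transfer matrix at energy $E$ for the $q_k$-periodic approximant of the Fibonacci potential, set $x_k(E):=\tfrac{1}{2}\operatorname{tr} M_k(E)$, so that $\calD^{q_k}(E)=2x_k(E)$ and the task becomes a lower bound on $|x_k'(E)|$. The Fibonacci substitution rule combined with the standard $SL(2,\bbR)$ identity $\operatorname{tr}(AB)+\operatorname{tr}(AB^{-1})=\operatorname{tr}(A)\operatorname{tr}(B)$ yields the Sütő recursion $x_{k+1}=2x_{k-1}x_k-x_{k-2}$ and the conserved Fricke--Vogt invariant $x_{k+1}^2+x_k^2+x_{k-1}^2-2x_{k+1}x_kx_{k-1}=1+\lambda^2/4$. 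Differentiation in $E$ gives the linear three-term recursion
\begin{equation}
x'_{k+1}=2x_{k-1}(E)\,x'_k+2x_k(E)\,x'_{k-1}-x'_{k-2}.
\end{equation}

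The essential input is a hyperbolicity estimate along the sequence $\{x_j(E)\}_{j\le k}$ for a fixed $E\in\sigma_k$. By definition $|x_k(E)|\le 1$, so the Fricke--Vogt invariant must be absorbed by the other two traces, and for $\lambda>8$ (which makes $\lambda^2/4$ much larger than any $O(1)$ contribution) one concludes that either $|x_{k-1}(E)|$ or $|x_{k-2}(E)|$ is of order at least $\sqrt{\zeta(\lambda)}$. The case distinction is organized by the type~A/B band classification of Lemma~\ref{lem-fibonacci_spec_building}: a type~A band of $\sigma_k$ lies inside $\sigma_{k-1}$ and is disjoint from $\sigma_{k-2}$, which forces $|x_{k-2}(E)|$ to be large; a type~B band of $\sigma_k$ lies inside $\sigma_{k-2}$ and is disjoint from $\sigma_{k-1}$, which forces $|x_{k-1}(E)|$ to be large. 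Iterating this observation down the Fibonacci hierarchy produces a uniform two-step lower bound $|x_{j-1}(E)\,x_{j-2}(E)|\ge \zeta(\lambda)/C$ for every $3\le j\le k$.

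The proof is completed by induction on $k$ using the derivative recursion. Writing the three-term relation in matrix form
\[
\begin{pmatrix}x'_{k+1}\\x'_k\\x'_{k-1}\end{pmatrix}=\begin{pmatrix}2x_{k-1}(E)&2x_k(E)&-1\\1&0&0\\0&1&0\end{pmatrix}\begin{pmatrix}x'_k\\x'_{k-1}\\x'_{k-2}\end{pmatrix},
\]
the hyperbolicity estimate above forces the two-step product of these matrices to have dominant eigenvalue at least $\zeta(\lambda)$, producing a multiplicative gain of $\sqrt{\zeta(\lambda)}$ per single index step. After verifying the base case $k=3$ directly from the explicit cubic expression for $\calD^{q_3}$, this yields $|x_k'(E)|\ge \text{const}\cdot\zeta(\lambda)^{k/2}$, and the constant is absorbed into the bound either by tuning the base case or by noting that $\zeta(\lambda)\ge 2$ for $\lambda\ge 8$. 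The main obstacle is the non-uniformity of the one-step expansion: when $E$ sits in a type~A band of $\sigma_k$ the coefficient $2x_{k-1}(E)$ can be as small as $O(1)$, so the single-step matrix is not hyperbolic and expansion must be extracted over a two-step window from $x_{k-2}$. Tracking the pair $(x'_k,x'_{k-1})$ jointly rather than $x'_k$ alone, together with the band-alternation pattern of Lemma~\ref{lem-fibonacci_spec_building} guaranteeing $E\notin\sigma_{k-1}\cap\sigma_{k-2}$, is precisely what makes the two-step product uniformly expanding on $\sigma_k$ and closes the induction.
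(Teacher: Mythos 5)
A preliminary remark: the paper does not prove this lemma — it is imported verbatim as Proposition 5.2 of \cite{KKL} — so the only proof to compare against is the one in that reference. Your framework (the trace map $x_{k+1}=2x_kx_{k-1}-x_{k-2}$ with $\calD^{q_k}=2x_k$, the Fricke--Vogt invariant, the differentiated recursion, and the type~A/type~B band dichotomy) is exactly the framework of the cited proof, so the approach is the right one.

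However, there are genuine gaps at the decisive step. First, the asserted uniform bound $|x_{j-1}(E)\,x_{j-2}(E)|\ge\zeta(\lambda)/C$ is false. A type~A band of $\sigma_j$ lies in a type~B band of $\sigma_{j-1}$, where $|x_{j-1}|\le1$, $|x_{j-3}|\le1$ and the invariant forces $|x_{j-2}|\ge\lambda/2-1$; the recursion $2x_{j-1}x_{j-2}=x_j+x_{j-3}$ then gives $|x_{j-1}x_{j-2}|\le\tfrac12\left(|x_j|+|x_{j-3}|\right)\le1$ on that band, and at the interior point where $x_{j-1}=0$ (which does lie in the type~A band, since there $|x_j|=|x_{j-3}|\le1$) the product vanishes. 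Second, even a correct statement that the two-step coefficient product has dominant eigenvalue $\ge\zeta(\lambda)$ would not bound $|x_k'|$ from below: a cocycle with large top eigenvalues can still contract the particular vector $(x_k',x_{k-1}',x_{k-2}')$ unless one shows this vector remains in an invariant expanding cone. That cone condition is exactly the missing content: the actual induction (as in \cite{KKL}) carries, alongside the growth $|x_k'|\ge\tfrac12\zeta^{k/2}$, comparison inequalities among $|x_k'|,|x_{k-1}'|,|x_{k-2}'|$ together with sign information ensuring that the dominant term $2x_{k-1}x_k'$ (type~A case, where $|x_k|$ is small and $|x_{k-1}|$ large is reversed appropriately) or $2x_kx_{k-1}'$ (type~B case) is not cancelled by the other two terms of $x_{k+1}'=2x_{k-1}x_k'+2x_kx_{k-1}'-x_{k-2}'$. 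Your sketch acknowledges this obstacle but then asserts, rather than proves, that the two-step product is ``uniformly expanding,'' and it never exhibits where the specific constant $\zeta(\lambda)$ — the larger root of $\zeta+3/\zeta=\lambda-4$, which emerges precisely from this recursive inequality — comes from. As written, this is a correct outline of the known argument with its crucial lemma unproved and one of its stated intermediate estimates false.
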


\begin{lemma}[Equation 57 in \cite{DT}] \label{lem-KKL_b}
If $\lambda>4$ and $k \geq 1$. Then for every $E\in\sigma_k$ we have 
\beq \label{eq-5.2}
\left \vert \left( \calD^{q_k}\right)'(E) \right \vert\leq C(2\lambda+22)^k
\eeq
where $C$ is some positive constant. 
\end{lemma}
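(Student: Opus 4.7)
The strategy is to identify $\calD^{q_k}(E)$ with a trace of transfer matrices, derive the Fibonacci trace map, differentiate, and iterate the resulting linear recursion. Let $M_k(E)$ denote the transfer matrix of $H_k^{\mathrm{per}}$ over one period $q_k = F_k$, so that $\calD^{q_k}(E) = \mathrm{tr}(M_k(E)) = 2 x_k(E)$ with $x_k(E) = \tfrac12 \mathrm{tr}(M_k(E))$. The Fibonacci substitution structure gives the multiplicative relation $M_{k+1}(E) = M_{k-1}(E) M_k(E)$; taking traces and using the $SL(2,\mathbb{R})$ identity $\mathrm{tr}(AB)+\mathrm{tr}(AB^{-1})=\mathrm{tr}(A)\mathrm{tr}(B)$ together with $M_{k-1} M_k^{-1} = M_{k-2}^{-1}$ yields the trace map
\beq \no x_{k+1} = 2 x_{k-1} x_k - x_{k-2}. \eeq
Differentiating in $E$ gives the linear three-step recursion
\beq \no x'_{k+1} = 2 x'_{k-1} x_k + 2 x_{k-1} x'_k - x'_{k-2}. \eeq

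To apply this recursion inductively, I need uniform bounds on $|x_j(E)|$ for $j \le k-1$ and $E \in \sigma_k$. Two tools combine for this. First, Lemma \ref{lem-fibonacci_spec_building} together with its iterated application at every scale: for $E \in \sigma_k$, the unique band of $\sigma_k$ containing $E$ is type A (so $E \in \sigma_{k-1}$) or type B (so $E \in \sigma_{k-2}$), and this can be continued downward, producing a sequence of indices $k = j_0 > j_1 > \cdots > j_r$ with $j_{i+1} \in \{j_i - 1, j_i - 2\}$ and $|x_{j_i}(E)| \le 1$. Second, the Fricke-type invariant
\beq \no x_j^2 + x_{j-1}^2 + x_{j-2}^2 - 2 x_j x_{j-1} x_{j-2} = 1 + \lambda^2/4, \eeq
valid for all $E$ and all $j$, which, given two consecutive small values $|x_j|, |x_{j-1}| \le 1$, bounds the third value $|x_{j-2}|$ by $\lambda/2 + O(1)$. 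Together these show $|x_j(E)| \le \lambda/2 + C_0$ uniformly for $j = 0, 1, \ldots, k$ when $E \in \sigma_k$.

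With this bound, the differentiated trace map reduces to an effective linear recursion $d_{j+1} \le 2B\, d_j + 2B\, d_{j-1} + d_{j-2}$ for $d_j := |x'_j(E)|$, where $B = \lambda/2 + C_0$. The base values $|x'_0|, |x'_1|$ are explicit half-integers. The dominant root of the characteristic polynomial $z^3 - 2Bz^2 - 2Bz - 1$ is at most $2B + O(1)$, which after choosing $C_0$ large enough to absorb the lower-order terms is bounded by $2\lambda + 22$. Iterating yields $d_k \le C (2\lambda + 22)^k$, and the claimed bound follows since $(\calD^{q_k})'(E) = 2 x'_k(E)$.

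The main obstacle is the downward propagation of the bound $|x_j(E)| = O(\lambda)$ along the entire chain $j = 0, 1, \ldots, k$: while the band structure and invariant give clean local control near $j = k$, the iteration of the trace map from bounded initial data $|x_0|, |x_1| = O(\lambda)$ can in principle produce $|x_j|$ growing like $\lambda^{F_j}$. It is only the combination of the invariant with the nested band structure at \emph{every} scale that forces the effective constant $B$ in the recursion to remain $O(\lambda)$, and this delicate bookkeeping is the technical content of Equation (57) in \cite{DT}.
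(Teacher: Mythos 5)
The paper offers no proof of this lemma at all---it is imported verbatim as Equation (57) of \cite{DT}---so the only question is whether your blind argument is sound, and it is: trace map, differentiation, and a uniform $O(\lambda)$ bound on $|x_j(E)|$ for $j\le k$, $E\in\sigma_k$, is exactly the mechanism behind the cited estimate. Moreover, your closing paragraph undersells your own middle paragraph: the chain-plus-invariant step there is already a complete argument, not a pointer to ``delicate bookkeeping'' left to \cite{DT}. For $E\in\sigma_k$, iterating Lemma \ref{lem-fibonacci_spec_building} (by induction from the base case, every band of $\sigma_j$ is of type A or type B, hence contained in $\sigma_{j-1}$ or $\sigma_{j-2}$) produces indices $k=j_0>j_1>\cdots$ descending to the bottom scales with gaps $1$ or $2$ and $|x_{j_i}(E)|\le 1$; any skipped index $j$ has both neighbors $j-1$ and $j+1$ in the chain, and the invariant applied to the consecutive triple $(j-1,j,j+1)$ gives $|x_j(E)|\le |x_{j-1}x_{j+1}|+\sqrt{(1-x_{j-1}^2)(1-x_{j+1}^2)+\lambda^2/4}\le 2+\lambda/2$. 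Since the bound at every intermediate scale comes from this containment structure, and never from running the trace map upward from $j=0,1$, the doubly exponential $\lambda^{F_j}$ scenario you worry about simply does not arise; there is no genuine gap. Quantitatively, with $B=\lambda/2+2$ the differentiated trace map gives $d_{j+1}\le 2Bd_j+2Bd_{j-1}+d_{j-2}\le(2\lambda+9)\max\{d_j,d_{j-1},d_{j-2}\}$, and with $d_{-1}=0$, $d_0=d_1=1/2$ this already yields $\left|\left(\calD^{q_k}\right)'(E)\right|=2|x_k'(E)|\le C(2\lambda+9)^k\le C(2\lambda+22)^k$, so even the appeal to the dominant root of the cubic is unnecessary.
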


\begin{remark*}
It is a straightforward computation to see that Lemma \ref{lem-KKL_a} and Corollary \ref{cor-1.3} imply that 
$\alpha_l^+\leq \frac{6 \log \eta}{\log \zeta(\lambda)}$ for $H_{\rm{F}}^+$. 
The bounds we obtain for $\alpha_u^+$ are better so we do not elaborate on this point here.
\end{remark*}

We can now prove
\begin{proposition} \label{nice-scale-fib}
Let $\mu'(\lambda)=\frac{2 \log \left( 2\lambda +22\right)}{\log \zeta(\lambda)}$ and 
$\omega(\lambda)=\frac{2 \log \eta}{\log \zeta (\lambda)}$ $($recall $\eta=\frac{\sqrt{5}+1}{2})$. 
Then, for any $\mu(\lambda)>\mu'(\lambda)$, $\{\ti{U}_m\}_{m=1}^\infty$ scales nicely with exponents 
$\mu(\lambda)$ and $\omega(\lambda)$.
\end{proposition}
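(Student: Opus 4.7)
The plan is to use the derivative bounds of Lemmas \ref{lem-KKL_a} and \ref{lem-KKL_b} to fix a length scale for the bands and then use the recursive structure of Lemmas \ref{lem-fibonacci_spec_building} and \ref{lem-a_b_interval_chain} to verify the two clauses in Definition \ref{def-1.5}. On any band $B \subseteq \sigma_k$ with $k \geq 3$, the discriminant $\calD^{q_k}$ varies monotonically between $\pm 2$, so $4 \leq |B| \cdot \sup_B |(\calD^{q_k})'|$ and $4 \geq |B| \cdot \inf_B |(\calD^{q_k})'|$; combined with Lemmas \ref{lem-KKL_a} and \ref{lem-KKL_b} this gives $c(2\lambda+22)^{-k} \leq |B| \leq 4\zeta(\lambda)^{-k/2}$. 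I would set $\varepsilon_m := 4\zeta(\lambda)^{-m/2}$, so that every $I \in \ti{U}_m$ (a band of $\sigma_m$ or a $B$-band of $\sigma_{m+1}$) satisfies $|I| \leq \varepsilon_m$ and $|I| \geq c(2\lambda+22)^{-(m+1)}$; a direct comparison of exponents shows the latter exceeds $C_1 \varepsilon_m^\mu$ for any $\mu > \mu'(\lambda)$. For arbitrary $\varepsilon \in (0,1)$, let $m(\varepsilon)$ be the unique integer with $\varepsilon_{m(\varepsilon)} \leq \varepsilon < \varepsilon_{m(\varepsilon)-1}$ and set $U_\varepsilon := \ti{U}_{m(\varepsilon)}$; since $\varepsilon/\varepsilon_{m(\varepsilon)} \in [1,\sqrt{\zeta(\lambda)}\,)$, both length bounds transfer to $U_\varepsilon$ after adjusting constants, and $U_{\varepsilon_\ell} = \ti{U}_\ell$ by construction.

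For the nesting property (i) of Definition \ref{def-1.5}, suppose $\varepsilon_1 > \varepsilon_2$ with $m_i = m(\varepsilon_i)$ and $m_2 > m_1$. Iterating $\sigma_\ell \subseteq \sigma_{\ell-1} \cup \sigma_{\ell-2}$ gives $\sigma_{m_2} \cup \sigma_{m_2+1} \subseteq \sigma_{m_1} \cup \sigma_{m_1+1}$; any $J \in \ti{U}_{m_2}$ is a subinterval of this union and therefore lies in a connected component of $\sigma_{m_1} \cup \sigma_{m_1+1}$, which (using that type $A$ bands of $\sigma_{m_1+1}$ lie inside $\sigma_{m_1}$ while type $B$ bands are disjoint from $\sigma_{m_1}$) is either a band of $\sigma_{m_1}$ or a $B$-band of $\sigma_{m_1+1}$---both in $\ti{U}_{m_1}$. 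For the counting property (ii), fix $I \in \ti{U}_{m_1}$ and let $a_n, b_n$ count the type $A$ and type $B$ bands of $\sigma_{m_1+n}$ contained in $I$. Lemma \ref{lem-fibonacci_spec_building} gives $a_{n+1} = b_n$, and Lemma \ref{lem-a_b_interval_chain} yields $a_n + b_n \leq F_n$ in all three cases for $I$ (type $A$ or type $B$ band of $\sigma_{m_1}$, or $B$-band of $\sigma_{m_1+1}$), so in particular $b_n \leq F_n$. The elements of $\ti{U}_{m_2}$ inside $I$ (bands of $\sigma_{m_2}$ plus $B$-bands of $\sigma_{m_2+1}$) then total at most $F_{m_2-m_1} + b_{m_2-m_1+1} \leq F_{m_2-m_1+2} \leq C\eta^{m_2-m_1}$, while $\varepsilon_1/\varepsilon_2 \geq \varepsilon_{m_1}/\varepsilon_{m_2-1} = \zeta(\lambda)^{(m_2-m_1-1)/2}$. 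With the choice $\omega = \omega(\lambda) = 2\log\eta/\log\zeta(\lambda)$ this yields $(\varepsilon_1/\varepsilon_2)^\omega \geq \eta^{m_2-m_1-1}$, so (ii) holds with $C_3 = C\eta$.

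The principal obstacle is the combinatorial bookkeeping for $(a_n, b_n)$: the three possible initial conditions (from the three types of interval $I$) produce three shifted versions of the same Fibonacci-type recursion, and each must be checked to yield the uniform bound $a_n + b_n \leq F_n$. A minor secondary point is tracking the $\sqrt{\zeta(\lambda)}$ factor that arises in the passage from the discrete scales $\varepsilon_m$ to continuous $\varepsilon$, which is exactly what forces the strict inequality $\mu > \mu'(\lambda)$ in the statement rather than equality.
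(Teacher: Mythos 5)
Your proof is correct and follows essentially the same route as the paper's: the band-length bounds come from Lemmas \ref{lem-KKL_a} and \ref{lem-KKL_b} (you derive them by the mean value theorem across a band, where the paper invokes Lemma \ref{derivative-bound} --- the same estimate), and property (ii) comes from the Fibonacci count of Lemma \ref{lem-a_b_interval_chain} together with the identity $\zeta(\lambda)^{\omega/2}=\eta$. The only divergence is the interpolating family: you set $U_\varepsilon=\ti{U}_{m(\varepsilon)}$ and absorb the bounded ratio $\varepsilon/\varepsilon_{m(\varepsilon)}$ into the constants, whereas the paper enlarges the intervals of $\ti{U}_{m+1}$ within their parent intervals of $\ti{U}_m$ (allowing overlaps) to meet the lower length bound $C\varepsilon^{\mu}$; both constructions are valid, and your count $F_{m_2-m_1}+b_{m_2-m_1+1}\leq F_{m_2-m_1+2}$ is in fact slightly more careful than the paper's stated $F_{k+1-m}$, the discrepancy being only a bounded factor that is irrelevant to the conclusion.
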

\begin{proof}
Fix $\nu >0$. Let $\mu(\lambda)= \mu'(\lambda)+\nu$. 
Now note that by the left hand side of \eqref{derivative-up-low-bound}, for any $I \in \ti{U}_m$ that satisfies 
$I \subseteq \sigma_m$
\beq \no
\left|\frac{4e}{(\calD^{q_m})'(\wti{E}_{q_m,I})}\right| \geq \left|I \right|
\eeq
where $\wti{E}_{q_m,I}$ is the unique zero of $\calD^{q_m}$ in $I$. In the same way, if $I \subseteq \ti{U}_m$ satisfying 
$I \subseteq \sigma_{m+1}$
\beq \no
\left|\frac{4e}{(\calD^{q_{m+1}})'(\wti{E}_{q_{m+1},I})}\right| \geq \left|I \right|.
\eeq
Thus, by Lemma \ref{lem-KKL_a}, it follows that for any $I \in \ti{U}_m$ 
\beq \label{eq-5.3}
\left| I \right| \leq 4e \zeta(\lambda)^{-m/2}\equiv \varepsilon_m.
\eeq

On the other hand, by the right hand side of \eqref{derivative-up-low-bound}, for $I \in \ti{U}_m$ satisfying 
$I \subseteq \sigma_m$, we have
\beq \no
\left|
I  \right| \geq \frac{\sqrt{5}+1}{\left|(\calD^{q_m})'(\wti{E}_{q_m,I}) \right|}
\eeq
and for $I \in \ti{U}_m$ satisfying 
$I \subseteq \sigma_{m+1}$ we get
\beq \no
\left|
I  \right| \geq \frac{\sqrt{5}+1}{\left|(\calD^{q_{m+1}})'(\wti{E}_{q_{m+1},I})\right|}.
\eeq
This, by Lemma \ref{lem-KKL_b}, implies that for all $I \in \ti{U}_m$
\beq \no
\left | I \right| \geq C (2\lambda +22)^{-(m+1)}= C (\varepsilon_m)^{\mu'(\lambda)+O\left(\frac{1}{m}\right)}
\geq \ti{C} \varepsilon_m^{\mu(\lambda)},
\eeq
for some constants $C>0$, $\ti{C}>0$. 

By Lemma \ref{lem-a_b_interval_chain}, for any $k>m$, any $I \in \ti{U}_m$ contains at most 
$F_{k+1-m}\leq C_\eta \eta^{k+1-m}$ elements of $\ti{U}_{k}$. Note also that
\beq \no
\eta^{k+1-m}=\eta\left( \zeta(\lambda)^{\frac{k-m}{2}}\right)^{\omega}=
\eta \left(\frac{\varepsilon_m}{\varepsilon_k}\right)^\omega.
\eeq

To sum up our present findings, we've shown that any element, $I$, of $\ti{U}_m$ satisfies
\beq \no
\varepsilon_m \geq \left| I \right| \geq \ti{C} \varepsilon_m^{\mu},
\eeq
for some constant $\ti{C}>0$, 
and that for any $k \geq m$ any element of $\ti{U}_k$ is contained in some element of $\ti{U}_m$ in such a way 
that there are no more than $C_\eta \eta \left(\frac{\varepsilon_m}{\varepsilon_k}\right)^\omega$ elements of 
$\ti{U}_k$ in any element of $\ti{U}_m$. 

To conclude the 
proof we only need to show that we may extend the sequence $\{\ti{U}_m\}_{m=1}^\infty$ of interval sets so that for 
any $\varepsilon$ we have a set $\ti{U}_\varepsilon$ of intervals in such a way that the family 
$\{ \ti{U}_\varepsilon\}_{0<\varepsilon<\varepsilon_1}$ satisfies these properties as well 
(perhaps with $C_\eta$ replaced by a different constant). 

But this is straightforward: Let $\varepsilon_m>\varepsilon>\varepsilon_{m+1}$ for some $m$. Now consider the elements of 
$\ti{U}_{m+1}$. Since each one is contained in an element of $\ti{U}_m$ which has 
length $\geq \ti{C} \varepsilon_m^{\mu(\lambda)}$, they may all be extended so that they are still inside the corresponding 
interval of $\ti{U}_m$ and their length is between $\varepsilon$ and $\ti{C}\varepsilon^\mu$ (they may intersect each other).
We take these extended intervals as the elements of $\ti{U}_\varepsilon$. Now it is easy to check that the family 
$\{ \ti{U}_\varepsilon\}_{0<\varepsilon<\varepsilon_1}$ satisfies the required properties, and this finishes the proof.
\end{proof}

\begin{proposition}\label{unif-cluster-fib}
Fix $\lambda>8$ $($note that $\omega(\lambda)\equiv \frac{2\log \eta}{\log \zeta(\lambda)}<1$  $)$ and choose $t$ 
so that $\omega(\lambda)<t<1$. Let $m(\ell)=[t \cdot \ell]$ and choose 
$\mu(\lambda)>\frac{2 \log \left( 2\lambda +22\right)}{\log \zeta(\lambda)}$.
Then there exists $L>0$ such that the sequence $\{\mathfrak{E}_{q_\ell}\}_{\ell=L}^\infty$ is uniformly 
clustered by $\ti{U}_{m(\ell)}$ with exponents $\{\alpha_\ell, \xi_\ell, \mu(\lambda)\}$, 
where $\alpha_\ell \equiv \frac{-\log (q_\ell)}{\log(\varepsilon_{m(\ell)})}$ $($with $\varepsilon_m$ as defined in 
\eqref{eq-5.3}$)$ and $\xi_\ell \equiv \frac{\log F_{\ell-m(\ell)-2}}{\log F_\ell}$, $($recall $F_m$ is the $m$'th 
Fibonacci number).
\end{proposition}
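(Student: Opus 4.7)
The plan is to verify, in turn, the four conditions of Definition \ref{def-1.4} for the cover $\ti{U}_{m(\ell)}$, exploiting the identity $q_\ell^{-1/\alpha_\ell} = \varepsilon_{m(\ell)}$ which is immediate from the definition $\alpha_\ell = -\log q_\ell/\log \varepsilon_{m(\ell)}$. I begin by noting that $\ti{U}_{m(\ell)}$ consists of \emph{disjoint} intervals: its members are either distinct bands of $\sigma_{m(\ell)}$ or type B bands of $\sigma_{m(\ell)+1}$, and Lemma \ref{lem-fibonacci_spec_building} places the latter outside $\sigma_{m(\ell)}$ (and they are pairwise disjoint as distinct bands of $\sigma_{m(\ell)+1}$). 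Moreover $\mathfrak{E}_{q_\ell}\subseteq\sigma_\ell$, each band of $\sigma_\ell$ is contained in a unique member of $\ti{U}_{m(\ell)}$, and each band of $\sigma_\ell$ contains exactly one zero of $\calD^{q_\ell}$, so counting points of $\mathfrak{E}_{q_\ell}$ in $I\in \ti{U}_{m(\ell)}$ reduces to counting bands of $\sigma_\ell$ contained in $I$.

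Applying Lemma \ref{lem-a_b_interval_chain} then gives these counts: $F_{\ell-m(\ell)-2}$ for a type A band of $\sigma_{m(\ell)}$, $F_{\ell-m(\ell)}$ for a type B band of $\sigma_{m(\ell)}$, and $F_{\ell-m(\ell)-1}$ for a type B band of $\sigma_{m(\ell)+1}$. The minimum, $F_{\ell-m(\ell)-2}$, equals $q_\ell^{\xi_\ell}$ by the definition of $\xi_\ell$ and establishes $(\varepsilon_{m(\ell)},\xi_\ell)$-clustering in the sense of Definition \ref{clustering-definition}. The maximum, $F_{\ell-m(\ell)}$, gives $\bar\xi_\ell = \log F_{\ell-m(\ell)}/\log F_\ell$, and since $F_n/F_{n-2} \to \eta^2$ one has $\log F_{\ell-m(\ell)}-\log F_{\ell-m(\ell)-2} = 2\log\eta + o(1)$ while $\log q_\ell = \Theta(\ell)$; this delivers condition (iv).

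For (iii), the Binet asymptotics yield $\alpha_\ell \to 2\log\eta/(t\log\zeta(\lambda)) = \omega(\lambda)/t$, which lies strictly in $(0,1)$ by the choice $\omega(\lambda)<t<1$, and $\xi_\ell \to 1-t \in (0,1)$; both sequences are therefore uniformly bounded away from the endpoints of their allowed ranges for all sufficiently large $\ell$. Condition (ii) is essentially a transcription of part of the proof of Proposition \ref{nice-scale-fib}: combining the right-hand side of \eqref{derivative-up-low-bound} with Lemma \ref{lem-KKL_b} gives $|I|\geq \ti{C}(2\lambda+22)^{-(m(\ell)+1)}$ for every $I\in\ti{U}_{m(\ell)}$, and the choice $\mu(\lambda) > 2\log(2\lambda+22)/\log\zeta(\lambda)$ upgrades this to $|I|\geq C_1 \varepsilon_{m(\ell)}^{\mu(\lambda)} = C_1 q_\ell^{-\mu(\lambda)/\alpha_\ell}$ once $\ell$ is large enough.

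The only delicate item is (i). Since $m(\ell) = [t\ell]$ with $t<1$ is not injective, the scale sequence $\{\varepsilon_{m(\ell)}\}$ is only non-strictly decreasing. I would handle this by passing to the subsequence $\{\ell_k\}$ along which $m(\ell_k) < m(\ell_{k+1})$; this restores strict monotonicity without altering the asymptotic values of $\alpha_\ell$ or $\xi_\ell$ or the covers used at each scale, and therefore does not affect the subsequent application in the proof of Theorem \ref{fib}. This reindexing is the only real subtlety in the argument; everything else is a direct computation from Lemmas \ref{lem-fibonacci_spec_building}, \ref{lem-a_b_interval_chain}, \ref{lem-KKL_a}, \ref{lem-KKL_b} and the already-established Proposition \ref{nice-scale-fib}.
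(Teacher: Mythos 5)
Your proof is correct and follows essentially the same route as the paper's: one verifies the four conditions of Definition \ref{def-1.4} directly from Lemma \ref{lem-a_b_interval_chain}, Lemmas \ref{lem-KKL_a}--\ref{lem-KKL_b} and Proposition \ref{nice-scale-fib}, with the limits $\xi_\ell \to 1-t$ and $\alpha_\ell \to \omega(\lambda)/t$ disposing of condition (iii). Your observation that condition (i) holds only non-strictly because $m(\ell)=[t\cdot\ell]$ is not injective is a genuine (minor) point the paper glosses over, and your subsequence fix is sound, since the gaps between consecutive retained indices are bounded and the resulting subsequence of $\{q_\ell\}$ therefore remains exponentially growing.
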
 
\begin{proof}
Clearly, $q_\ell^{-1/\alpha_\ell}=\varepsilon_{m(\ell)}$ and $q_\ell^{\xi_\ell}=F_{\ell-m(\ell)-2}$ which, by Proposition 
\ref{nice-scale-fib} and Lemma \ref{lem-a_b_interval_chain}, say that $\mathfrak{E}_{q_\ell}$ is indeed 
$\{q_\ell^{-1/\alpha_\ell}, \xi_\ell\}$ clustered by $\ti{U}_{m(\ell)}$. 

Now, properties (i) and (ii) of Definition \ref{def-1.4} are obvious from Proposition \ref{nice-scale-fib} and so is 
property (iv) by Lemma \ref{lem-a_b_interval_chain}. We only have to check that $\delta<\xi_\ell<1-\delta$ and that 
$\delta<\alpha_\ell<1$ for some $\delta>0$. But $\lim_{\ell \rightarrow \infty} \xi_\ell=1-t$ and 
$\lim_{\ell \rightarrow \infty} \alpha_\ell= \frac{1}{t}\omega(\lambda)$ which, by the assumptions on $t$, implies 
this is true for $\ell$ sufficiently large.
\end{proof}

\begin{remark*}
For $\lambda$ sufficiently large ($\lambda>30$ suffices) one can choose $t<1/3$ above to show, using 
only Lemmas \ref{lem-a_b_interval_chain} and \ref{lem-KKL_a} together with Corollary \ref{cor-1.6}, that  
$\alpha_u^+ \leq \frac{6\log \eta}{\log \zeta(\lambda)}$. 
(Note $\lim_{\ell \rightarrow \infty} \xi_\ell=1-t>2/3$).
\end{remark*}

\begin{proof}[Proof of Theorem \ref{fib}]
As remarked above, it is enough to prove the upper bound for $\alpha_u^+$ associated with $H_{\rm{F}}^+$.
We shall show that for any $\delta>0$, $\alpha_u^+ \leq \alpha(\lambda)+\delta$. 

By Proposition \ref{unif-cluster-fib}, as long as $\omega(\lambda)<t<1$ and $m(\ell)=[t \cdot \ell]$, 
$\mathfrak{E}_{q_\ell}$ is uniformly clustered by $\ti{U}_{m(\ell)}$ with exponents $\{\alpha_\ell,\xi_\ell,\mu\}$
as defined above. By Proposition \ref{nice-scale-fib}, $\{\ti{U}_{m(\ell)}\}_{\ell=1}^\infty$ scales nicely with exponents
$\mu(\lambda)$ and $\omega(\lambda)$. Thus, in order to apply Corollary \ref{cor-1.10}, we only need to find $t$ such that 
\eqref{eq-1.5} holds for $\ell$ large enough. 

Since $\lim_{\ell \rightarrow \infty} \xi_\ell=1-t$ and 
$\lim_{\ell \rightarrow \infty} \alpha_\ell= \frac{1}{t}\omega(\lambda)$, this is guaranteed if
\beq \no
2\omega \left(\frac{\mu(\lambda)-1}{\mu(\lambda)-\omega(\lambda)} \right)<(1-t)\frac{1}{t}\omega(\lambda),
\eeq
namely, as long as
\beq \label{eq-5.4}
t<\frac{\mu(\lambda)-\omega(\lambda)}{3\mu(\lambda)-2-\omega(\lambda)}.
\eeq

Recall that $\mu(\lambda)=\mu_\nu(\lambda)=\mu'(\lambda)+\nu$ for some $\nu>0$. Thus, 
as long as 
\beq \no
t<\frac{\mu'(\lambda)-\omega(\lambda)}{3\mu'(\lambda)-2-\omega(\lambda)},
\eeq
inequality \eqref{eq-5.4} is guaranteed for some $\nu>0$ sufficiently small. We get that for such a $t$, the assumptions 
of Corollary \ref{cor-1.10} hold and we obtain $\alpha_u^+ \leq \frac{1}{t}\omega(\lambda)$, which implies that for any 
$\delta>0$,
\beq \no
\alpha_u^+ \leq \frac{3\mu'(\lambda)-2-\omega(\lambda)}{\mu'(\lambda)-\omega(\lambda)}\omega(\lambda)+\delta
\eeq
but elementary manipulations show this is the same as $\alpha_u^+ \leq \alpha(\lambda)+\delta$. 
\end{proof}

\end{document}